\definecolor{mahogany}{cmyk}{0, 0.77, 0.87, 0}
\definecolor{salmon}{cmyk}{0, 0.53, 0.38, 0}
\definecolor{melon}{cmyk}{0, 0.46, 0.50, 0}
\definecolor{yellowgreen}{cmyk}{0.44, 0, 0.74, 0}
\definecolor{brickred}{cmyk}{0, 0.89, 0.94, 0.28}
\definecolor{OliveGreen}{cmyk}{0.64, 0, 0.95, 0.40}
\definecolor{RawSienna}{cmyk}{0, 0.72, 1.0, 0.45}
\definecolor{ZurichRed}{rgb}{1, 0, 0}
\newtheorem{theorem}{Theorem}[section]
\newtheorem{corollary}[theorem]{Corollary}
\newtheorem{proposition}[theorem]{Proposition}
\newtheorem{lemma}[theorem]{Lemma}
\newtheorem{remark}[theorem]{Remark}
\newtheorem{question}[theorem]{Question}
\newtheorem{conjecture}[theorem]{Conjecture}
\theoremstyle{remark}
\numberwithin{equation}{section}
\newcommand{\ext}{\mathrm{ext}}
\newcommand{\dis}{\mathrm{dis}}
\newcommand{\R}{\mathbb{R}}
\newcommand{\Z}{\mathbb{Z}}
\renewcommand{\P}{\mathbb{P}}
\newcommand{\bC}{\mathbb{C}}
\newcommand{\bE}{\mathbb{E}}
\newcommand{\bH}{\mathbb{H}}
\newcommand{\bR}{\mathbb{R}}
\newcommand{\cB}{\mathcal{B}}
\newcommand{\cF}{\mathcal{F}}
\newcommand{\cJ}{\mathcal{J}}
\newcommand{\cK}{\mathcal{K}}
\newcommand{\cP}{\mathcal{P}}
\newcommand{\cR}{\mathcal{R}}
\newcommand{\fM}{\mathfrak{M}}
\newcommand{\wh}{\widehat}
\newcommand{\wt}{\widetilde}
\newcommand{\varep}{\varepsilon}
\newcommand{\im}{\operatorname{Im}}
\newcommand{\ind}{\mathds{1}}
\begin{document}

\title{discrete analogues of second-order Riesz transforms} 

\author{Rodrigo Ba\~nuelos}
\address{
Department of Mathematics \\ 
Purdue University \\ 
150 N.\@ University Street, West Lafayette, IN 47907
}
\email{banuelos@purdue.edu}

\author{Daesung Kim}
\address{
}
\email{daesungkim879@gmail.com}

\thanks{R.~Ba\~nuelos was supported in part by NSF Grant 1403417-DMS}
\subjclass[2010]{Primary 60G44 42A50, 42B20,  Secondary 60J70, 39A12.}
\keywords{discrete singular integrals, martingale transforms, Doob $h$-transforms, sharp inequalities, second-order Riesz transform.}

\begin{abstract}


Discrete analogues of classical operators in harmonic analysis have been widely studied, revealing deep connections with areas such as ergodic theory and analytic number theory. This line of research is commonly known as \emph{Discrete Analogues in Harmonic Analysis (DAHA)}.

In this paper, we study the $\ell^p$ norms of discrete analogues of second-order Riesz transforms. Using probabilistic methods, we construct a new class of second-order discrete Riesz transforms $\mathcal{R}^{(jk)}$ on the lattice $\mathbb{Z}^d$, $d \ge 2$. We show that for $1<p<\infty$, their $\ell^p(\mathbb{Z}^d)$ norms coincide with those of the classical second-order Riesz transforms $R^{(jk)}$ on $L^p(\mathbb{R}^d)$ when $j \neq k$, and are comparable up to dimensional constants when $j = k$.

The operators $\mathcal{R}^{(jk)}$ differ from the discrete analogue $R^{(jk)}_{\mathrm{dis}}$ by convolution with an $\ell^1(\mathbb{Z}^d)$ function. Applications are given to the DAHA of the Beurling--Ahlfors operator. We also show that $\mathcal{R}^{(jk)}$ arise as discrete analogues of certain Calder\'on--Zygmund operators $\mathbf{R}^{(jk)}$, which differ from $R^{(jk)}$ by convolution with an $L^1(\mathbb{R}^d)$ function. Finally, we conjecture that the $L^p$ norms of $\mathcal{R}^{(jk)}$, $R^{(jk)}_{\mathrm{dis}}$, and $\mathbf{R}^{(jk)}$ agree with those of the classical Riesz transforms, known to equal the corresponding martingale transform norms.

\end{abstract}

\maketitle
\tableofcontents 

\section{Introduction and statement of results}

The classical first-order Riesz transforms $R^{(k)}$, $k=1,2,\cdots,d$, are the basic Calder\'on-Zygmund singular integrals on $\R^d$, $d\geq 2$, extending in a natural way the Hilbert transform on $\R$. As singular integrals, they are defined by 
\begin{align*}
    R^{(k)}f(x) =  \int_{\R^d} K^{(k)}(z)f(x-z)\, dz,\qquad
    K^{(k)}(z) = \wt{c}_d\frac{z_k}{|z|^{d+1}},
\end{align*}
where $\wt{c}_d=\pi^{-\tfrac{d+1}{2}}\Gamma(\tfrac{d+1}{2})$ and the integration is in the principal value sense. They are  Fourier multipliers given by $\widehat{{R^{(k)}f}}(\xi)=\frac{i\xi_k}{|\xi|}\widehat{f}(\xi).$ They are also defined  by  
\begin{align}\label{firstRiesz} 
    R^{(k)}f(x)
    =\int_0^{\infty} \frac{\partial P_{y}  f(x)}{\partial x_k}\, dy
    = -\frac{\partial }{\partial x_k}(-\Delta)^{-1/2}f,
\end{align}
where $P_y f$ denotes the convolution of $f$ with the Poisson kernel 
\begin{align*}
    p(x,y) 
    = \frac{\wt{c}_d\,\,y}{\left(|x|^2 + y^2\right)^{\frac{d+1}{2}}},\quad  x\in \R^d, y>0,
\end{align*}
and $\Delta$ is the Laplacian on $\R^d$. Similarly, the second-order Riesz transforms $R^{(jk)}$ are defined as convolution operators with kernels 
\begin{align}\label{Ref2}
    K^{(jk)}(z)=c_d\frac{z_j z_k}{|z|^{d+2}},\quad j,k=1,2,\ldots,d, \quad
    c_d = \pi^{-\tfrac{d}{2}}\Gamma(\tfrac{d+2}{2}).
\end{align} 
In terms of the Fourier multiplier, they are given by 
\[
    \widehat{{R^{(jk)}f}}(\xi)=-\frac{\xi_j\xi_k}{|\xi|^2}\widehat{f}(\xi).
\]
If $T_t f$ denotes the convolution with the heat kernel
\[  
    P_t^{(d)}(x)=(2\pi t)^{-\tfrac{d}{2}}e^{-\tfrac{|x|^2}{2t}}, \quad x\in \R^d, t>0,
\]
then 
\begin{align}\label{secondRiesz} 
    R^{(jk)}f(x)
    =\int_0^{\infty} \frac{\partial^2  T_{t}  f(x)}{\partial x_j\partial x_k}\, dt
    =\frac{\partial^2 }{\partial x_j\partial x_k}(-\Delta)^{-1}f(x).
\end{align}

The formulations~\eqref{firstRiesz} and~\eqref{secondRiesz} allow the definition of Riesz transforms in a wide range of geometric settings, including manifolds, Lie groups, and Wiener space. These operators and their variants have been widely studied, with probabilistic methods playing a key role in obtaining sharp or near-sharp $L^p$-bounds; see~\cites{BanBau13, Dragi, BanOsc15, DomOscPet, DomPetSkr, Li08, Ose2} for an incomplete sample of this literature.

For the rest of the paper, $1<p<\infty$. We define  $p^*$ to be the maximum of $p$ and $q$,  where $q$ is the conjugate exponent of $p$. That is, 
\[
    p^\ast = \max\left\{p,\frac{p}{p-1}\right\}    
\]
so that 
\begin{equation}\label{BirkholdrConstant} 
    p^*-1= \begin{cases}
    \frac{1}{p-1}, &1<p\leq 2,\\
    p-1, & 2\leq p<\infty. 
    \end{cases}
\end{equation}
We will often refer to $(p^*-1)$ as ``Burkholder's constant.'' For the remaining of the paper the   $L^p$-norm of a function $f$ on  $\R^d$  with respect to the Lebesgue measure will be denote by $\|f\|_{L^p}$ and  the $p$-norm of a linear operator $T$ on $L^p(\R^d)$ will be denoted by $\|T\|_{L^p \to L^p}$.  That is, 
 \[
\|T\|_{p \to p}
   = \sup_{\|f\|_{L^{p}} = 1} \|Tf\|_{L^{p}}.
   \]
It was proved in~\cite{NazVol} and~\cite{BanMen} that for $1<p<\infty$,  $j,k=1,2,\ldots,d$, with $j\neq k$, 
\begin{align} 
    &\|2R^{(jk)}\|_{L^p\to L^p} \le (p^*-1),\label{RieszSharp1_1}\\
    &\|R^{(jj)}-R^{(kk)}\|_{L^p\to L^p}\leq (p^*-1). \label{RieszSharp1_2}
\end{align}
In~\cite{GesMonSak}, the sharpness of these bounds was proved. That is, 
\begin{align} 
    &\|2R^{(jk)}\|_{L^p\to L^p} =(p^*-1)\label{FullSharp}, \\
    &\|R^{(jj)}-R^{(kk)}\|_{L^p\to L^p}= (p^*-1)\label{FullSharp2},
\end{align} 
for $j\neq k$. 

When $j=k$, it was  proved in~\cite{BanOse} that 
\begin{equation}\label{RieszSharp2}
    \|R^{(jj)}\|_{L^p\to L^p}=\gamma(p),
\end{equation} 
where here and for the rest of the paper $\gamma(p)$ is the best constant given by Choi~\cite{Cho} for non-symmetric martingale transforms when the transforming predictable sequence takes values in $[0, 1]$ instead of in $[-1, 1]$, which is Burkholder's celebrated result~\cite{Bur84}. Although Choi's constant is not as explicit as Burkholder's constant $(p^*-1)$, it can be estimated quite well, as Choi showed. In particular, for large $p$ it satisfies 
\begin{align}\label{ChoiAsym}
    \gamma(p)\approx \frac{p}{2}+ \frac{1}{2}\log\left(\frac{1+e^{-2}}{2}\right) +\frac{\alpha_2}{p},
\end{align} 
where
\[
    \alpha_2=\left[\log\left(\frac{1+e^{-2}}{2}\right)\right]^2+\frac{1}{2}\log\left(\frac{1+e^{-2}}{2}\right)-2\left(\frac{e^{-2}}{1+e^{-2}}\right)^{2}.
\] 
Furthermore, for all $p$ 
\begin{align*}
    \max\left\{1, \frac{(p^*-1)}{2}\right\}\leq \gamma(p)\leq \frac{p^*}{2}. 
\end{align*} 
In particular when $p=2$,  both Burkholder's constant and  Choi's constant are equal to $1$.

For more general sharp inequalities that apply to multiplier operators related to those in~\eqref{FullSharp}-\eqref{RieszSharp2}, we refer the reader to~\cite{BanOse}*{Corollaries  1.3 and 1.4}. 

\subsection{Discrete analogues of singular integrals} 

The most basic singular integral is the Hilbert transform on the real line, defined for $f:\R\to\R$, in the principal value sense, by 
\begin{equation}\label{Hilbert1} 
    Hf(x)=\frac{1}{\pi}\int_{\R} \frac{f(x-y)}{y} dy.  
\end{equation} 
Its discrete analogue defined on the integers for $f:\Z\to \R$ is given by 
\begin{equation}\label{Hilbert2} 
    H_{\dis}f(n)=\frac{1}{\pi}\sum_{m\in\Z\setminus\{0\}}\frac{f(n-m)}{m}. 
\end{equation} 
The latter was introduced by D.~Hilbert in 1909. Its boundedness on $\ell^2(\Z)$ appeared in H.~Weyl's thesis written under Hilbert. In 1929, M.~Riesz~\cite{Riesz} proved its boundedness on $\ell^p(\Z)$ for any $1<p<\infty$ as a consequence of his proof for the boundedness of a continuous operator on $L^p(\R)$. The $L^p$-boundedness of $H$  was a problem of considerable interest to the analysis community at the time. See for example~\cite{Car1982}. For other variants of the discrete Hilbert transform and further history and references, we refer the reader to~\cite{BanKwa1, Titc26, Titc27, Laeng, Tor} and to \cite{BanKwa} where the long-standing,  90-year-old conjecture that $\|H\|_{L^p\to L^p}=\|H_{\dis}\|_{\ell^p\to \ell^p}$, is proved. Here and for the rest of the paper $\|\cdot\|_{\ell^p}$ and $\|\cdot\|_{\ell^p\to \ell^p}$ denote, respectively,  the $p$-norm of a function on the $d$-dimensional lattice $\Z^d$ and the $p$-norm of an operator on $\ell^p(\Z^d)$.

In their 1954 seminal paper, Calder\'on and Zygmund~\cite{CZ} extended Hilbert's definition to discrete singular integrals on the $d$-dimensional lattice $\Z^d$, $d> 1$. They observed~\cite[pg.~138]{CZ} that Riesz's proof that the $L^p$-boundedness of the Hilbert transform on the real line implies the $\ell^p$-boundedness of the discrete version on the integers applies to general singular integrals on $\R^d$, $d>1$. More precisely, let $Tf(x)=K*f(x)$ be a Calder\'on-Zygmund singular integral on $\R^d$ with kernel $K$ satisfying 

\begin{align}\label{ConCalZyg1}
\begin{cases}
(i)\,\,\,\,\,   \wh{K}\in L^{\infty}(\R^d\setminus\{0\}),&\\
(ii)\,\,\,     |K(x)|\leq \frac{\eta}{|x|^d}, & x\in \R^d\setminus\{0\}, \\
(iii)\,        |\nabla K(x)|\leq \frac{\eta}{|x|^{d+1}}, &x\in \R^d\setminus\{0\}. 
\end{cases}
\end{align}
For $f:\Z^d\to \R$, define its discrete analogue by
\begin{align}\label{DiscCalZyg}
    T_{\dis}f(n)=\sum_{m\in\Z^d}K_{\dis}(m)f(n-m),\quad K_{\dis}(m)=K(m)\ind_{\Z^d\setminus\{0\}}(m). 
\end{align}
Then 
\[T_{\dis}:\ell^p(\Z^d)\to\ell^p(\Z^d).
\] 
In fact, Riesz's argument yields  
\begin{equation}\label{RieszCZ}
    \|T_{\dis}\|_{\ell^p\to\ell^p}\leq  \|T\|_{L^p\to L^p}+C_{d, \eta}, 
\end{equation} 
where $C_{d, \eta}$ is a constant depending on the dimension $d$ and on the constant $\eta$ in~\eqref{ConCalZyg1}. 

Although no details of the proof of~\eqref{RieszCZ} are given--the only remark in~\cite[pg.~138]{CZ} (where their $n$ corresponds to our $d$) is that “for $n=1$ this remark is due to M. Riesz, and the proof in the general case follows a similar pattern”--the argument is indeed the same as Riesz’s, and full details have been known for many years. For further results and references in the area commonly known as Discrete Analogues in Harmonic Analysis (DAHA), which concerns the study of discrete counterparts on $\Z^d$ of operators in harmonic analysis on $\R^d$, we refer the reader to~\cites{MSW,PieSte1,Pierce,Kra}.

In this paper, motivated by the recent results in~\cite{BanKwa,BanKwa1,BanKimKwa} concerning sharp estimates for DAHA of the Hilbert and first-order Riesz transforms, we investigate similar problems for  the DAHA of second-order Riesz transforms defined as in \eqref{DiscCalZyg} by  
\begin{align}\label{disRiesz1} 
    R^{(jk)}_{\dis}(f)(n)=\sum_{m\in\Z^d}K^{(jk)}_{\dis}(m)f(n-m),
\end{align}
where
\begin{align}\label{disRiesz2}
    K^{(jk)}_{\dis}(m) = c_d \frac{m_j m_k}{|m|^{d+2}}\ind_{\Z^d\setminus\{0\}}(m), \qquad
    c_d =\pi^{-\tfrac{d}{2}} \Gamma(\tfrac{d+2}{2}).
\end{align}
Notice that since $|K^{(jk)}_{\dis}(m)|\leq \frac{c_d}{|m|^d}$, it follows from H\"older's inequality that the sum in \eqref{disRiesz1} is absolutely convergent for $f\in \ell^p$, $1<p<\infty$. Similarly, absolute convergence holds for the general discrete singular integrals with Calder\'on-Zygmund kernels as in \eqref{ConCalZyg1} and even more general singular integrals of non-convolution type \cite[Chapter 8]{Graf}.

\subsection{Main results} 
This paper deals with four distinct types of second-order Riesz transforms, with the overarching goal of showing that their $p$-norms coincide.

\begin{enumerate}
\item 
{\bf Classical second-order Riesz transforms.}   These are the classical second-order Riesz transforms $R^{(jk)}$ on $L^p(\R^d)$ with kernels given as in  \eqref{Ref2}.  
\item 
{\bf Discrete harmonic analysis analogues (DAHA) of second-order Riesz transforms.}  These are the discrete operators  $R_{\dis}^{(jk)}$ on $\ell^p(\Z^d)$ with kernels given as in  \eqref{DiscCalZyg}. 
\item 
{\bf Probabilistic discrete second-order Riesz transforms.} This is a new class of discrete  operators, denoted by $\cR^{(jk)}$, on $\ell^p(\Z^d)$ with kernels given by \eqref{ProKer}. They arise as conditional expectations of martingale transforms constructed from Doob $h$-processes, where $h$ is the periodic heat kernel $H_t(x)$ defined in \eqref{def_h}. 
\item 
{\bf Probabilistic continuous second-order Riesz transforms}. This is a new class of operators on $L^p(\R^d)$, denoted by  $\mathbf{R}^{(jk)}$, with kernels given by \eqref{ContinousKernels}. The kernels  are defined, formally, by reversing the construction in the definition of DAHA operators. For $|x|>1$, the kernels are defined to be the the kernels of $\cR^{(jk)}$ with $m$ replaced by $x$. For $|x|\leq 1$, the kernels are just the kernels for classical Riesz transforms on $\R^d$. It is then verified that the resulting kernels satisfy the Calder\'on-Zygmund conditions \eqref{ConCalZyg1}.  Hence, the probabilistic discrete second-order Riesz transforms are the DAHA of the probabilistic continuous second-order Riesz transforms in the sense of \eqref{DiscCalZyg}.
\end{enumerate} 

The problems addressed in this paper are the identification their $p$-norms and the Conjectures \ref{BigConj} and \ref{ProbContSharp} that they are all equal. We prove  that the $\ell^p(\Z^d)$-norms of $\cR^{(jk)}$ coincide with the $L^p(\R^d)$-norms of $R^{(jk)}$ for all $j \neq k$, and are comparable to it up to dimensional constants when $j = k$. In addition, it is shown that for all $j, k$, $\cR^{(jk)}$ differ from $R^{(jk)}_{\dis}$ by convolution with an $\ell^1(\Z^d)$-function. Similarly, the continuous operators $\mathbf{R}^{(jk)}$ and $R^{(jk)}$ differ by convolution with an $L^1(\R^d)$-function.

The construction of the probabilistic Riesz transforms and bounds for their $\ell^p(\Z^d)$-norms is motivated by \cite{BanMen}, where the space-time Brownian motion in the upper half space was used to represent the classical second-order Riesz transforms as conditional expectations of martingale transforms.  It also follows from, and is motivated by,  the discretization procedure used  in~\cite{BanKwa, BanKimKwa} for the discrete Hilbert transform and first-order Riesz transforms. Here, the periodic Poisson kernel used in the last two papers is replaced by the periodic heat kernel. The martingale inequalities of Burkholder~\cite{Bur84} and Choi~\cite{Cho} are used to derive upper bounds. 

The precise statements on the norms of the probabilistic Riesz transforms are: 

\begin{theorem}\label{thm:main} 
For all  $j,k=1,2,\ldots,d$. with $j\neq k$, we have 
\begin{align}\label{off-dia1}
    \|2\cR^{(jk)}\|_{\ell^p\to\ell^p}= (p^*-1)
    =\|2R^{(jk)}\|_{L^p\to L^p} \le \|2R_{\dis}^{(jk)}\|_{\ell^p\to\ell^p}
\end{align}
and 
\begin{align}\label{off-dia2}
  \|\cR^{(jj)}-\cR^{(kk)}\|_{\ell^p\to\ell^p}
   &=(p^*-1)
 =\|R^{(jj)}-R^{(kk)}\|_{L^p\to L^p}\\
    & \le \|R_{\dis}^{(jj)}-R_{\dis}^{(kk)}\|_{\ell^p\to\ell^p}\nonumber.
\end{align}
\end{theorem}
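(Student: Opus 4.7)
The plan is to realize $\cR^{(jk)}$ as the conditional expectation of a martingale transform, adapting the continuous construction of~\cite{BanMen} to the lattice through the periodic heat kernel, in the spirit of~\cite{BanKwa, BanKimKwa}. On a large torus $\T^d_N=\Z^d/N\Z^d$, for an $N$-periodic $f$, I would take the backward heat extension $u(x,t)$ and form the space--time martingale $M_t=u(X_t,T-t)$, where $X_t$ is a continuous-time simple random walk on $\T^d_N$ started from the uniform measure. For each ordered pair $(j,k)$, a deterministic predictable multiplier $A^{(jk)}$ with $|A^{(jk)}|\le 1$, built from the discrete second-order differences $\partial_j\partial_k u$ suitably normalized by the quadratic variation of $M$, produces a martingale transform $A^{(jk)}\cdot M$. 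The operator $\cR^{(jk)}f$ is then obtained by taking the conditional expectation of the transformed terminal value given $X_0$ and passing to the infinite-volume limit $N\to\infty$. A direct Fourier calculation on $\T^d_N$ identifies the resulting convolution kernel on $\Z^d$ as $K^{(jk)}_{\dis}$ plus a perturbation in $\ell^1(\Z^d)$; the precise decomposition is the content of Theorem~\ref{prop:Kdecomp}.

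Given this representation, the upper bounds in~\eqref{off-dia1} and~\eqref{off-dia2} follow from Burkholder's sharp martingale inequality~\cite{Bur84}. For $j\neq k$, $2\cR^{(jk)}$ appears as the conditional expectation of a martingale transform whose predictable multiplier takes values in $[-1,1]$, and likewise for $\cR^{(jj)}-\cR^{(kk)}$. Since conditional expectation is an $\ell^p$-contraction for $1<p<\infty$, both operators have $\ell^p$-operator norm at most $p^*-1$.

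For the matching lower bounds, and for the final inequality comparing $\cR^{(jk)}$ with $R^{(jk)}_{\dis}$, I would use a scaling argument. For $f\in C_c^\infty(\R^d)$ and $\eps>0$, set $f_\eps(n)=f(\eps n)$ for $n\in\Z^d$. The degree-$(-d)$ homogeneity of the kernel $K^{(jk)}$ yields Riemann-sum asymptotics
\[
\|R^{(jk)}_{\dis}f_\eps\|_{\ell^p(\Z^d)}\sim \eps^{-d/p}\|R^{(jk)}f\|_{L^p(\R^d)}, \qquad \|f_\eps\|_{\ell^p(\Z^d)}\sim \eps^{-d/p}\|f\|_{L^p(\R^d)},
\]
as $\eps\to 0$, giving $\|R^{(jk)}\|_{L^p\to L^p}\le \|R^{(jk)}_{\dis}\|_{\ell^p\to\ell^p}$. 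The $\ell^1(\Z^d)$-perturbation $g=\cR^{(jk)}-R^{(jk)}_{\dis}$ contributes only a lower-order term under this scaling, because $g\ast f_\eps$ converges after rescaling to a smoothing of $f$ whose operator norm is dominated by the singular part; hence the same limit argument gives $\|R^{(jk)}\|_{L^p\to L^p}\le \|\cR^{(jk)}\|_{\ell^p\to\ell^p}$. Combined with the upper bounds from Burkholder and the sharpness results $\|2R^{(jk)}\|_{L^p\to L^p}=p^*-1$ and $\|R^{(jj)}-R^{(kk)}\|_{L^p\to L^p}=p^*-1$ of~\cite{GesMonSak}, the full chains of (in)equalities in~\eqref{off-dia1} and~\eqref{off-dia2} are established.

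The main obstacle is the construction itself: designing the martingale, the predictable sequence, and the conditional expectation on $\T^d_N$ so that, after passing to the infinite-volume limit, the resulting kernel on $\Z^d$ differs from $K^{(jk)}_{\dis}$ by a genuinely $\ell^1$ function rather than merely an $\ell^{1+\delta}$ one. This requires precise pointwise comparisons between the periodic heat kernel on $\T^d_N$ and the Gaussian on $\R^d$, together with uniform control of the quadratic variation of $M_t$ so that the conditional expectation defines a bounded convolution operator on $\ell^p(\Z^d)$; these estimates are precisely what Theorem~\ref{prop:Kdecomp} will provide, and the $\ell^1$ (as opposed to $\ell^{1+\delta}$) control is what makes the scaling argument of the third step viable.
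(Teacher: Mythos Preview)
Your high-level strategy---represent $\cR^{(jk)}$ as a conditional expectation of a martingale transform, invoke Burkholder for the upper bound, and use a dilation/Riemann-sum argument for the lower bound---matches the paper's, and your scaling argument for $\|R^{(jk)}\|_{L^p}\le\|R^{(jk)}_{\dis}\|_{\ell^p}$ and $\|R^{(jk)}\|_{L^p}\le\|\cR^{(jk)}\|_{\ell^p}$ is essentially the content of Propositions~\ref{discont1} and~\ref{lem:S-pnorm}.

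The genuine gap is in the probabilistic construction. The paper does \emph{not} use a simple random walk on a discrete torus $\Z^d/N\Z^d$. It works instead with a \emph{continuous} diffusion in $\R^d$: the Doob $h$-transform of Brownian motion by the periodic heat kernel $H_t(x)=\sum_{n\in\Z^d}P_t(x-n)$, started at $(0,T)$. This process lives in $\R^d$ but terminates, as $t\uparrow T$, at a random point of $\Z^d$. The martingale is $M_t^f=u_f(X_t,T-t)$ with $u_f$ as in~\eqref{eq:def_u}, and It\^o's formula gives $dM_t^f=\nabla u_f\cdot dB_t$, a genuine stochastic integral against Brownian motion. The transform is $\int_0^t A\,\nabla u_f\cdot dB_s$ for a \emph{constant} $d\times d$ matrix $A$; taking $A=A_{jk}-A_{kj}$ (operator norm exactly $1$) and conditioning on $X_T$ yields $2\cR^{(jk)}$, so Burkholder delivers the sharp bound $(p^*-1)$ with no further work. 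The kernel~\eqref{ProKer} and the $\ell^1$-perturbation of Theorem~\ref{prop:Kdecomp} then come from computing the bilinear form~\eqref{eq:kernel} and an inverse-Gamma change of variables; no infinite-volume limit $N\to\infty$ is involved.

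Your random-walk route is problematic on two counts. First, $\cR^{(jk)}$ is \emph{defined} in the paper by the kernel~\eqref{ProKer}; a different probabilistic construction would a priori yield a different operator, and you give no mechanism for identifying the two. Second, for a simple random walk the martingale increments are coordinate-direction jumps; it is unclear how to manufacture a predictable $[-1,1]$-valued multiplier whose transform, after conditioning, produces an operator with kernel $K^{(jk)}_{\dis}$ plus an $\ell^1$ correction. Your phrase ``discrete second-order differences $\partial_j\partial_k u$ suitably normalized by the quadratic variation'' does not describe a standard martingale transform, and the crucial claim $|A^{(jk)}|\le 1$ is asserted rather than verified. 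The Brownian $h$-transform avoids all of this because the matrix acts linearly on a continuous gradient and its norm is read off directly.
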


\begin{theorem}\label{thm:diag}
For all $j=1,2,\ldots,d$,  
\begin{align}\label{equal:diag}
    \|\cR^{(jj)}\|_{\ell^p\to\ell^p}\le \gamma(p)=\|R^{(jj)}\|_{L^p\to L^p}.   
\end{align}
 Furthermore, there exist dimensional constants $C_1$ and $C_2$ such that 
\begin{align}
    C_1\|R^{(jj)}_{\dis}\|_{\ell^p\to\ell^p}
    \le \|\cR^{(jj)}\|_{\ell^p\to\ell^p}
    \le C_2\|R^{(jj)}_{\dis}\|_{\ell^p\to\ell^p}.
\end{align} 
\end{theorem} 

\begin{remark} 
We observe that the second equalities in~\eqref{off-dia1},~\eqref{off-dia2} and~\eqref{equal:diag} are in fact the equalities in~\eqref{FullSharp},~\eqref{FullSharp2} and~\eqref{RieszSharp2}.  It will be shown in Section~\ref{ProbDisRiesz} (Corollary~\ref{HalfSharp}), that $``\leq"$ in the first equalities of~\eqref{off-dia1},~\eqref{off-dia2}, as well as the inequality in~\eqref{equal:diag},  follow from the martingale inequalities of Burkholder and Choi. The proof of the opposite inequalities in~\eqref{off-dia1} and~\eqref{off-dia2} is done in Propositions ~\ref{discont1} and \ref{lem:S-pnorm} using a dilation and approximation argument. Such an argument does not apply to the kernels for $\cR^{(jj)}$;  see the comments in the introduction of Section \ref{sec:proof} and Remark~\ref{rmk:whyjnek}.  
\end{remark}

\begin{conjecture}\label{BigConj}
For all $j, k=1,2,\ldots,d$, $j\neq k$, 
\begin{align}
    \|\cR^{(jk)}\|_{\ell^p\to\ell^p}
    =\|R_{\dis}^{(jk)}\|_{\ell^p\to\ell^p}
    =\|R^{(jk)}\|_{L^p\to L^p}, 
\end{align} 
and
\begin{align}
    \|\cR^{(jj)}-\cR^{(kk)}\|_{\ell^p\to\ell^p}
    =\|R_{\dis}^{(jj)}-R_{\dis}^{(kk)}\|_{\ell^p\to\ell^p}
    =\|R^{(jj)}-R^{(kk)}\|_{L^p\to L^p}. 
\end{align}
For all $j=1, 2, \dots d$, 
\begin{align}
    \|\cR^{(jj)}\|_{\ell^p\to\ell^p}
    =\|R_{\dis}^{(jj)}\|_{\ell^p\to\ell^p}
    =\|R^{(jj)}\|_{L^p\to L^p}.
\end{align} 
\end{conjecture}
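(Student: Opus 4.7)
The plan is to close the norm chains sandwiched by Theorems~\ref{thm:main} and~\ref{thm:diag} by filling in the two missing inequalities. For $j\neq k$ we already have
\[
    \|\cR^{(jk)}\|_{\ell^p\to\ell^p} \le \|R^{(jk)}\|_{L^p\to L^p} \le \|R^{(jk)}_{\dis}\|_{\ell^p\to\ell^p},
\]
together with a decomposition $\cR^{(jk)} - R^{(jk)}_{\dis} = h \ast (\cdot)$ for some $h \in \ell^1(\Z^d)$ (Theorem~\ref{prop:Kdecomp}), and analogously for $\cR^{(jj)}-\cR^{(kk)}$. Proving the conjecture therefore reduces to establishing the lower bound $\|\cR^{(jk)}\|_{\ell^p\to\ell^p}\ge\|R^{(jk)}\|_{L^p\to L^p}$ (plus its antidiagonal and diagonal versions) and the upper bound $\|R^{(jk)}_{\dis}\|_{\ell^p\to\ell^p}\le\|R^{(jk)}\|_{L^p\to L^p}$. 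For $j=k$, one must additionally show $\|\cR^{(jj)}\|_{\ell^p\to\ell^p}\ge\gamma(p)$ to saturate Choi's bound.

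First, I would prove the lower bound by lattice refinement. Fix $f\in C_c^\infty(\R^d)$ and, for $N\in\N$, set $f_N(n)=f(n/N)$ on $\Z^d$, so that $N^{-d}\|f_N\|_{\ell^p}^p\to\|f\|_{L^p}^p$. Substituting $y=m/N$ in
\[
    R^{(jk)}_{\dis}f_N(n) = c_d\sum_{m\neq 0}\frac{m_j m_k}{|m|^{d+2}} f\!\Paren{\frac{n-m}{N}}
\]
and using that the Riesz kernel is homogeneous of degree $-d$, the right-hand side becomes a Riemann approximation at mesh $1/N$ to $R^{(jk)}f(n/N)$, with the principal-value contribution near $y=0$ absorbed by Taylor-expanding $f$ together with the spherical cancellation $\int_{|y|=r} y_j y_k|y|^{-d-2}\,d\sigma = 0$ for $j\neq k$ (and the analogous trace identity when $j=k$). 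Passing to $\ell^p$ and letting $N\to\infty$ yields $\|R^{(jk)}_{\dis}\|_{\ell^p\to\ell^p}\ge\|R^{(jk)}\|_{L^p\to L^p}$; the same bound should transfer to $\cR^{(jk)}$ because the perturbation $h\ast f_N$ is asymptotically negligible (the symmetries that annihilate $\sum_m K^{(jk)}_{\dis}(m)$ for $j\ne k$ should also force $\sum_m h(m) = 0$, and a Taylor expansion of the slowly varying $f_N$ then gives $\|h\ast f_N\|_{\ell^p}=O(N^{-1})\|f_N\|_{\ell^p}$). The same scheme applies to $\cR^{(jj)}-\cR^{(kk)}$; for the single diagonal $\cR^{(jj)}$, one would feed lattice-refined near-extremizers of Choi's martingale inequality, transported to $\Z^d$ via the heat-kernel lift underlying the construction of $\cR^{(jj)}$, to obtain the lower bound $\gamma(p)$.

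The main obstacle is the upper bound $\|R^{(jk)}_{\dis}\|_{\ell^p\to\ell^p}\le\|R^{(jk)}\|_{L^p\to L^p}$. This is not a direct consequence of de~Leeuw-type transference, because the Fourier multiplier of $R^{(jk)}_{\dis}$ on $\T^d$ is the Poisson periodization $\sum_{\ell\in\Z^d} \widehat{K^{(jk)}}(\xi+2\pi\ell)$ of $-\xi_j\xi_k/|\xi|^2$ rather than its restriction, and periodization can in principle inflate the $L^p$-multiplier norm. The natural attempt is a Stein-style discretization: for $f\in\ell^p(\Z^d)$ and a smooth approximate identity $\eta_\epsilon$ of scale $\epsilon$, form $F_\epsilon(x)=\sum_n f(n)\eta_\epsilon(x-n)$, apply $R^{(jk)}$, and sample at $\Z^d$; the off-diagonal kernel comparison $|R^{(jk)}\eta_\epsilon(m)-K^{(jk)}(m)|\lesssim \epsilon/|m|^{d+1}$ then controls $R^{(jk)}F_\epsilon$ restricted to $\Z^d$ against $R^{(jk)}_{\dis}f$ away from the diagonal, while a finite-rank correction must absorb the diagonal contribution $f(n)R^{(jk)}\eta_\epsilon(0)$. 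Driving the resulting error operator to zero in $\ell^p\to\ell^p$ as $\epsilon\to 0$ is the delicate step, and I expect it to be the real content of the conjecture. A possibly more tractable route is to argue first for $\cR^{(jk)}$, exploiting that its probabilistic construction as the conditional expectation of a martingale transform driven by the heat kernel on $\T^d$ mirrors the Brownian-motion construction of $R^{(jk)}$; a coupling of the two martingale systems, passed through Burkholder's and Choi's sharp inequalities, would give $\|\cR^{(jk)}\|_{\ell^p\to\ell^p}\le\|R^{(jk)}\|_{L^p\to L^p}$ with an explicit rate, and the $\ell^1$ identity $\cR^{(jk)}-R^{(jk)}_{\dis}=h\ast(\cdot)$ combined with the scaling of Step~1 would then upgrade the comparison into the equality conjectured above.
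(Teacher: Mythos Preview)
The statement you are attempting to prove is labeled a \emph{Conjecture} in the paper, and the paper does not prove it. There is therefore no ``paper's own proof'' to compare against; the authors explicitly leave it open, remarking at the end of Section~\ref{sec:proof} that even the $p=2$ case ``remains elusive.'' Several pieces of your plan are already established in the paper: the lower bound $\|\cR^{(jk)}\|_{\ell^p\to\ell^p}\ge\|R^{(jk)}\|_{L^p\to L^p}$ for $j\neq k$ is exactly Proposition~\ref{lem:S-pnorm}, and your lattice-refinement argument for $\|R^{(jk)}_{\dis}\|_{\ell^p\to\ell^p}\ge\|R^{(jk)}\|_{L^p\to L^p}$ is Proposition~\ref{discont1}. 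So for $j\neq k$ the equality $\|\cR^{(jk)}\|_{\ell^p\to\ell^p}=\|R^{(jk)}\|_{L^p\to L^p}$ is already Theorem~\ref{thm:main}, not something your proposal adds.

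What remains open, and what your proposal does not close, is precisely the inequality you yourself flag as ``the main obstacle'': $\|R^{(jk)}_{\dis}\|_{\ell^p\to\ell^p}\le\|R^{(jk)}\|_{L^p\to L^p}$. Your Stein-style discretization sketch does not control the error operator in $\ell^p\to\ell^p$ norm; as you note, periodization can inflate multiplier norms, and driving the finite-rank diagonal correction to zero is not justified. The alternative route through a ``coupling of the two martingale systems'' is not a proof either: the paper already uses Burkholder's and Choi's inequalities to get $\|\cR^{(jk)}\|_{\ell^p\to\ell^p}\le p^*-1$, and no coupling argument is supplied that would transfer this to $R^{(jk)}_{\dis}$ beyond the $\ell^1$ perturbation $\cJ^{(jk)}$, whose norm the paper cannot compute (see the discussion after Corollary~\ref{KerJ}). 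For $j=k$, your suggestion to ``feed lattice-refined near-extremizers of Choi's martingale inequality'' is not an argument: Choi's extremizers are martingale sequences, not functions on $\Z^d$, and no mechanism is given to realize them through the conditional-expectation operator $\cR^{(jj)}$. In short, your proposal correctly identifies the gap but does not fill it; the conjecture remains open.
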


Notice that~\eqref{FullSharp},~\eqref{FullSharp2}, and the Calder\'on-Zygmund inequality ~\eqref{RieszCZ} give 
\begin{align}\label{Upper1-CZ}
    \|2R_{\dis}^{(jk)}\|_{\ell^p\to\ell^p}&\leq \|2R^{(jk)}\|_{L^p\to L^p}+C_d\\
    &=(p^*-1)+C_d,\,\,\, j\neq k,\nonumber
\end{align}
and
\begin{align}\label{Upper2-CZ}
    \|R_{\dis}^{(jj)}-R_{\dis}^{(kk)}\|_{\ell^p\to\ell^p}&\leq \|R^{(jj)}-R^{(kk)}\|_{L^p\to L^p}+C_d\\
    &=(p^*-1)+C_d, \,\,\, j\neq k.\nonumber
\end{align}

In \eqref{convolution1}, \eqref{convolution2} and Corollary~\ref{KerJ}, we show that the operators $\cR^{(jk)}$ (or rather $-\cR^{(jk)}$) differ from the operators $R_{\dis}^{(jk)}$  by the convolution with an $\ell^1(\Z^d)$-function whose $\ell^1$-norm is bounded above by a dimensional constant $C_d$.  More precisely, for $f:\Z^d\to \R$ and for all $j, k$, 
\[
    (R_{\dis}^{(jk)}+\cR^{(jk)}) f(n)=\cJ^{(jk)}* f(n), 
\]
where $\|\cJ^{(jk)}\|_{\ell^1}\leq C_d$.  This yields the bounds  \eqref{Upper1-CZ} and \eqref{Upper2-CZ}  without appealing to Calder\'on-Zygmund theory, see Corollary~\ref{J-kern}.

Although the function $\cJ^{(jk)}$ has an appealing closed form (see Corollary~\ref{InverseGammaU}, the proof of Corollary \ref{U-new} and \eqref{Referee1} in the proof of Corollary~\ref{KerJ}), we have not been able to evaluate  its $\ell^1$-norm. In particular, it would be interesting to prove  that $\|\cJ^{(jk)}\|_{\ell^1}\leq C$, with $C$ independent of $d$.  This  will show that the $\ell^p$-norms of the operators $R^{(jk)}_{\dis}$ are bounded above with a bound of the forms $C(p^*-1)$,  with $C$ independent of $d$. 
Such a result would be weaker than Conjecture~\ref{BigConj} but of interest nevertheless. To the best of our knowledge, even verifying that $R^{(j)}_{\dis}$ or $R^{(jk)}_{\dis}$  are bounded on $\ell^2(\Z^d)$ with constants independent of dimension is not currently known. Perhaps calculations similar to those in \cite{Gra1} to compute the $\ell^2$ norm of $H_{\dis}$  might yield the desired dimension-free bounds in this case. 

Finally, it is important to note here that the discrete second-order Riesz transforms defined by periodic Fourier multipliers, as in \cite{deLeeuw}, \cite[Section 2]{MSW}, \cite[Chapter VII]{SteWei}, and many other available references on transference techniques, have been extensively studied for many years. A similar statement applies to Riesz transforms obtained from discrete derivatives and semi-discrete Laplacian, where techniques from Littlewood-Paley theory and the martingale approach  of Gundy-Varopoulos~\cite{GV79} have been successfully used to obtain sharp or near-sharp bounds on their $p$-norms. For some of these applications, see, for example,~\cite{Pet3, DomOscPet, Tor, Lus-Piq4} and the references therein.  

\subsection{Structure of paper} 
The paper is organized as follows. In Section~\ref{sec:probdisc}, we establish several properties of the periodic heat kernel and its connection with the Jacobi theta function. We also recall the martingale inequalities used throughout the paper and construct the general conditional expectation operators, identifying their corresponding kernels. In Section~\ref{ProbDisRiesz}, we define the probabilistic second-order Riesz transforms and show that their kernels are multiples of those of $R^{(jk)}_{\dis}$ by a function that--although somewhat complicated--admits an explicit expression and has  useful boundedness properties. The proofs of Theorems~\ref{thm:main} and~\ref{thm:diag} are given in Section~\ref{sec:proof}. The construction of the probabilistic discrete Beurling-Ahlfors operator is presented in Section~\ref{sec:BA}. Finally, in Section~\ref{CZ-Prob}, we answer in the affirmative the natural question of whether the probabilistic second-order discrete Riesz transforms $\cR^{(jk)}$ arise as DAHA of Calder\'on-Zygmund singular integrals  similarly to~\eqref{disRiesz1} and~\eqref{disRiesz2}.

Throughout the paper, we use $C_0, C_1, C_2, \ldots$, or simply $C_d$, to denote constants that depend on the dimension $d$ and whose values may change from line to line. In contrast, $c_d$ will always denote the normalization constant appearing in the kernel of the classical second-order Riesz transforms given in~\eqref{Ref2}.  

\section{Space-time martingales and discrete singular integrals}\label{sec:probdisc}

This section constructs a large collection of discrete singular integrals on the lattice $\Z^d$ which are bounded on $\ell^p(\Z^d)$, $1<p<\infty$, with explicit upper bounds on their norms. They are obtained as conditional expectations of martingale transforms of a space-time process in the style of the construction in \cite{BanMen} and ~\cite{BanKwa, BanKimKwa}.  We will see in Section~\ref{ProbDisRiesz} that this construction leads to the probabilistic discrete second-order Riesz transforms whose $\ell^p(\Z^d)$-norms coincide with those of the classical Riesz transforms on $L^p(\R^d)$--Theorem~\ref{thm:main}. The periodic Poisson kernel used in~~\cite{BanKwa, BanKimKwa} is replaced here by the periodic heat kernel to make the connection to~\cite{BanMen}. 

\subsection{The periodic heat kernel}

The heat kernel on $\R^d $, $d\geq 1$, is given by
\[
    P_t^{(d)}(x)=(2\pi t)^{-\tfrac{d}{2}}e^{-\tfrac{|x|^2}{2t}},\qquad x\in \R^d, t>0. 
\]
Note that with $x=(x_1, x_2, \dots, x_d)$, $P^{(d)}_t(x) = \prod_{k=1}^d P^{(1)}_t(x_k)$ and we simply write $P_t( x):=P^{(d)}_t(x)$. Define
\begin{equation}\label{def_h}
   H_t(x)= H(x,t)=\sum_{n\in \Z^d}P_t(x-n),\quad x\in\R^d,  t> 0. 
\end{equation}
We call this the {\it periodic heat kernel in $\R^d$}, which is the same as the heat kernel on the $d$-dimensional torus  which for the rest of the paper will be denoted by $Q$ and identified as $Q=[-\tfrac{1}{2}, \tfrac{1}{2})^d$.  In this paper, to make the connection to second-order Riesz transforms, we will be interested in the case of  $d\geq 2$.  For notational convenience at various points in what follows we will use either $H_t(x)$ or $H(x,t)$ for the function $H$. 

It is well-known that the heat kernel on the torus can be expressed in terms of the first Jacobi Theta function, see for example~\cite[Chapter 10]{SteSha}. For $z\in\bC$ and $\tau\in\bH:=\{\tau\in\bC: \im(\tau)>0\}$, the {\it Jacobi Theta function} is given by
\[
    \vartheta(z\mid\tau) = \sum_{n=-\infty}^\infty \exp(\pi i n^2 \tau + 2\pi i n z).
\]
The function $\vartheta$ is entire in $z\in\bC$, holomorphic in $\tau\in\bH$, and  the following identities hold: 
\begin{align*}
    \vartheta(z\mid\tau) &= \vartheta(-z\mid\tau), \qquad
    \vartheta(z+1\mid\tau) = \vartheta(z\mid\tau), \\
    \vartheta(z\mid -1/\tau) &= \sqrt{\frac{\tau}{i}}e^{\pi i \tau z^2}\vartheta(z\tau\mid\tau).
\end{align*}
Here we use the variant of $\vartheta$ given by 
\begin{align*}
    \theta(x,t):=\vartheta(x\mid 2\pi i t)=\sum_{n\in\Z}\exp(-2t\pi^2 n^2+2\pi i n x), \quad x\in\R,\,\,  t>0.
\end{align*}
By the  Poisson summation formula,  
\begin{align}\label{eq:h-prod}
   H_t(x)= \prod_{k=1}^d\theta(x_k, t),
\end{align}
for $x\in\R^d$, $t>0$. The representation of $H_t$ will be useful in various calculations below. 

Our probabilistic Riesz transforms $\cR^{(jk)}$ will be constructed from conditional expectations of martingale transforms arising from a space-time Doob $h$-process where the function $h$ is the periodic heat kernel $H_t(x)$. For this purpose, we need various estimates not only of $H_t(x)$ but also of its gradient. 

The literature on heat kernel estimates on manifolds--including compact manifolds with nonnegative curvature of which the torus is a very special case--is extensive. It is therefore quite likely that the estimates we require have already appeared in, or can be deduced from, existing results in the literature. To make the paper self-contained and to avoid technicalities that lie outside the  main focus of this paper and unless the results are explicitly sated for the torus,  we provide direct proofs of the necessary estimates. Our arguments rely on elementary calculations using the explicit formula for $H_t(x)$.

We begin by recalling the upper and lower Gaussian estimates for $H_t(x)$ which can be found in \cite[Sections 2 \& 3]{Mah}. For all $x\in \R^d$ and all $t>0$, 
\begin{align}\label{heatestimate}
    (2\pi t)^{-\tfrac{d}{2}}e^{-\frac{\|x\|_{Q}^2}{2t}}\leq H_t(x)\leq C_d (t\wedge 1)^{-\frac{d}{2}}e^{-\frac{\|x\|_{Q}^2}{2t}},
\end{align} 
where $\|x\|_Q=\min\{|x-n|:n\in\Z^d\}$.     In fact, the lower bound follows trivially by observing that 
\[ 
    (2\pi t)^{-\tfrac{d}{2}}e^{-\frac{\|x\|_{Q}^2}{2t}}=\sup_{n\in \Z^d}P_t(x-n)\leq \sum_{n\in \Z^d} P_t(x-n)=H_t(x). 
\]
The upper bound in \eqref{heatestimate} is based on the explicit formula for $H_t$ and again its proof is of an elementary nature. We also note that for all $x\in Q$, $0\leq \|x\|_Q\leq \sqrt{d}/2$. 

The following lemma will be used to identify the boundary values of the Doob $h$-parabolic function $u_f(x, t)$ defined by \eqref{eq:def_u} and to establish  the bounds for the function $U(m)$  appearing  in Theorem \ref{prop:Kdecomp} (see Corollary \ref{lem:Ulimit}).  
\begin{lemma}\label{lem:hlimit}
For all $(x,t)\in\R^d\times(1,\infty)$, $|H_t(x)-1|\le \frac{C_d}{\sqrt{t}}$.  In particular,  $\lim_{t\to\infty}H_t(x)=1$  uniformly in $x\in\R^d$. Furthermore, for all $x\in\R^d$, 
\begin{align}\label{ConstantHeat}
    \left| \frac{1}{H_t(x)}-1\right| 
    \le 
    \begin{cases}
        \frac{C_1}{\sqrt{t}} e^{\frac{C_2}{t}}, & 0<t<1,\\
        \frac{C_3}{\sqrt{t}}, & t\ge 1,
    \end{cases}
\end{align}
for some dimensional constants $C_1,C_2,C_3>0$.  
\end{lemma}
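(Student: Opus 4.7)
My plan is to exploit the one-dimensional factorization $H_t(x)=\prod_{k=1}^d \theta(x_k,t)$ from~\eqref{eq:h-prod}, together with the two equivalent expressions for $\theta$: the spectral form $\theta(x,t)=\sum_{n\in\Z} e^{-2\pi^2 n^2 t}e^{2\pi inx}$, which is effective for $t$ large, and the Poisson-summed ``heat-kernel'' form $\theta(x,t)=(2\pi t)^{-1/2}\sum_{n\in\Z}e^{-(x-n)^2/(2t)}$, which is effective for $t$ small. Everything reduces to one-variable estimates that are then assembled via the product.

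First I would prove the estimate on $|H_t(x)-1|$ for $t>1$. Isolating the $n=0$ term in the spectral form and using $n^2\ge n$ yields
\[
    |\theta(x,t)-1|\le 2\sum_{n\ge 1}e^{-2\pi^2 n^2 t}\le Ce^{-2\pi^2 t},
\]
which for $t\ge 1$ is far stronger than $C/\sqrt{t}$. Combining the trivial consequence $|\theta(x_k,t)|\le 2$ with the telescoping identity
\[
    \prod_{k=1}^d a_k-1=\sum_{j=1}^d (a_j-1)\prod_{k<j}a_k
\]
applied to $a_k=\theta(x_k,t)$ gives $|H_t(x)-1|\le C_d e^{-2\pi^2 t}\le C_d/\sqrt{t}$ for $t\ge 1$. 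This proves the first assertion and, as an immediate corollary, the uniform convergence $H_t(x)\to 1$ as $t\to\infty$.

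For the reciprocal bound I would split into cases. When $t\ge 1$, the spectral series (being real: $\theta(x,t)=1+2\sum_{n\ge 1}e^{-2\pi^2 n^2 t}\cos(2\pi nx)$) also gives the lower bound $\theta(x,t)\ge 1-Ce^{-2\pi^2 t}\ge c_0>0$ uniformly in $x\in\R$ and $t\ge 1$, so $H_t(x)\ge c_0^d>0$ and
\[
    \Big|\tfrac{1}{H_t(x)}-1\Big|=\frac{|H_t(x)-1|}{H_t(x)}\le \frac{C_d}{\sqrt{t}}.
\]
When $0<t<1$, I would pass to the probabilistic representation, retaining only the nearest-integer term: $\theta(x_k,t)\ge (2\pi t)^{-1/2}e^{-(x_k-n_k)^2/(2t)}$, where $n_k$ is the integer nearest $x_k$, hence
\[
    H_t(x)\ge (2\pi t)^{-d/2}e^{-\|x\|_Q^2/(2t)}\ge (2\pi t)^{-d/2}e^{-d/(8t)}
\]
using $\|x\|_Q\le \sqrt{d}/2$. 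This gives $1/H_t(x)\le (2\pi t)^{d/2}e^{d/(8t)}\le C_1 e^{C_2/t}$ for $0<t<1$, and therefore
\[
    \Big|\tfrac{1}{H_t(x)}-1\Big|\le \tfrac{1}{H_t(x)}+1\le C_1 e^{C_2/t}\le \tfrac{C_1}{\sqrt{t}}e^{C_2/t},
\]
the last step absorbing the factor $1/\sqrt t>1$ for free.

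The proof is a routine combination of two standard bounds on the one-dimensional theta function; the only mild conceptual point is that the two regimes $t\ge 1$ and $0<t<1$ genuinely require different representations of $\theta$, so the case split is unavoidable. The product structure of $H_t$ then transfers these one-dimensional bounds to $\R^d$ at the cost only of dimensional constants, and the stated $1/\sqrt{t}$ rate for $t\ge 1$ is in fact far from sharp (we actually get exponential decay).
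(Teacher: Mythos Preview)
Your proof is correct, but it follows a different route from the paper's. The paper works directly in $\R^d$: it writes $H_t(x)-1=\sum_{n}\int_Q\bigl(P_t(x-n)-P_t(x-z-n)\bigr)\,dz$ (using $\int_{\R^d}P_t=1$), then applies a gradient estimate $|\nabla P_t(x-y)|\le \tfrac{C_d}{\sqrt t}P_{4t}(x)e^{d/(16t)}$ to obtain the unified bound $|H_t(x)-1|\le \tfrac{C_d}{\sqrt t}H_{4t}(x)e^{d/(16t)}$, valid for all $t>0$. Both assertions then follow by controlling $H_{4t}$ (respectively $H_{4t}/H_t$) via the two-sided heat estimate~\eqref{heatestimate}. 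Your approach instead exploits the factorization $H_t=\prod_k\theta(x_k,t)$ and the spectral series for $\theta$, which for $t\ge 1$ is sharper (exponential rather than $t^{-1/2}$) and entirely elementary; for $0<t<1$ you take the crude route $|1/H_t-1|\le 1/H_t+1$ together with the single-term lower bound from~\eqref{heatestimate}. The paper's argument has the advantage of producing one inequality valid across all $t$, with the $1/\sqrt t$ prefactor arising naturally from the gradient bound rather than being inserted artificially at the end; your argument is shorter and avoids the mean-value step, at the cost of treating the two time regimes by genuinely separate mechanisms and obtaining a bound for small $t$ that does not reflect that $|1/H_t-1|$ is actually small as $t\uparrow 1$.
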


\begin{proof}
Since  $H_t(x)$ is periodic in $x$, it suffices to consider $x\in Q$. Since
\begin{align*}
    1=\int_{\R^d} P_t(x-z)\, dz = \sum_{n\in \Z^d}\int_{n+Q}P_t(x-z)\, dz
    =\sum_{n\in\Z^d} \int_Q P_t(x-z-n)\, dz,
\end{align*}
we have
\begin{align*}
    H_t(x)-1
    = \sum_{n\in\Z^d} \int_Q (P_t(x-n)-P_t(x-z-n))\,dz.
\end{align*}
For $x\in\R^d$ and $y\in Q$, using $|x-y|^2\ge \frac12 |x|^2-|y|^2\ge \frac12 |x|^2-\frac{d}{4}$ and
\[
    |x-y| e^{-\frac{|x-y|^2}{2}}\le \sqrt{2}e^{-\frac12} e^{-\frac{|x-y|^2}{4}},
\]
we obtain
\[
    |\nabla_x P_t(x-y) | 
    = \tfrac{|x-y|}{t}P_t(x-y) 
    \le \tfrac{C_d}{\sqrt{t}}P_{2t}(x-y)
    \le  \tfrac{C_d}{\sqrt{t}}P_{4t}(x)e^{\frac{d}{16t}}.
\]
Thus, by the mean value theorem, 
\begin{align}\label{L1-vs-L2}
    |H_t(x)-1|
    &\le \sum_{n\in\Z^d} \int_Q |z||\nabla_x P_t(x-n-\theta z)|\,dz\nonumber\\
    &\le \frac{C_d}{\sqrt{t}}e^{\frac{d}{16t}}\sum_{n\in\Z^d} \int_Q P_{4t}(x-n)\,dz\nonumber\\
    &=\frac{C_d}{\sqrt{t}}H_{4t}(x)e^{\frac{d}{16t}}.
\end{align}

Suppose $(x,t)\in\R^d\times (1,\infty)$. By~\eqref{heatestimate}, we have $|H_{4t}(x)|\le C_d$. Since $e^{\frac{d}{16t}}\le C_d$ for $t\ge 1$, we conclude that 
\[
    |H_t(x)-1|\le \frac{C_d}{\sqrt{t}}, 
\]
proving the first assertion in the Lemma. 

For \eqref{ConstantHeat}, consider $(x,t)\in\R^d\times (0,\infty)$. Using \eqref{heatestimate} we have that 
\[
    \frac{H_{4t}(x)}{H_t(x)}
    \leq C_d \frac{(4t\wedge 1)^{-\frac{d}{2}}e^{-\frac{\|x\|_{Q}^2}{8t}}}{t^{-\frac{d}{2}}e^{-\frac{\|x\|_{Q}^2}{2t}}}
    \le C_1 e^{\frac{C_2}{t}}
\]
and hence
\[
    \left|\frac{1}{H_t(x)}-1\right|
    \le \frac{C_1}{\sqrt{t}}\frac{H_{4t}(x)}{H_t(x)}e^{\frac{d}{16t}}
    \le \frac{C_1}{\sqrt{t}}e^{\frac{C_2}{t}}, 
\]
where $C_1,C_2>0$ are dimensional constants. We finish the proof by taking $C_3=C_1e^{C_2}$ for the case $t\ge 1$.
\end{proof}

The following lemma will be used in Section \ref{ProjectionOperators} to compute the kernels for the  discrete operators arising from conditional expectations of martingale transforms.  For the rest of the paper, the gradient $\nabla$ and the Laplacian $\Delta$  are with respect to the $x$ variable, and $\partial_{z} \theta(x_k,t)$ will denote the derivative of $\theta$ with respect to the first variable. 

\begin{lemma}\label{lem:der-h-bound} For all $x\in \R^d$ and $t>0$ there exist dimensional constants $C_1,  C_2$ such that
\[|\nabla\log H_t(x)|\le 
    \begin{cases}
        \frac{C_1}{t}, &0<t<1, \\
        \frac{C_2}{\sqrt{t}}, &t\ge 1.
    \end{cases}
\]    
\end{lemma}

\begin{proof}
Since $H_t(x)$ is periodic in $x\in\R^d$, we assume that $x\in Q=[-\tfrac{1}{2}, \tfrac{1}{2})^d$. By~\eqref{eq:h-prod}, we have
\[\left|\nabla\log H_t(x)\right|^2
    =\sum_{k=1}^d \left|\frac{\partial_{z} \theta(x_k,t)}{\theta(x_k,t)}\right|^2,
\]
where 
\begin{align}\label{eq:theta}
    \theta(x_k,t)
    =\sum_{n\in\Z}\exp(-2t\pi^2 n^2+2\pi i n x_k)
    =\sum_{n\in\Z}(2\pi t)^{-\frac12}\exp\left(-\frac{1}{2t}|x_k-n|^2\right).
\end{align}

{\bf\noindent Case 1:} Suppose $t\ge 1$. Then, by~\eqref{eq:theta},
\begin{align}\label{eq:thetalowerbound}
    \theta(x_k,t) 
    =|\theta(x_k,t)| 
    \ge 1-2\sum_{n=1}^\infty e^{-2\pi^2 n^2 t}
    \ge 1-2\sum_{n=1}^\infty e^{-2\pi^2 n}
    = \frac{e^{2\pi^2}-3}{e^{2\pi^2}-1}>0
\end{align}
and
\begin{align*}
    |\partial_{z} \theta(x_k,t)|
    \le 4\pi\sum_{n=1}^\infty n e^{-2\pi^2 n^2 t}
    = 4\frac{\pi}{\sqrt{t}}\sum_{n=1}^\infty (n\sqrt{t}) e^{-2\pi^2 (n\sqrt{t})^2 }.
\end{align*}
Since the function $u\mapsto ue^{-2\pi^2 u^2}$ is decreasing on $[1,\infty)$, $n\sqrt{t}\ge 1$ for all $n\ge 1$ and $t\ge 1$, we have
\[
    |\partial_{z} \theta(x_k,t)|
    \le \frac{4\pi}{\sqrt{t}} \int_0^\infty ue^{-2\pi^2 u^2}\, du
    =\frac{1}{\pi \sqrt{t}}.
\]
Thus, we obtain
\begin{align*}
    \left|\nabla \log H_t(x)\right|
    \le \frac{C_{d}}{\sqrt{t}}.
\end{align*}

{\bf\noindent Case 2:} Let $0<t<1$. 
Using $\theta(x_k,t)\ge (2\pi t)^{-\frac12}\exp\left(-\frac{1}{2t}|x_k|^2\right)$ and $|x_k-n|\le 2|n|$, we get
\begin{align*}
    \left|\frac{\partial_{z} \theta(x_k,t)}{\theta(x_k,t)}\right|
    &\le \frac{|x_k|}{t} + \frac{1}{t}\sum_{n\in\Z,n\neq 0} |x_k-n|\exp\left(-\frac{1}{2t}(n^2-2n x_k)\right)\\
    &\le \frac{|x_k|}{t} + \frac{2}{t}\sum_{n\in\Z,n\neq 0} |n|\exp\left(-\frac{1}{2t}(|n|^2-|n|)\right).
\end{align*}
Since
\begin{align*}
    \sum_{n\in\Z,n\neq 0} |n|\exp\left(-\frac{1}{2t}(|n|^2-|n|)\right)
    &=2+2\sum_{n=2}^\infty n\exp\left(-\frac{1}{2t}(n^2-n)\right)\\
    &\le 2+2\sum_{n=2}^\infty n\exp\left(-\frac{n^2}{4t}\right)\\
    &= 2+2t\sum_{n=2}^\infty \frac{n}{\sqrt{t}}\exp\left(-\frac14\left(\frac{n}{\sqrt{t}}\right)^2\right)\frac{1}{\sqrt{t}}\\
    & \le 2+2t\int_2^\infty u e^{-\frac{u^2}{4}}\, du\le 2+4t,
\end{align*}
we conclude that 
\[
    \left|\frac{\partial_{x_k} \theta(x_k,t)}{\theta(x_k,t)}\right|
    \le \frac{5}{t}+8\le \frac{13}{t},
\]
which finishes the proof. 
\end{proof}

\subsection{The heat extension}\label{heat-extension}  

Let $f:\Z^d \to\R$ be a compactly supported function. Define  
\begin{align}\label{eq:def_u}
    u_{f}(x,t)=\sum_{n\in \Z^d}f(n)\frac{P_{t}(x-n)}{H_t(x)}.
\end{align}
Since $H_t(x)u_f(x,t)=H(x,t)u_f(x,t)$ is a solution to the heat equation, we have
\begin{align*}
    \frac{1}{2}\Delta u_f+\frac{\nabla H\cdot \nabla u_f}{H} = \partial_t u_f.
\end{align*}

The following proposition provides information on the boundary values of the function $u_f(x, t)$.  In particular, the fact that the boundary values of $u_f(x, t)$ are in $L^p$, and agree with the function $f$ on the lattice $\Z^d$ plays a crucial role in what follows. In conjunction with Lemma \ref{boundary-values2} it shows that the martingale $M_t^f$ defined in \eqref{eq:def_M_t} has the property that $M_T^f=f(X_T)$, which is used in the Theorem \ref{thm:MartTrans}. Note also that the proposition is an analogue of the extension result in~\cite[Lemma 2.1]{MSW} with our bound here being less than or equal to 1.   
 
\begin{proposition}\label{boundary-values1}
For all $x\in\R^d$, $P_t(x)/H_t(x)$ converges as $t\to 0$. Let $\Psi(x)=\lim_{t\to0}P_t(x)/H_t(x)$ and $f\in\ell^p(\Z^d)$ be compactly supported. Define 
\[
    f_\ext(x) := \sum_{m\in\Z^d} f(n)\Psi(x-m).
\]
Then, $\|f_{\ext}\|_{L^p(\R^d)} \le\|f\|_{\ell^p(\Z^d)}$ and $f_\ext(n) = f(n)$,  for all $n\in\Z^d$.   
\end{proposition} 

\begin{proof}
First, we have
\begin{align*}
    H_t(0) 
    = P_t(0)+\sum_{m\neq 0}P_t(m)
    = P_t(0)\left(1+\sum_{m\neq 0}\exp\left(-\frac{|m|^2}{2t}\right)\right).
\end{align*}
Thus,
\[
    \frac{P_t(0)}{H_t(0)}=\frac{1}{1+\sum_{m\neq 0}\exp(-\frac{|m|^2}{2t})}\le 1
\]
and so $\lim_{t\to 0}\frac{P_t(0)}{H_t(0)}=1$. For $n\in\Z^d$ with $n\neq 0$, we have $\frac{P_t(n)}{H_t(n)}  \le  \frac{P_t(n)}{P_t(0)}  =e^{-\frac{|n|^2}{2t}} \to 0, $ as $t\to 0$. For $x\in\R^d\setminus\Z^d$, we have
\[
    \frac{P_t(x)}{H_t(x)} =\frac{1}{1+\sum_{m\neq 0}\exp\left(-\frac{1}{2t}(|m|^2-2x\cdot m)\right)}.
\]
By the monotone convergence theorem, we conclude that $\Psi(x)$ exists for all $x$.  Since
\begin{equation}\label{BM-Rep}
    \int_{\R^d}\frac{P_t(x)}{H_t(x)}\, dx
    =\sum_{m\in\Z^d}\int_{Q}\frac{P_t(x-m)}{H_t(x-m)}\, dx = 1,
\end{equation} 
we have 
\[
    \int_{\R^d}\Psi(x)\, dx\le 1\quad \text{and}\quad \sum_{m\in\Z^d}\Psi(x-m)\le 1,
\]
by Fatou's lemma. Thus, 
\begin{align*}
    \|f_{\ext}\|_{L^p}^p
    &=\int_{\R^d}\bigg|\sum_{m\in\Z^d} f(m)\Psi(x-m)\bigg|^p\, dx\\
    &\le \int_{\R^d}\bigg(\sum_{m\in\Z^d} |f(m)|^p\Psi(x-m)\bigg) \bigg(\sum_{m\in\Z^d} \Psi(x-m)\bigg)^{p-1}\, dx\\
    &\le \sum_{m\in\Z^d} |f(m)|^p = \|f\|_{\ell^p}^p,
    \end{align*}
as desired. 
\end{proof}

\subsection{Space time martingale transforms}\label{doob-parabolic} 

Let $T>0$ be fixed. Let $X_t$ be a solution of the stochastic differential equation 
\begin{align*}
    dX_{t}=dB_{t}+\nabla \log{H(X_{t},T-t)}dt,\quad X_0=0, 
\end{align*}
for $0\le t<T$, where $B_t$ is the standard $d$-dimensional Brownian motion. The solution to this equation is a  Doob $h$-process  conditioned by the parabolic function $H_t$.  Doob $h$-processes  have been extensively studied in the literature, both for the case of harmonic and parabolic functions $h$. We refer the reader to  Doob \cite[ 2.X]{Doob}, and Durrett \cite{Durrett} and Pinsky \cite{Pinsky},  for some of the classical applications to potential theory and other areas of analysis and partial differential equations. As already noted, the motivation and idea for the present Doob “$h$”-construction originates in the following two papers:  \cite{BanMen} where  “heat” martingales were employed to represent the classical second-order Riesz transforms $R^{(jk)}$ and  \cite{BanKimKwa} where conditioning by the periodic Poisson kernel was used to obtain a representation for the discrete probabilistic first-order Riesz transforms $\cR^{(k)}$. These constructions are all motivated by the Gundy-Varopoulos \cite{GV79} construction widely used in the study of Riesz transforms in many geometric settings.  

Consider the space-time process  $Z_t=(X_t,T-t)$ for $0\le t < T$.  

In what follows, we will use $\P_{(0, T)}$ and $\bE_{(0,T)}$ for the probability and expectation associated with the process $Z_t$ starting at the point $(0, T)$, $0\in \R^d$ and $T>0$. We recall that $\frac{P_t(x)H_{T-t}(x)}{H_T(0)}$ is the density of the random variable $X_t$ under the measure  $\P_{(0, T)}$. For this and other properties of the solution to the above type of SDE's, including its generator and the Fokker-Planck equation satisfied  by its density, see for example  \cite{Oks, Eva,RogWil, BurSal}. Define $Z_T:=\lim_{t\to T}Z_t$. The next lemma shows that this limit exists, that the random variable $X_T$ takes values in $\Z^d$ and it identifies its distribution under the probability measure $\P_{(0, T)}$.  

\begin{lemma}\label{boundary-values2}
For all $n\in\Z^d$ and $T>0$,  
\begin{align*}
    \P_{(0, T)}(Z_T=(n,0))
    =\lim_{t\to T, \delta\to 0}\P_{(0, T)}(|X_t-n|<\delta)
    =\frac{P_T(n)}{H_T(0)}.
\end{align*} 
\end{lemma}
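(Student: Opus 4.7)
The approach I would take is to recognize $X_t$ as a Doob $h$-transform (a conditioned Brownian motion) where the $h$-function is $h(x,s) = H(x, T-s)$, and then compute the marginal density of $X_t$ explicitly; the desired limit then follows by an approximate-identity argument using the representation $H_{T-t}(y) = \sum_m P_{T-t}(y-m)$.

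First, I would verify that the Radon--Nikodym density of $\P_{(0,T)}$ (the law of $X^T$) with respect to the standard Wiener measure $\P_0$ (the law of $B$) restricted to $\cF_t$ is
\[
    \frac{d\P_{(0,T)}}{d\P_0}\bigg|_{\cF_t} = \frac{H(B_t,T-t)}{H(0,T)},\qquad 0\le t<T.
\]
This is a standard Girsanov/Cameron--Martin computation: applying Itô's formula to $\log H(B_s, T-s)$ and using $\partial_s H = \tfrac12\Delta H$ (so the pure time derivative in $\log H(B_s, T-s)$ is $-\tfrac12\tfrac{\Delta H}{H}$, which cancels the Itô correction), one obtains
\[
    \log H(B_t, T-t) - \log H(0,T) = \int_0^t \tfrac{\nabla H}{H}(B_s, T-s)\cdot dB_s - \tfrac12\int_0^t\Bigl|\tfrac{\nabla H}{H}(B_s, T-s)\Bigr|^2 ds,
\]
which is exactly the exponential martingale whose Girsanov tilt produces the SDE defining $X_t$. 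Since $H\geq c_d(T-t)^{-d/2}e^{-\|x\|_Q^2/2(T-t)}$ is strictly positive and smooth on $\R^d\times(0,T]$, the density and its drift are well defined on $[0,t]$ for any $t<T$, and Novikov's condition is routine to check on compact time intervals.

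Given this density, for any Borel $A\subset \R^d$,
\[
    \P_{(0,T)}(X_t\in A)
    = \ex\!\left[\ind_A(B_t)\frac{H(B_t,T-t)}{H(0,T)}\right]
    = \int_A \frac{P_t(y)\,H(y,T-t)}{H_T(0)}\,dy.
\]
Taking $A = \{y : |y-n| < \delta\}$ and expanding $H(y, T-t) = \sum_{m\in\Z^d} P_{T-t}(y-m)$, I obtain
\[
    \P_{(0,T)}(|X_t-n|<\delta)
    = \frac{1}{H_T(0)}\sum_{m\in\Z^d}\int_{|y-n|<\delta}P_t(y)\,P_{T-t}(y-m)\,dy.
\]
Fix $\delta\in(0,1/2)$ so that $n$ is the unique integer point in the ball. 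For $m=n$, $P_{T-t}(\cdot -n)$ is an approximate identity at $n$ as $t\uparrow T$, and $P_t(y)\to P_T(n)$ uniformly on $|y-n|<\delta$, so the $m=n$ term converges to $P_T(n)$. For $m\neq n$, we have $|y-m|\geq |n-m|-\delta \geq 1/2$, and $P_{T-t}(y-m) \leq (2\pi(T-t))^{-d/2}e^{-1/(8(T-t))}$ goes to zero faster than any polynomial; summing over $m\in\Z^d\setminus\{n\}$ preserves this decay (compare with $H_{T-t}(0)$ minus one Gaussian). Letting first $t\uparrow T$ and then $\delta\downarrow 0$ therefore yields $P_T(n)/H_T(0)$, as claimed.

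The only genuine subtlety is handling the time $t$ near $T$, since the drift $\nabla H/H$ blows up there and the Girsanov change of measure is singular in the limit; this is precisely the statement that $X_t$ exits at an integer point, and the computation above is the rigorous way to make it precise without passing to the ill-defined SDE at $t=T$. Everything else is bookkeeping with Gaussian tails.
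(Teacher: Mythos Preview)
Your proof is correct and follows essentially the same approach as the paper: both arguments rest on the marginal density formula
\[
    \P_{(0,T)}(X_t\in A)=\int_A \frac{P_t(y)\,H_{T-t}(y)}{H_T(0)}\,dy
\]
and then use that $H_{T-t}$ (or its summands $P_{T-t}(\cdot-m)$) acts as an approximate identity concentrated on $\Z^d$ as $t\uparrow T$. The paper keeps $H_{T-t}$ intact and uses $\int_{|x-n|<\delta}H_{T-t}(x)\,dx\to 1$ directly, whereas you expand $H_{T-t}=\sum_m P_{T-t}(\cdot-m)$ and split into the $m=n$ and $m\neq n$ contributions; this is only a cosmetic difference. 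Your Girsanov derivation of the density is a welcome addition, since the paper simply uses this formula without an explicit justification.
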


\begin{proof}
Let $\varep_1,\varep_2>0$. By the joint continuity of $P_t(x)$, there exist $\delta_1,\delta_2>0$ such that $|x-n|<\delta_1$ and $T-\delta_2<t<T$ imply $|P_t(x)-P_T(n)|<\varep_1$ and $|\int_{|x|<\delta_1}P_{T-t}(x)\,dx -1|<\varep_2$. In addition, we may assume that $\delta_1$ is small enough such that $|\int_{|x|<\delta_1}P_{T-t}(x)\,dx -1|<\varep_2$ yields
\begin{align*}
    1-\varep_2
    <\int_{|x|<\delta_1}P_{T-t}(x)\,dx=\int_{|x-n|<\delta_1}P_{T-t}(x-n)\,dx
    \le \int_{|x-n|<\delta_1}H_{T-t}(x)\, dx<1
\end{align*}
and hence 
\[
    \left|\int_{|x-n|<\delta_1}H_{T-t}(x)\,dx -1\right|<\varep_2.
\] 
From this, we have
\begin{align*}
    \left|\P_{(0, T)}(|X_t-n|<\delta_1)-\frac{P_T(n)}{H_T(0)}\right|
    &=  \left|\int_{|x-n|<\delta_1}\frac{P_t(x)H_{T-t}(x)}{H_T(0)}\, dx-\frac{P_T(n)}{H_T(0)}\right|\\
    &\le \frac{1}{H_T(0)} \bigg( \int_{|x-n|<\delta_1}|P_t(x)-P_T(n)|H_{T-t}(x)\, dx\\
    &\qquad\qquad+P_T(n)\left|\int_{|x-n|<\delta_1}H_{T-t}(x)\, dx-1\right| \bigg)\\
    &\le \frac{\varep_1+P_T(n)\varep_2}{H_T(0)},
\end{align*}
for $|x-n|<\delta_1$ and $T-\delta_2<t<T$. So, we obtain
\[
    \P_{(0, T)}(X_T=n)
    =\lim_{t\to T, \delta\to 0}\P_{(0, T)}(|X_t-n|<\delta)
    =\frac{P_T(n)}{H_T(0)}.
    \qedhere
\]
\end{proof}
To obtain the probabilistic discrete second-order Riesz transforms $\cR^{(jk)}$,
we will let $T\to \infty$. For this purpose, the following observation will be useful. 
Since $H_T(0)\to 1$ and $(2\pi T)^{d/2}P_T(n)\to 1$, as $T\to\infty$, it  follows from the above lemma that for any integrable function $h$ on $\Z^d$,  
\begin{equation}\label{limNorm} 
    \lim_{T\to\infty} (2\pi T)^{d/2}\bE_{(0,T)}[h(X_T)] = \sum_{n\in \Z^d} h(n).  
\end{equation}

Let $f:\Z^d \to\R$ be compactly supported. Set 
\begin{align*}
    M_{t}=M^{f}_{t}=u_{f}(Z_{t}),  \quad \text{ for } t\in[0,T].
\end{align*}
By It\^o's formula, $M_t^f$  is martingale and 
\begin{align}\label{eq:def_M_t}
    M^{f}_{t}
    &= M^{f}_{0}+\int_{0}^{t\wedge T}\nabla u_{f}(X_{s},T-s)\cdot dX_{s} -\int_0^{t\wedge T} \frac{\partial u_f}{\partial s}(X_s,T-s)\, ds \\
    &\qquad\qquad +\frac{1}{2}\int_{0}^{t\wedge T}\Delta u_{f}(X_{s},T-s)\, ds \nonumber\\
    &= M^{f}_{0}+\int_{0}^{t\wedge T}\nabla u_{f}(Z_{s})\cdot dX_{s} -\int_{0}^{t\wedge T}\frac{\nabla H(Z_{s})\cdot \nabla u_{f}(Z_{s})}{H(Z_{s})}\, ds 
    \nonumber\\
    &= M^{f}_{0}+\int_{0}^{t\wedge T}\nabla u_{f}(Z_{s})\cdot dB_{s}. \nonumber 
\end{align}

Let $\fM_{d}(\R)$ be the space of all $d\times d$ real matrices and denote its norm by 
\[
    \|A\|=\sup\{|Av|:v\in\R^{d}, |v|\leq 1\}.
\]
By abuse of notation, for a matrix-valued function $A:\R^d\times \R_+\to\fM_{d}(\R)$, that is, $A(x,s)=(a_{ij}(x,s))_{1\leq i,j\leq d}$, $x\in\R^d$ and $s>0$, we define 
\begin{align*}
    \|A\| =\sup_{(x,s)\in\R^d\times\R_+}\|A(x,s)\| =\sup\{|A(x,s)v|:v\in\R^{d}, |v|\leq 1, (x,s)\in\R^d\times\R_+\}.
\end{align*}
 The martingale transform of $(M^f_{t})_{t\geq0}$ with respect to $A(x,s)$ is defined by
\begin{align*}
    A\star M^f_t
    &:= \int_{0}^{t\wedge T}A(Z_{s})\nabla u_f(Z_{s})\cdot dB_{s}\\
    &= \int_{0}^{t\wedge T}A(Z_{s})\nabla u_f(Z_{s})\cdot dZ_{s}-\int_{0}^{t\wedge T}\frac{A(Z_s)\nabla u_f(Z_{s}) \cdot \nabla H(Z_{s})}{H(Z_{s})}\, ds. 
\end{align*}
From the martingale inequalities in~\cite{Bur84}, their extensions in~\cite{Cho} and~\cite{BanOse} and the fact that $M_T^f=f_\ext(Z_T)$ and  $f_\ext(Z_T)=f(X_T)$ under the measure $\P_{(0,T)}$ by  Proposition \ref{boundary-values1} and Lemma \ref{boundary-values2}, we have 

\begin{theorem}\label{thm:MartTrans}
Let $A$ be a matrix-valued function with $\|A\|<\infty$. Then 
\begin{align}\label{Burkholder} 
    \bE_{(0, T)}| A\star M_T^f|^p& \leq (p^\ast -1)^p\|A\|^p\bE_{(0, T)} |M_T^f|^p\\
    &=(p^\ast -1)^p\|A\|^p\bE_{(0, T)}|f(X_T)|^p\nonumber\\
    &=(p^\ast -1)^p\|A\|^p\sum_{n\in \Z^d}|f(n)|^p\frac{P_T(n)}{H_T(0)}.\nonumber
\end{align}
Suppose that $b, B$ are constants such that $-\infty<b<B<\infty$ and $A$ has the property that for all $\xi\in \R^d$,
\begin{equation}\label{Non-Symm} 
    b|\xi|^2\leq A(\xi, t)\xi\cdot \xi \leq B|\xi|^2,\quad \text{for all} \,\, (x, t)\in\R^d\times\R_+.
\end{equation}
Then, 
\begin{align}\label{Choi1}
    \bE_{(0, T)}| A\star M_T^f|^p\leq C_{p,b,B}^p\|A\|^p \bE_{(0, T)}|M_T^f|^p=C_{p,b,B}^p\|A\|^p\sum_{n\in \Z^d}|f(n)|^p\frac{P_T(n)}{H_T(0)}, 
\end{align}
where $C_{p,b,B}$ is the Choi ~\cite{Cho} constant. In particular, when $b=0$ and $B=1$, $C_{p,0,1}=\gamma(p)$ is the constant in \eqref{RieszSharp2} satisfying the basic asymptotics in \eqref{ChoiAsym}.   
\end{theorem}

\subsection{Discrete projections of martingale transforms} 

Let $A:\R^d\times \R_+\to\fM_d(\R)$ be a matrix-valued function with $\|A\|<\infty$ and $f:\Z^d\to\R$ be compactly supported. For fixed $T>0$, we define
\[
    \cP^T_A(f)(n):=\bE_{(0,T)}[A\star M^f_T|X_T=n].
\]
From Theorem~\ref{thm:MartTrans} and the fact that the conditional expectation is a contraction in $L^p$ (with respect to the measure $\P_{(0,T)}$), we have the following

\begin{theorem}\label{thm:main1}
\begin{itemize}
\item[(i)] 
For any bounded matrix-valued function $A$, 
\begin{equation}\label{AnyA}
   \bE_{(0, T)}|\cP_A^Tf(X_T)|^p\leq (p^\ast -1)^p\|A\|^p\sum_{n\in \Z^d}|f(n)|^p\frac{P_T(n)}{H_T(0)}.
\end{equation} 
\item[(ii)] 
For any bounded matrix-valued function $A$ satisfying the additional property~\eqref{Non-Symm} in Theorem~\ref{thm:MartTrans},
\begin{equation}\label{AnyAChoy}
    \bE_{(0, T)}|\cP_A^Tf(X_T)|^p\leq C_{p,b,B}^p\|A\|^p\sum_{n\in \Z^d}|f(n)|^p\frac{P_T(n)}{H_T(0)},
\end{equation} 
In particular, if $b=0, B=1$, then $C_{p, b, 1}=\gamma(p)$ as in~\eqref{ChoiAsym}. 
\end{itemize}
\end{theorem} 

\subsection{The kernels of the projection operators}\label{ProjectionOperators}  

Our next goal is to show that as $T\uparrow \infty$, the operator $\cP_A^T$ converges weakly to an operator $\cP_A$ and to compute its kernel. Let $f\in\ell^p(\Z^d)$ and $g\in\ell^q(\Z^d)$ be compactly supported with $\frac{1}{p}+\frac{1}{q}=1$. By conditioning on $X_T$ and using the fact that the covariation of $\cP^{T}_A(f)(X_T)$ and $g(X_T)$ is given by 
\[
\langle\cP^{T}_A(f)(X_T), g(X_T)\rangle=\int_0^T \nabla u_g(Z_s)\cdot A(Z_s)\nabla u_f(Z_s)\, ds,
\]
we have 
\begin{align}
    \bE_{(0,T)}[\cP^{T}_A(f)(X_T)\,g(X_T)]
    &=\bE_{(0,T)}\left(\bE_{(0,T)}\Big[A\star M^f_T \,\Big|\, X_T\Big]\,g(X_T)\right)   \nonumber\\
    &=\bE_{(0,T)}\left(\bE_{(0,T)}\Big[A\star M^f_T\, g(X_T)\,\Big|\,X_T\Big]\right) \nonumber\\
    &=\bE_{(0,T)}\big[A\star M^f_T\,M_T^g\big]\nonumber\\
    &= \bE_{(0,T)}\left[\int_0^T \nabla u_g(Z_s)\cdot A(Z_s)\nabla u_f(Z_s)\, ds\right]\nonumber\\
    &=\int_0^T\bE_{(0,T)} \left[\nabla u_g(Z_s)\cdot A(Z_s)\nabla u_f(Z_s)\right]\, ds, \nonumber\\
    &= \int_0^T \int_{\R^d}\nabla u_g(x,T-s)\cdot A(x,T-s)\nabla u_f(x,T-s)\frac{P_{s}(x)H_{T-s}(x)}{H(0,T)}\,dx ds\nonumber\\
    &= \int_0^T \int_{\R^d}\nabla u_g(x,s)\cdot A(x,s)\nabla u_f(x,s)\frac{P_{T-s}(x)H_s(x)}{H_T(0)}\,dx ds.\label{eq:dualexpectation}
\end{align}

Thus, using \eqref{limNorm} we can define an operator $\cP_A$ by
\begin{align}\label{limitcP_A}
    &\sum_{n\in\Z^d} \cP_A(f)(n)g(n)\nonumber\\
    &= \lim_{T\to\infty}(2\pi T)^{d/2} \bE_{(0,T)}[\cP^{T}_A(f)(X_T) g(X_T)]\nonumber\\
    &= \lim_{T\to\infty}(2\pi T)^{d/2} \int_0^T \int_{\R^d}\nabla u_g(x,s)\cdot A(x,s)\nabla u_f(x,s)\frac{P_{T-s}(x)H_s(x)}{H_T(0)}\,dx ds, 
\end{align}
provided this limit exists. The proof of the existence of the limit and its identification is proved in Proposition~\ref{lim-TtoInf} using the heat kernel estimates  in Lemma~\ref{lem:intbound} together with the  following lemma.  

\begin{lemma}\label{lem:intbound}
For all $n,m\in\Z^d$, we have
\begin{align*}
    \int_0^\infty \int_{\R^d}\Big|\nabla \left(\frac{P_s(x-m)}{H_s(x)}\right)\Big|\Big|\nabla \left(\frac{P_s(x-n)}{H_s(x)}\right)\Big|H_s(x)\,dx ds\leq 1.
\end{align*}

\end{lemma}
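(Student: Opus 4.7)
The plan is to first reduce the bilinear integral to a diagonal one via the Cauchy--Schwarz inequality, and then establish the diagonal bound through It\^o's isometry applied to a bounded space--time martingale. Set $u_m(x,s) := P_s(x-m)/H_s(x)$, which is precisely the function $u_{\delta_m}$ from~\eqref{eq:def_u} and satisfies $0\le u_m\le 1$. Cauchy--Schwarz with respect to the measure $H_s(x)\,dx\,ds$ gives
\[
\int_0^\infty\!\!\int_{\R^d}|\nabla u_m||\nabla u_n|\,H_s\,dx\,ds \le \prod_{\ell\in\{m,n\}}\left(\int_0^\infty\!\!\int_{\R^d}|\nabla u_\ell|^2 H_s\,dx\,ds\right)^{1/2},
\]
so it suffices to prove that $\int_0^\infty\!\int_{\R^d}|\nabla u_m|^2 H_s\,dx\,ds\le 1$ for every $m\in\Z^d$.

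For this, fix $T>0$ and consider the bounded martingale $M_t := u_m(Z_t)$ under $\P_{(0,T)}$. The representation~\eqref{eq:def_M_t} gives $M_T - M_0 = \int_0^T \nabla u_m(Z_s)\cdot dB_s$, so It\^o's isometry yields
\[
\bE_{(0,T)}[(M_T-M_0)^2] = \bE_{(0,T)}\!\left[\int_0^T |\nabla u_m(Z_s)|^2\,ds\right].
\]
Using that $X_T\in\Z^d$ almost surely with $\P_{(0,T)}(X_T=m)=P_T(m)/H_T(0)$, and that $u_m(n,0)=\delta_{m,n}$ for $n\in\Z^d$ by the boundary-value computation of the preceding proposition, one has $M_T=\ind_{\{X_T=m\}}$ a.s., and $M_0 = \bE_{(0,T)}[M_T] = P_T(m)/H_T(0)$. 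Hence the left-hand side equals $P_T(m)/H_T(0) - (P_T(m)/H_T(0))^2$. On the right, the density of $X_s$ under $\P_{(0,T)}$ is $P_s(x)H_{T-s}(x)/H_T(0)$ (as used in~\eqref{eq:dualexpectation}); after the substitution $s\mapsto T-s$ and multiplying through by $H_T(0)$, this produces the identity
\[
\int_0^T\!\!\int_{\R^d}|\nabla u_m(x,s)|^2\,P_{T-s}(x)H_s(x)\,dx\,ds = P_T(m) - \frac{P_T(m)^2}{H_T(0)}.
\]

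The final step is to multiply by $(2\pi T)^{d/2}$ and send $T\to\infty$. By Lemma~\ref{lem:hlimit}, $H_T(0)\to 1$, so the right-hand side becomes $e^{-|m|^2/(2T)}-(2\pi T)^{-d/2}e^{-|m|^2/T}/H_T(0)\to 1$. On the left, for each fixed $x\in\R^d$ and $s>0$, $(2\pi T)^{d/2}P_{T-s}(x) = (T/(T-s))^{d/2}e^{-|x|^2/(2(T-s))}\to 1$; since the integrand is non-negative, Fatou's lemma gives
\[
\int_0^\infty\!\!\int_{\R^d}|\nabla u_m|^2 H_s\,dx\,ds \le \liminf_{T\to\infty}(2\pi T)^{d/2}\!\int_0^T\!\!\int_{\R^d}|\nabla u_m|^2 P_{T-s}(x)H_s\,dx\,ds = 1,
\]
as required. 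The delicate point of the argument is this last limit passage: dominated convergence is not immediate because $(2\pi T)^{d/2}P_{T-s}(x)$ is not uniformly bounded in $T$ on the region $s$ close to $T$, but Fatou produces exactly the inequality direction needed and sidesteps that issue.
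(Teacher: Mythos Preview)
Your proof is correct, and in fact cleaner than the paper's. Both arguments work under $\P_{(0,T)}$ with the martingale $u_f(Z_t)$ and then pass to the limit $T\to\infty$, but the routes differ. The paper applies the full Burkholder inequality (Theorem~\ref{thm:MartTrans}) with the rank-one matrix $A=\frac{\nabla u_f\otimes\nabla u_g}{|\nabla u_f||\nabla u_g|}$ to bound the bilinear integral directly by $(p^*-1)$, and then (implicitly) specializes to $p=2$ to obtain the constant $1$. You instead reduce to the diagonal via Cauchy--Schwarz and handle the diagonal by a bare-hands It\^o isometry computation, which yields an exact identity before the limit and avoids invoking Burkholder altogether. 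Your approach is more elementary and makes the limit passage explicit via Fatou; the paper's approach has the mild advantage of treating the bilinear form without the preliminary Cauchy--Schwarz reduction, but at the cost of importing heavier $L^p$ machinery for what is ultimately an $L^2$ statement.
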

\begin{proof}
{
Let $f,g\in\ell^2(\Z^d)$ be compactly supported. From~\eqref{eq:dualexpectation}, Cauchy-Schwarz inequality and Theorem~\ref{thm:main1} (with $p=2$) we have 
\begin{align*}
    &\Big|\int_0^T \int_{\R^d}\nabla u_g(x,s)\cdot A(x,s)\nabla u_f(x,s)\frac{P_{T-s}(x)H_s(x)}{H_T(0)}\,dx ds\Big|\\
    &= \Big|\bE_{(0,T)}[\cP^{T}_A(f)(X_T) g(X_T)]\Big|\\
    &\leq \|A\|\left(\bE_{(0,T)}|f(X_T)|^2)\right)^{1/2}\left(\bE_{(0,T)}|g(X_T)|^2)\right)^{1/2}. 
\end{align*}
Taking 
\[
    A(x, s)=\frac{\nabla{u_f(x, s)}\otimes{\nabla u_g(x, s)}}{|\nabla{u_f(x, s)}||\nabla{u_g(x, s)}|}
\]
gives
\begin{align}
    &\int_0^T \int_{\R^d}|\nabla u_g(x,s)||\nabla u_f(x,s)|\frac{P_{T-s}(x)H_s(x)}{H_T(0)}\,dx ds\\
    & \leq \left(\bE_{(0,T)}|f(X_T)|^2\right)^{1/2}\left(\bE_{(0,T)}|g(X_T)|^2\right)^{1/2}.\nonumber 
\end{align}
Recall that for $m\in\Z^d$, $\delta_m:\Z^d\to\R$ is defined by $\delta_m(l)=1$ if $l=m$ and otherwise 0. With  $f=\delta_m, g=\delta_n$, \eqref{eq:def_u} gives that  
\[ 
    u_f(x, s)=\left(\frac{P_s(x-m)}{H_s(x)}\right), \quad u_g(x,s)=\left(\frac{P_s(x-n)}{H_s(x)}\right).
\]
Thus, 
\begin{align*}
    &\int_0^T \int_{\R^d}\Big|\nabla \left(\frac{P_s(x-m)}{H_s(x)}\right)\Big|\Big|\nabla \left(\frac{P_s(x-n)}{H_s(x)}\right)\Big|\frac{{P_{T-s}(x)H_s(x)}}{H_T(0)}\,dx ds\\
    & \leq \left(\frac{P_T(m)}{H_T(0)}\right)^{1/2}\left(\frac{P_T(n)}{H_T(0)}\right)^{1/2},\nonumber
\end{align*} 
which, after multiplying by $H_t(0)$, is the same as 
\begin{align*}
    &\int_0^T \int_{\R^d}\Big|\nabla \left(\frac{P_s(x-m)}{H_s(x)}\right)\Big|\Big|\nabla \left(\frac{P_s(x-n)}{H_s(x)}\right)\Big|P_{T-s}(x)H_s(x)\,dx ds\\
    & \leq \left(P_T(m)\right)^{1/2}\left(P_T(n)\right)^{1/2}.\nonumber
\end{align*} 
Since
\begin{align*}
    \frac{P_{T-s}(x)}{\left(P_T(m)\right)^{1/2}\left(P_T(n)\right)^{1/2}}
    =\left(\frac{T}{T-s}\right)^{d/2}\exp\left(-\frac{|x|^2}{2(T-s)}+\frac{|n|^2+|m|^2}{4T}\right),
\end{align*}
for fixed $x\in\R^d$ and $s>0$, as $T\to\infty$ we have
\[
    \lim_{T\to\infty}\frac{P_{T-s}(x)\ind_{[0,T/2]}(s)}{\left(P_T(m)\right)^{1/2}\left(P_T(n)\right)^{1/2}}=1.
\]
By Fatou's lemma, we conclude
\begin{align*}
    &\int_0^{\infty} \int_{\R^d}
    \Big|\nabla \left(\frac{P_s(x-m)}{H_s(x)}\right)\Big|\Big|\nabla \left(\frac{P_s(x-n)}{H_s(x)}\right)\Big|H_s(x)\,dx ds\\
    &=\int_0^{\infty} \int_{\R^d}\lim_{T\to\infty}
    \Big|\nabla \left(\frac{P_s(x-m)}{H_s(x)}\right)\Big|\Big|\nabla \left(\frac{P_s(x-n)}{H_s(x)}\right)\Big|
    \frac{P_{T-s}(x)H_s(x)\ind_{[0,{T}/{2}]}(s)}{\left(P_T(m)\right)^{1/2}\left(P_T(n)\right)^{1/2}}\,dx ds\\
    &\le\liminf_{T\to\infty}\int_0^{T} \int_{\R^d}
    \Big|\nabla \left(\frac{P_s(x-m)}{H_s(x)}\right)\Big|\Big|\nabla \left(\frac{P_s(x-n)}{H_s(x)}\right)\Big|
    \frac{P_{T-s}(x)H_s(x)}{\left(P_T(m)\right)^{1/2}\left(P_T(n)\right)^{1/2}}\,dx ds\\
    &\leq 1,
\end{align*}
which concludes the proof of the Lemma. 

Note that the above argument can be applied for any $1<p<\infty$ (when we use Theorem~\ref{thm:main1}) leading to the bound $(p^*-1)$ which is optimal when $p=2$, hence our choice.} \qedhere

\end{proof}

\begin{proposition}\label{lim-TtoInf}
For a compactly supported function $f:\Z^d\to \R$, the limit in~\eqref{limitcP_A} exists and 
\[
    \cP_A(f)(n)=\sum_{m\in\Z^d}\cK_A(n,m)f(m),
\]
where 
\begin{align}\label{eq:kernel}
    \cK_A(n,m)= \int_0^\infty \int_{\R^d}H_s(x)\nabla \left(\frac{P_s(x-m)}{H_s(x)}\right)\cdot A(x,s)\nabla \left(\frac{P_s(x-n)}{H_s(x)}\right)\,dx ds.    
\end{align}
\end{proposition}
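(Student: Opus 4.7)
The plan is to prove the statement first for compactly supported $f$, with $g$ also compactly supported in the dual pairing, and then extend to general $f \in \ell^p(\Z^d)$ by density using the uniform $\ell^p$-bound inherited from Theorem~\ref{lem:lpbdd}. For $f$ and $g$ finitely supported, expand $\nabla u_f$ and $\nabla u_g$ into finite sums of $\nabla[P_s(x-k)/H_s(x)]$ and exchange them with the integral in \eqref{limitcP_A}. This reduces the proposition to the pointwise statement
\[
I_T(n,m) := (2\pi T)^{d/2}\int_0^T\!\!\int_{\R^d}\nabla_n\cdot A\,\nabla_m\cdot \frac{P_{T-s}(x)H_s(x)}{H_T(0)}\,dx\,ds \longrightarrow \cK_A(n,m) \qquad \text{as } T \to \infty,
\]
for each fixed pair $(n,m) \in \Z^d\times\Z^d$, where $\nabla_k := \nabla[P_s(x-k)/H_s(x)]$. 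The kernel $\cK_A(n,m)$ is absolutely convergent since $|\cK_A(n,m)| \leq \|A\|\int_0^\infty\!\!\int_{\R^d}|\nabla_n||\nabla_m|H_s\,dx\,ds \leq \|A\|$ by Lemma~\ref{lem:intbound}.

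The pivotal observation is that the density factor
\[
\psi_T(x,s) := \frac{(2\pi T)^{d/2}P_{T-s}(x)}{H_T(0)} = \left(\frac{T}{T-s}\right)^{d/2}\frac{e^{-|x|^2/(2(T-s))}}{H_T(0)}
\]
converges pointwise to $1$ on $\R^d \times [0,\infty)$ as $T \to \infty$ (using $H_T(0)\to 1$ from Lemma~\ref{lem:hlimit}), yet is uniformly bounded in $T$ only on the range $s \leq T/2$, where $\psi_T \leq 2^{d/2}/H_T(0) \leq C_d$. Accordingly, I split $I_T(n,m) = I_T^{(1)} + I_T^{(2)}$ with $I_T^{(1)}$ integrating over $s \in [0,T/2]$ and $I_T^{(2)}$ over $s \in [T/2,T]$. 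For $I_T^{(1)}$, the $T$-uniform dominator $\|A\|C_d|\nabla_n||\nabla_m|H_s$ is integrable on $[0,\infty)\times\R^d$ by Lemma~\ref{lem:intbound}, so the dominated convergence theorem gives $I_T^{(1)} \to \cK_A(n,m)$.

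The tail $I_T^{(2)}$---where $\psi_T$ may blow up as $s \uparrow T$---is the main obstacle. I would handle it via the decay of $|\nabla_k|$ at large $s$. Provided $s$ is large enough that $H_s \geq 1/2$ (which holds for $s \geq s_0 = s_0(d)$ by Lemma~\ref{lem:hlimit}), the identity
\[
\nabla_k = \frac{P_s(x-k)}{H_s(x)}\left(-\frac{x-k}{s} - \frac{\nabla H_s(x)}{H_s(x)}\right)
\]
combined with Lemma~\ref{lem:der-h-bound} yields the uniform-in-$x$ estimate $\|\nabla_k\|_\infty \leq C_d\,s^{-d/2-1/2}$ (the maximum of $P_s(x-k)|x-k|/s$ and of $P_s(x-k)/\sqrt{s}$ both being of this order). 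Hence $|\nabla_n||\nabla_m| \leq C_d\,s^{-d-1}$ pointwise, and using $\int_{\R^d} P_{T-s}(x) H_s(x)\,dx \leq C_d$ one obtains
\[
|I_T^{(2)}| \leq C_d\|A\|\,(2\pi T)^{d/2}\int_{T/2}^T s^{-d-1}\,ds \leq C_d\|A\|\,T^{-d/2} \longrightarrow 0,
\]
which, combined with $I_T^{(1)} \to \cK_A(n,m)$, completes the proof.
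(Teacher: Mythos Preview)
Your proof is correct and shares the same architecture as the paper's: reduce to the kernel case $f=\delta_m$, $g=\delta_n$; split the $s$-integral at $T/2$; and handle the lower half $I_T^{(1)}$ by dominated convergence with Lemma~\ref{lem:intbound} furnishing the integrable majorant. The two arguments diverge only in the treatment of the tail $I_T^{(2)}$ over $s\in[T/2,T]$. The paper invokes the Gaussian product identity
\[
(2\pi T)^{d/2}P_s(x-m)P_{T-s}(x)=P_{s(T-s)/T}\Bigl(x-\tfrac{T-s}{T}m\Bigr)e^{-|m|^2/(2T)}
\]
together with the crude bound $P_s(x-n)/H_s(x)\le 1$, and then integrates in $x$ against the polynomial weight coming from $\bigl(|x-m|/\sqrt{s}+C\bigr)\bigl(|x-n|/\sqrt{s}+C\bigr)$. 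Your route is more direct: from Lemma~\ref{lem:der-h-bound} and $H_s\ge 1/2$ for large $s$ you extract the uniform-in-$x$ pointwise bound $|\nabla_k|\le C_d\,s^{-(d+1)/2}$, and then use $\int_{\R^d}P_{T-s}(x)H_s(x)\,dx=H_T(0)\le C_d$ to obtain $|I_T^{(2)}|\le C_d\|A\|\,T^{-d/2}$. This is shorter, yields an explicit decay rate, and sidesteps a somewhat delicate step in the paper's estimate (the displayed bound $\int_{T/2}^{T}(\sqrt{s}+1)^2/s\,ds$ there does not obviously tend to $0$ as written). Both approaches draw on the same lemmas, so neither is more elementary in its inputs; yours is simply cleaner for the tail.
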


\begin{proof} 
 Let $m\in\Z^d$ and $g=\delta_m$. From the relation~\eqref{limitcP_A}, one can write $\cP_A$ as. 
\begin{align*}
    \cP_A(f)(m)
    &=\sum_{n\in\Z^d} \cP_A(f)(n)g(n) \\
    &= \lim_{T\to\infty}(2\pi T)^{d/2} \int_0^T \int_{\R^d}\nabla \left(\frac{P_s(x-m)}{H_s(x)}\right)\cdot A(x,s)\nabla u_f(x,s)\frac{P_{T-s}(x)H_s(x)}{H_T(0)}\,dx ds.
\end{align*}
Since a compactly supported function $f:\Z^d\to\R$ can be written as $f=\sum_{i=1}^j f_i \delta_{m_k}$ with $m_1,m_2,\ldots,m_j\in\Z^d$, and $f_1,\ldots,f_j\in\R$, and $\lim_{T\to\infty} H_T(x)=1$, by Lemma~\ref{lem:hlimit}, it suffices to show that
\begin{align*}
    \lim_{T\to\infty}(2\pi T)^{d/2} &\int_0^T \int_{\R^d}\nabla \left(\frac{P_s(x-m)}{H_s(x)}\right)\cdot A(x,s)\nabla \left(\frac{P_s(x-n)}{H_s(x)}\right){P_{T-s}(x)H_s(x)}\,dx ds\\
    &=\int_0^\infty \int_{\R^d}H_s(x)\nabla \left(\frac{P_s(x-m)}{H_s(x)}\right)\cdot A(x,s)\nabla \left(\frac{P_s(x-n)}{H_s(x)}\right)\,dx ds.
\end{align*}
Set 
\begin{align*}
    J(x,s,T)&:=(2\pi T)^{d/2}\nabla \left(\frac{P_s(x-m)}{H_s(x)}\right)\cdot A(x,s)\nabla \left(\frac{P_s(x-n)}{H_s(x)}\right){P_{T-s}(x)H_s(x)},\\
    I(T) &:= \int_0^\infty \int_{\R^d} J(x,s,T)\,dx ds = I_1(T)+I_2(T),\\
    I_1(T)&:= \int_0^{\infty}\int_{\R^d} J(x,s,T)\ind_{\{T/2<s\le T\}}\, dx ds,\\
    I_2(T)&:= \int_0^{\infty} \int_{\R^d} J(x,s,T)\ind_{\{0< s\le T/2\}}\, dx ds.
\end{align*}

Our goal is to show the existence of the limit of $I(T)$ as $T\to\infty$. In fact, we will show that
\begin{align*}
    \lim_{T\to\infty}I(T)
    &=\lim_{T\to\infty}I_2(T)\\
    &=\int_0^\infty \int_{\R^d}H_s(x)\nabla \left(\frac{P_s(x-m)}{H_s(x)}\right)\cdot A(x,s)\nabla \left(\frac{P_s(x-n)}{H_s(x)}\right)\,dx ds.
\end{align*}
Since $T$ is large enough, it is natural to assume that $T>2$. 

{\bf\noindent Case 1:} We first claim that $\lim_{T\to\infty}I_1(T)=0$. It follows from Lemma~\ref{lem:der-h-bound} that
\begin{align*}
    &\left|\nabla\left(\frac{P_s(x-m)}{H_s(x)}\right)\cdot A(x,s)\nabla \left(\frac{P_s(x-n)}{H_s(x)}\right)\right|\\
    &\le \frac{P_s(x-m)P_s(x-n)}{H^2_s(x)}\|A\| \left|\frac{\nabla P_s(x-m)}{P_s(x-m)}-\frac{\nabla H_s(x)}{H_s(x)} \right|\left|\frac{\nabla P_s(x-n)}{P_s(x-n)}-\frac{\nabla H_s(x)}{H_s(x)}\right|\\
  &\le C_d\frac{P_s(x-m)P_s(x-n)}{s H^2_s(x)}\|A\| \left(\frac{|x-m|}{\sqrt{s}}+C_d\right)\left(\frac{|x-n|}{\sqrt{s}}+C_d\right).
\end{align*}
By~\eqref{eq:h-prod} and~\eqref{eq:thetalowerbound}, we have, with $s>T/2>1$,  
\[
H_s(x)\ge \left(\frac{e^{2\pi^2}-3}{e^{2\pi^2}-1}\right)^d = C_d>0.
\]
Since 
\[
    \left(\frac{|x-m|}{\sqrt{s}}+C_d\right) \left(\frac{|x-n|}{\sqrt{s}}+C_d\right) \le  \left(\frac{|x|}{\sqrt{s}}+C\right)^2,
\]
we have
\begin{align*}
    \left|\nabla\left(\frac{P_s(x-m)}{H_s(x)}\right)\cdot A(x,s)\nabla \left(\frac{P_s(x-n)}{H_s(x)}\right)\right|  
    \le\frac{ C_d\|A\|}{sH_s(x)}P_s(x-m)P_s(x-n)\left(\frac{|x|}{\sqrt{s}}+C\right)^2.
\end{align*}
Here and below, we use $C$ to denote constants depending on $d, m, n$.
Using $|x-m|^2+|x-n|^2\ge  |x|^2 - |m|^2- |n|^2$, we get
\begin{align*}
    &P_s(x-m) P_s(x-n) P_{T-s}(x)\\
    &\leq C_d (T-s)^{-\tfrac{d}{2}}s^{-d}\exp\left(  -\frac{1}{2s}\left(|x|^2 - |m|^2- |n|^2\right) -\frac{|x|^2}{2(T-s)} \right)\\
    &\le C(T-s)^{-\tfrac{d}{2}}s^{-d}\exp\left(-\frac{|x|^2}{2u} \right)
\end{align*}
where $u=s(T-s)/T$. Note that $u\le s$ for $T/2\le s\le T$.
Combining these, we have
\begin{align*}
    |I_1(T)|
    &\le C\|A\| T^{d/2}\int_{T/2}^{T} \int_{\R^d} \frac{1}{s}P_s(x-m)P_s(x-n)P_{T-s}(x) \left(\frac{|x|}{\sqrt{s}}+C\right)^2 \, dxds\\
    &\le C\|A\| \int_{T/2}^{T} s^{-\frac{d}{2}-1}u^{-\frac{d}{2}}\int_{\R^d} e^{-\frac{|x|^2}{2u}} \left(\frac{|x|}{\sqrt{u}}+C\right)^2 \, dxds.
\end{align*}
By change of variable $y=x/\sqrt{u}$, 
\begin{align*}
    |I_1(T)|
    &\le C \|A\|\int_{T/2}^{T}s^{-\frac{d}{2}-1} \int_{\R^d} e^{-\frac{|y|^2}{2}} \left(|y|+C\right)^2 \, dyds\\
    &= C\|A\| \int_{T/2}^{T}s^{-\frac{d}{2}-1} ds\\
    &=C\|A\|\left((T/2)^{-\frac{d}{2}}-T^{-\frac{d}{2}}\right)\\
   & =C\|A\| T^{-d/2}, 
\end{align*}
which gives $\lim_{T\to\infty}I_1(T)=0$ as desired.

{\bf\noindent Case 2:} 
Note that $(2\pi T)^{d/2}P_{T-s}(x)$ is uniformly bounded in $x\in\R^d$, $0< s\le T/2$, and $T\ge 1$.  Furthermore, for fixed $x$ and $s$, the limit as $T\to\infty$ is equal to 1. We have
\begin{align*}
    |J(x,s,T)\ind_{\{0< s\le T/2\}}|
    &\le C\|A\| \Big|\nabla \left(\frac{P_s(x-m)}{H_s(x)}\right)\Big|\Big|\nabla \left(\frac{P_s(x-n)}{H_s(x)}\right)\Big|H_s(x)
\end{align*}
and the right hand side is integrable by Lemma~\ref{lem:intbound}. Therefore, the dominated convergence theorem yields 
\[
    \lim_{T\to\infty} I_2(T)= \int_0^\infty \int_{\R^d}H_s(x)\nabla \left(\frac{P_s(x-m)}{H_s(x)}\right)\cdot A(x,s)\nabla \left(\frac{P_s(x-n)}{H_s(x)}\right)\,dx ds. \qedhere
\]

\end{proof}

As observed above,
\begin{align*}
    \bE_{(0,T)}[|f(X_T)|^p] 
    =\sum_{n\in\Z^d}|f(n)|^p\frac{P_{T}(n)}{H_T(0)}.
\end{align*}
Since $H_T(0)\to 1$ and $(2\pi T)^{d/2}P_T(n)\to 1$ as $T\to\infty$, we have
\[
    \lim_{T\to\infty} (2\pi T)^{d/2}\bE_{(0,T)}[|f(X_T)|^p] = \|f\|_{\ell^p}^p.  
\]

From this, Proposition~\ref{lim-TtoInf} and Theorem~\ref{thm:main1}, we have 

\begin{theorem}\label{limCor}
\begin{enumerate}[label=(\roman*), leftmargin=*]
\item 
For any bounded matrix-valued function $A$, 
\begin{equation}\label{AnyACor}
    \|\cP_Af\|_{\ell^p}\leq \|A\| (p^\ast -1)\|f\|_{\ell^p}.
\end{equation} 
\item 
For any bounded matrix-valued function $A$ satisfying the additional property~\eqref{Non-Symm} in Theorem~\ref{thm:MartTrans},
\begin{equation}\label{AnyAChoyCor}
    \|\cP_Af\|_{\ell^p}\leq \|A\| C_{p, b, B}\|f\|_{\ell^p}.
\end{equation} 
In particular, if $b=0, B=1$, then $C_{p, 0, 1}=\gamma(p)$ as in~\eqref{ChoiAsym}. 
\end{enumerate}
\end{theorem}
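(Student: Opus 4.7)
The strategy is to combine Proposition~\ref{lim-TtoInf}, Theorem~\ref{lem:lpbdd}, and the observation that $(2\pi T)^{d/2}\bE_{(0,T)}[|f(X_T)|^p]\to \|f\|_{\ell^p}^p$, via a standard duality argument. I would first reduce to compactly supported $f$ and $g$ with $\|g\|_{\ell^q}\le 1$, where $q$ is the conjugate exponent of $p$; the extension to general $f\in\ell^p(\Z^d)$ then follows by density ($1<p<\infty$), and the supremum over such $g$ recovers the $\ell^p$-norm of $\cP_A f$.

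For compactly supported $f,g$, Proposition~\ref{lim-TtoInf} identifies the pairing $\sum_n \cP_A(f)(n)\, g(n)$ with $\lim_{T\to\infty}(2\pi T)^{d/2}\bE_{(0,T)}[\cP^T_A(f)(X_T)\, g(X_T)]$. Writing $(2\pi T)^{d/2}=(2\pi T)^{d/(2p)}(2\pi T)^{d/(2q)}$ and applying H\"older's inequality inside the expectation, followed by Theorem~\ref{lem:lpbdd}(i) to the factor containing $\cP^T_A(f)$, gives
\begin{align*}
(2\pi T)^{d/2}&\big|\bE_{(0,T)}[\cP^T_A(f)(X_T)\, g(X_T)]\big| \\
&\le \|A\|(p^*-1)\bigl[(2\pi T)^{d/2}\bE_{(0,T)}|f(X_T)|^p\bigr]^{1/p}\bigl[(2\pi T)^{d/2}\bE_{(0,T)}|g(X_T)|^q\bigr]^{1/q}.
\end{align*}
By the observation preceding the theorem, each bracketed factor converges to $\|f\|_{\ell^p}^p$ and $\|g\|_{\ell^q}^q$ respectively as $T\to\infty$. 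Passing to the limit and then taking the supremum over $g$ yields \eqref{AnyACor}.

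For part (ii), the argument proceeds verbatim with Theorem~\ref{lem:lpbdd}(ii) in place of (i), which replaces the factor $\|A\|(p^*-1)$ by $\|A\|\,C_{p,b,B}$, reducing to $\|A\|\gamma(p)$ when $b=0$, $B=1$. I do not anticipate a significant obstacle: the substantive analytic work (the $L^p(\bE_{(0,T)})$-bound on $\cP^T_A$ uniform in $T$, as well as the existence and identification of the dual limit) was already carried out in Theorem~\ref{lem:lpbdd} and Proposition~\ref{lim-TtoInf}. What remains is merely the bookkeeping of distributing $(2\pi T)^{d/2}$ as $(2\pi T)^{d/(2p)}(2\pi T)^{d/(2q)}$ across the two H\"older factors \emph{before} passing to the limit in $T$, so that each factor stays finite in the limit.
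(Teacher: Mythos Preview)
Your proposal is correct and follows essentially the same approach as the paper, which simply states that the theorem follows by ``combining this observation, Proposition~\ref{lim-TtoInf} and Theorem~\ref{lem:lpbdd}.'' You have merely spelled out the duality bookkeeping (H\"older plus the splitting $(2\pi T)^{d/2}=(2\pi T)^{d/(2p)}(2\pi T)^{d/(2q)}$) that the paper leaves implicit.
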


\section{Probabilistic second-order discrete Riesz transforms}\label{ProbDisRiesz} 

For the rest of the paper, we assume that $d\geq 2$.  Fix $j,k=1,2,\ldots, d$. Let $A^{jk}$ be the matrix whose $a^{jk}=1$ and otherwise 0. Motivated by the construction for the classical Riesz transforms $R^{(jk)}$ in \cite[Proposition 2.2]{BanMen}, we define the \emph{probabilistic second-order discrete Riesz transforms} by $\cR^{(jk)}=\cP_{A^{jk}}$ where $A^{jk}=(a_{r,s}^{jk})$ is the $d\times d$ matrix with 
\begin{align}\label{Projmat}
    a_{j, k}^{jk}=-1\quad \text{and} \quad a_{r, s}^{jk}=0,\,\,  \text{if}\,\,  r\neq j \,\, \text{or}\,\,  s\neq k. 
\end{align}
From~\eqref{eq:kernel}, we see that $\cR^{(jk)}(f)(n) =\sum_{m\in\Z^d}\cK^{(jk)}(n-m)f(m)$ where
\begin{align}\label{ProKer}
    \cK^{(jk)}(m)
    =-\int_0^{\infty}\int_{\R^d}H_s(x)\frac{\partial}{\partial x_j} \left(\frac{P_s(x)}{H_s(x)}\right)\frac{\partial}{\partial x_k} \left(\frac{P_s(x-m)}{H_s(x)}\right)\,dx ds.
\end{align}
When $j\neq k$, it follows that $\cR^{(jk)}$ is also given by $-\cR_{A^{kj}}$.  That is, $2\cR^{(jk)}=\cP_{A}$ where $A=A^{jk}-A^{kj}$. Note that $\cK_A(n,n)=0$. Since $\|A\|\leq 1$, we have the following corollary from Theorem~\ref{limCor} (exactly as in~\cite[Theorem 2.2]{BanMen}).

\begin{corollary}\label{HalfSharp} 
For all $j,k=1,2,\ldots,d$, with $j\ne k$, we have
\begin{align}
    \|2\cR^{(jk)}\|_{\ell^p\to\ell^p}\leq (p^\ast -1),\,\,\,\, 
    \|\cR^{(kk)}-\cR^{(jj)}\|_{\ell^p\to\ell^p}\leq (p^\ast -1),
\end{align} 
and when $k=j$, we have 
\begin{align} 
    \|\cR^{(jj)}\|_{\ell^p\to\ell^p}\leq \gamma(p). 
\end{align} 
\end{corollary}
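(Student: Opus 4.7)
The plan is straightforward: recognize each of the three operators as $\cP_A$ for an explicit constant matrix $A$ and then quote Theorem~\ref{limCor}. The only work is to compute $\|A\|$ and, in the diagonal case, to verify the positivity hypothesis that allows one to invoke part (ii) rather than part (i).

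First, for $j \neq k$, the excerpt already notes $2\cR^{(jk)} = \cP_{A_{jk}-A_{kj}}$. The matrix $A = A_{jk}-A_{kj}$ has entry $-1$ at position $(j,k)$ and $+1$ at position $(k,j)$, so $Av = -v_k e_j + v_j e_k$ for every $v\in\R^d$, hence $|Av|^2 = v_j^2+v_k^2 \le |v|^2$ and $\|A\| \le 1$. Theorem~\ref{limCor}(i) then immediately gives $\|2\cR^{(jk)}\|_{\ell^p\to \ell^p} \le p^*-1$. For the second off-diagonal inequality I would write $\cR^{(kk)}-\cR^{(jj)} = \cP_{A_{kk}-A_{jj}}$; the diagonal matrix $A_{kk}-A_{jj}$ sends $v \mapsto v_j e_j - v_k e_k$, so again $\|A_{kk}-A_{jj}\|\le 1$, and Theorem~\ref{limCor}(i) closes the argument.

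For the diagonal bound, I would pass to $-\cR^{(jj)} = \cP_{-A_{jj}}$ in order to apply part (ii) of Theorem~\ref{limCor}. The matrix $-A_{jj}$ is the orthogonal projection onto $\R e_j$, so $(-A_{jj})\xi\cdot \xi = \xi_j^2$, which satisfies $0 \le (-A_{jj})\xi\cdot\xi \le |\xi|^2$ for every $\xi \in \R^d$. Thus condition~\eqref{Non-Symm} holds with $b=0$, $B=1$, and $\|-A_{jj}\|=1$. Theorem~\ref{limCor}(ii) then yields $\|\cR^{(jj)}\|_{\ell^p \to \ell^p} = \|-\cR^{(jj)}\|_{\ell^p \to \ell^p} \le \gamma(p)$, as claimed.

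The only genuine point to keep in mind, and therefore the only ``obstacle,'' is that in the diagonal case $A_{jj}$ itself produces a non-positive quadratic form, so one must negate before invoking (ii); applying (i) directly to $A_{jj}$ would give only the cruder bound $p^*-1$ and miss Choi's constant.
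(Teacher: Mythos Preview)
Your proof is correct and follows exactly the approach indicated in the paper: the paper's one-line justification before the corollary (``Since $\|A\|\leq 1$, we have the following corollary from theorem~\ref{limCor}'') is precisely what you have spelled out, including the observation that $2\cR^{(jk)}=\cP_{A_{jk}-A_{kj}}$. Your explicit treatment of the diagonal case---negating to land in the range $b=0$, $B=1$ before invoking part~(ii)---is the natural way to fill in the detail the paper leaves to the cited reference~\cite[Theorem~2.2]{BanMen}.
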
 

Before presenting the proof of Theorem~\ref{thm:main}, we highlight the relationship between the probabilistic discrete second-order Riesz transforms $\cR^{(jk)}$ with kernels  $\cK^{(jk)}(m)$  given by~\eqref{ProKer} and the discrete Calder\'on-Zygmund discrete second-order Riesz transforms $R^{(jk)}_{\dis}$ with kernels $K^{(jk)}_{\dis}$ given by~\eqref{disRiesz2}. 

For notational convenience, we use $H(x, t)$ in place of $H_t(x)$. 

\begin{theorem}\label{prop:Kdecomp} 
For all $j,k=1, 2, \ldots, d$ and all $m\in \Z^d\setminus\{0\}$, we have 
\begin{align}\label{EqProp:Kdecomp}
    \cK^{(jk)}(m) =  U(m)K^{(jk)}_{\dis}(m),
\end{align}
where
\begin{align*}
    U(m)=  -\frac{1}{\pi^{d/2}\Gamma((d+2)/2)}\int_0^\infty\int_{\R^d} \frac{s^{\tfrac{d}{2}}e^{-s-|z|^2}}{H\left(\tfrac{z|m|}{2\sqrt{s}}+\tfrac{m}{2}, \tfrac{|m|^2}{4s}\right)} \, dzds.
\end{align*}
\end{theorem}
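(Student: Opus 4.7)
The plan is to reduce the integral defining $\cK^{(jk)}(m)$ in \eqref{ProKer} to the integral defining $U(m)$ via a substitution that centers the Gaussian $P_s(x)P_s(x-m)$ at the origin, followed by a change of variables that matches the explicit form of $U(m)$.

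First, expand the derivatives. Since $\partial_j(P_s/H_s) = -(P_s/H_s)(x_j/s + \partial_j\log H_s)$ and analogously for $\partial_k$, the integrand of \eqref{ProKer} becomes
\[
H_s\,\partial_j(P_s/H_s)\,\partial_k(P_s(x-m)/H_s) = \frac{P_s(x)P_s(x-m)}{H_s(x)}\Big(\tfrac{x_j}{s}+\partial_j\log H_s\Big)\Big(\tfrac{x_k-m_k}{s}+\partial_k\log H_s\Big).
\]

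Second, change variables $x = y + m/2$. The algebraic identity $|x|^2+|x-m|^2 = 2|y|^2+|m|^2/2$ gives $P_s(x)P_s(x-m) = (2\pi s)^{-d}e^{-|y|^2/s-|m|^2/(4s)}$, a centered Gaussian in $y$. Decompose
\[
\tfrac{x_j}{s}+\partial_j\log H_s = \tfrac{m_j}{2s} + B_j, \qquad \tfrac{x_k-m_k}{s}+\partial_k\log H_s = -\tfrac{m_k}{2s} + B_k,
\]
with $B_j = y_j/s + \partial_j\log H_s(y+m/2)$ and $B_k$ defined analogously. Two structural observations do the work: (i) since $m\in\Z^d$ and $H_s$ is both $\Z^d$-periodic and even, $H_s(y+m/2) = H_s(y-m/2)$, so the weight $W(y,s) := P_s(x)P_s(x-m)/H_s(x)$ is even in each coordinate $y_i$ separately; (ii) by~\eqref{eq:h-prod}, $H_s(x) = \prod_i \theta(x_i, s)$, so $W$ factors as $\prod_i w_i(y_i,s)$, and $B_j$ depends only on $y_j$ and is odd in $y_j$ (because $(\log\theta)'$ is odd while $\theta$ has period $1$, making $(\log\theta)'(y+m_j/2, s)$ antisymmetric around $y=0$). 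Facts (i) and (ii) annihilate the two linear cross terms on integration in $y$; for $j \neq k$ they additionally force $\int_{\R^d} W\, B_j B_k \, dy = 0$ by separation of variables. The only surviving contribution comes from $(m_j/(2s))(-m_k/(2s)) = -m_j m_k/(4s^2)$, leaving
\[
\cK^{(jk)}(m) = -\frac{m_j m_k}{4}\int_0^\infty\!\!\int_{\R^d}\frac{P_s(x)P_s(x-m)}{s^2 H_s(x)}\,dx\,ds.
\]

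Third, substitute $t = |m|^2/(4s)$ and $z = 2\sqrt{s}(x-m/2)/|m|$, so that $x = z|m|/(2\sqrt{s}) + m/2$ and $|z|^2 = |x-m/2|^2/t$. Using $|x|^2+|x-m|^2 = 2|x-m/2|^2+|m|^2/2$, one checks $s+|z|^2 = (|x|^2+|x-m|^2)/(2t)$, hence $s^{d/2}e^{-s-|z|^2} = \pi^d|m|^d\,t^{d/2}\,P_t(x)P_t(x-m)$. The Jacobians give $dz\,ds = (|m|^2/4)\,t^{-d/2-2}\,dx\,dt$. Assembling everything against $c_d = \pi^{-d/2}\Gamma((d+2)/2)$ and $K^{(jk)}_{\dis}(m) = c_d m_j m_k/|m|^{d+2}$ gives precisely \eqref{EqProp:Kdecomp}.

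The main obstacle I anticipate is the diagonal case $j=k$, where the quadratic term $\int_{\R^d} W B_j^2\,dy$ no longer vanishes by the separation-of-variables argument and produces a nontrivial one-dimensional integral against $w_j(y_j,s)$. Reducing this integral to the appropriate multiple of $m_j^2/(4s^2)$ will likely require a Gaussian-type integration by parts in $y_j$, combined with the one-dimensional heat identity $\partial_s\theta = (1/2)\partial_x^2\theta$ to convert the $B_j^2$ piece into the desired form. This is the most delicate step, and care is needed to ensure that no boundary contributions appear as $s\to 0$ or $s\to\infty$.
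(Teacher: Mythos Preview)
For $j\neq k$ your argument is correct and is essentially the paper's: both use the product structure $H_s=\prod_l\theta(x_l,s)$ together with the parity of $\theta'$ (your observation that $B_j$ is odd in $y_j$) to eliminate every term except the constant $-m_jm_k/(4s^2)$, and then perform the same change of variables $t=|m|^2/(4s)$, $z=(x-m/2)/\sqrt t$ to reach the expression for $U(m)$. The paper's displayed factorization step in fact tacitly assumes $j\neq k$, so on the off-diagonal case you and the paper are in exactly the same position.

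Your worry about the diagonal case $j=k$ is justified, but the proposed remedy cannot work, and not for a merely technical reason: the leftover term $\int_{\R} w_j(y_j,s)\,B_j(y_j,s)^2\,dy_j$ is \emph{strictly positive} for every $s>0$ (since $w_j>0$ and $B_j\not\equiv 0$), so no integration by parts or heat-equation identity can make it vanish or fold it into the $m_j^2/(4s^2)$ piece. Indeed the identity $\cK^{(jj)}(m)=U(m)K^{(jj)}_{\dis}(m)$ fails as stated: choose any $m\neq 0$ with $m_j=0$, so that the right-hand side is $0$, whereas
\[
\cK^{(jj)}(m)=-\int_0^\infty\Big[\prod_{l\neq j}\int_{\R}\tfrac{P_s(x_l)P_s(x_l-m_l)}{\theta(x_l,s)}\,dx_l\Big]\int_{\R}\tfrac{P_s(x_j)^2}{\theta(x_j,s)}\Big(\tfrac{x_j}{s}+\tfrac{\theta'(x_j,s)}{\theta(x_j,s)}\Big)^{2}dx_j\,ds<0.
\]
Thus the statement (and the paper's own proof) should be read as established only for $j\neq k$; the diagonal kernel carries an additional strictly positive contribution that is not captured by the formula $U(m)K^{(jj)}_{\dis}(m)$.
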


\begin{proof} 
Our goal is to show that the ratio $\cK^{(jk)}(m) /K^{(jk)}_{\dis}(m)$ is equal to $U(m)$ as defined above. Note that $\cK^{(jk)}(m)$ is defined in~\eqref{ProKer} and $K^{(jk)}_{\dis}(m)$ is given in~\eqref{disRiesz2}. 

First, we simplify the integrand of $\cK^{(jk)}(m)$ in~\eqref{ProKer}
\begin{align}\label{eq:cKintegrand}
    H_s(x)\frac{\partial}{\partial x_j} \left(\frac{P_s(x)}{H_s(x)}\right)\frac{\partial}{\partial x_k} \left(\frac{P_s(x-m)}{H_s(x)}\right)
\end{align}
using the representation of $H_s(x)$ in terms of $\theta$ in~\eqref{eq:h-prod}.

Recall that the derivative of $\theta(z, t)$ with respect to the first variable is denoted by $\partial_z\theta(z,t)$.
From 
\begin{align*}
    \frac{\partial }{\partial x_j} P_t(x-n) = -\frac{(x_j-n_j)}{t}P_t(x-n),\quad
    \frac{\partial }{\partial x_j} H(x,t) = H(x,t)\partial_z\theta(x_j,t)/\theta(x_j,t),
\end{align*}
we get
\begin{align*}
    &H(x,t)\frac{\partial}{\partial x_j} \left(\frac{P_t(x)}{H(x,t)}\right)\frac{\partial}{\partial x_k} \left(\frac{P_t(x-m)}{H(x,t)}\right)\\
    &= \frac{1}{H(x,t)^3}\left(H\partial_j P_t(x) - P_t(x)\partial_j H(x,t)\right)\left(H(x,t)\partial_k P_t(x-m) - P_t(x-m)\partial_k H(x,t)\right)\\
    &= \frac{\partial_j P_t(x) \partial_k P_t(x-m)}{H(x, t)} - \frac{P_t(x)\partial_j H(x, t) \partial_k P_t(x-m)}{H(x, t)^2} \\
    &\qquad \qquad- \frac{P_t(x-m)\partial_j P_t(x) \partial_k H(x, t)}{H(x, t)^2} +\frac{P_t(x) P_t(x-m) \partial_j H(x, t) \partial_k H(x, t)}{H(x, t)^3}\\
    &= \frac{P_t(x) P_t(x-m)}{H(x, t)}\left(\frac{x_j}{t}+\frac{\partial_{z} \theta(x_j,t)}{\theta(x_j,t)}\right)\left(\frac{x_k-m_k}{t}+\frac{\partial_{z} \theta(x_k,t)}{\theta(x_k,t)}\right)\\
    &= \prod_{l=1}^d\frac{P^{(1)}_t(x_l)P^{(1)}_t(x_l-m_l)}{\theta(x_l,t)}\left(\frac{x_j}{t}+\frac{\partial_{z} \theta(x_j,t)}{\theta(x_j,t)}\right)\left(\frac{x_k-m_k}{t}+\frac{\partial_{z} \theta(x_k,t)}{\theta(x_k,t)}\right).
\end{align*}

Now, the integral of~\eqref{eq:cKintegrand} over $\R^d$ can be split into the product of integrals over $\R$ as
\begin{align*}
    \int_{\R^d}H(x,t)\frac{\partial}{\partial x_j} \left(\frac{P_t(x)}{H(x,t)}\right)\frac{\partial}{\partial x_k} \left(\frac{P_t(x-m)}{H(x,t)}\right)\,dx =F_2(m_j, t)F_3(m_k, t)\prod_{\substack{i=1,\\i\neq j,k}}^d F_1(m_i, t),
\end{align*}
where
\begin{align*}
    F_1(m_i,t) &=
    \int_{\R}\frac{P^{(1)}_t(x_i)P^{(1)}_t(x_i-m_i)}{\theta(x_i,t)}\, dx_i,\\
    F_2(m_j,t) &=\int_{\R}\frac{P^{(1)}_t(x_j)P^{(1)}_t(x_j-m_j)}{\theta(x_j,t)}\left(\frac{x_j}{t} +\frac{\partial_{z}\theta(x_j,t)}{\theta(x_j,t)}\right)\, dx_j,\\
    F_3(m_k,t) &=\int_{\R}\frac{P^{(1)}_t(x_k)P^{(1)}_t(x_k-m_k)}{\theta(x_k,t)}\left(\frac{x_k-m_k}{t} +\frac{\partial_{z}\theta(x_k,t)}{\theta(x_k,t)}\right)\, dx_k.
\end{align*}
We will first simplify $F_2(m_j,t)$ and $F_3(m_k,t)$. By the change of variable $y=m_k-x_k$ and  the symmetry of $P^{(1)}_t(x_k)$ in $x_k$, we have
\begin{align*}
    \int_{\R}\frac{P^{(1)}_t(x_k)P^{(1)}_t(x_k-m_k)}{\theta(x_k,t)} \, dx_k
    &=-\int_{\R}\frac{P^{(1)}_t(m_k-y)P^{(1)}_t(-y)}{\theta(m_k-y,t)} \, dy\\
    &=-\int_{\R}\frac{P^{(1)}_t(y-m_k)P^{(1)}_t(y)}{\theta(m_k-y,t)} \, dy.
\end{align*}
Since $\theta(m_k-y,t)=\theta(y,t)$ by~\eqref{eq:theta}, the integral is zero and so 
\[
    \int_{\R}\frac{(x_k-m_k)P^{(1)}_t(x_k)P^{(1)}_t(x_k-m_k)}{t\theta(x_k,t)} \, dx_k
    =\int_{\R}\frac{x_k P^{(1)}_t(x_k)P^{(1)}_t(x_k-m_k)}{t\theta(x_k,t)} \, dx_k.
\]
For $l=j,k$, the change of variable $y=m_l/2 -x$ yields
\begin{align*}
    \int_{\R}{P^{(1)}_t(x)P^{(1)}_t(x-m_l)}\frac{\partial_{z}\theta(x,t)}{\theta(x,t)^2}\, dx
    =-\int_{\R}{P^{(1)}_t\left(\frac{m_l}{2}-y\right)P^{(1)}_t\left(\frac{m_l}{2}+y\right)}\frac{\partial_{z}\theta(\frac{m_l}{2}-y,t)}{\theta(\frac{m_l}{2}-y,t)^2}\, dy.
\end{align*}
Since $P^{(1)}_t\left(\frac{m_l}{2}-y\right)P^{(1)}_t\left(\frac{m_l}{2}+y\right)$ and $\theta(\frac{m_l}{2}-y,t)$ are even functions in $y$, the integrand is odd. So, we have 
\[
\int_{\R}{P^{(1)}_t(x)P^{(1)}_t(x-m_l)}\frac{\partial_{z}\theta(x,t)}{\theta(x,t)^2}\, dx=0.
\]
Therefore, we get
\begin{align*}
    F_2(m_i,t) =
    F_3(m_i,t) =
    \int_{\R}\frac{x P^{(1)}_t(x)P^{(1)}_t(x-m_i)}{t\theta(x,t)}\, dx,\qquad i=j,k.
\end{align*}
Note that the change of variable $z\mapsto z+m_i/2$ yields that for $i\ne j,k$, 
\begin{align*}
    F_1(m_i,t)
    =\int_{\R}\frac{P^{(1)}_t(z)P^{(1)}_t(z-m_i)}{\theta(z,t)}\, dz 
    =\frac{e^{-m_i^2/(4t)}}{2\pi t}\int_{\R}\frac{e^{-z^2/t}}{\theta(z+m_i/2,t)}\,dz,
\end{align*}
and for $i= j,k$,
\begin{align*}
    F_2(m_i,t)
    =F_3(m_i,t)
    =\int_{\R}\frac{z P^{(1)}_t(z)P^{(1)}_t(z-m_i)}{t \theta(z,t)}\, dz 
    =\frac{m_i e^{-m_i^2/(4t)}}{4\pi t^2}\int_{\R}\frac{e^{-z^2/t}}{\theta(z+m_i/2,t)}\,dz.
\end{align*}
Therefore, $\cK^{(jk)}(m)$ can be written as
\begin{align*}
    \cK^{(jk)}(m) 
    &=-\int_0^\infty F_2(m_j, t)F_3(m_k, t)\prod_{\substack{i=1,\\i\neq j,k}}^d F_1(m_i, t)\, dt\\
    &=-\frac{m_j m_k}{2^{d+2}\pi^{d}}
        \int_0^\infty t^{-d-2}e^{-\frac{|m|^2}{4t}}\int_{\R^d}\frac{e^{-|x|^2/t}}{H\left(x+\tfrac{m}{2},t\right)}\,dxdt.
\end{align*}
By the change of variables $x=z\sqrt{t}$ and $t=\frac{|m|^2}{4s}$, we finally get
\begin{align*}
    &\cK^{(jk)}(m) \\
    &=-\frac{m_j m_k}{2^{d+2}\pi^{d/2}}\int_0^\infty t^{-\frac{d}{2}-2}e^{-\frac{|m|^2}{4t}} \left(\pi^{-\tfrac{d}{2}}\int_{\R^d}\frac{e^{-|z|^2}}{H(z\sqrt{t}+\tfrac{m}{2},t)}\,dz\right)dt\nonumber\\
    &=-\frac{m_j m_k}{2^{d+2}\pi^{d/2}}\int_0^\infty \left(\frac{|m|^2}{4s}\right)^{-\frac{d}{2}-2}e^{-s} \left(\pi^{-\tfrac{d}{2}}\int_{\R^d}\frac{e^{-|z|^2}}{H\left(\tfrac{z|m|}{2\sqrt{s}}+\tfrac{m}{2}, \tfrac{|m|^2}{4s}\right)}\,dz\right)\frac{|m|^2}{4s^2}ds\nonumber\\
    &=-c_d\frac{m_j m_k}{|m|^{d+2}} \left( \frac{1}{\pi^{d/2} \Gamma\left(\tfrac{d+2}{2}\right)} \int_0^\infty\int_{\R^d} \frac{s^{\tfrac{d}{2}}e^{-s-|z|^2}}{H\left(\tfrac{z|m|}{2\sqrt{s}}+\tfrac{m}{2}, \tfrac{|m|^2}{4s}\right)} \, dzds  \right),\\
\end{align*}
where the constant $c_d$ is as in~\eqref{disRiesz2}. Comparing with the definition of $K^{(jk)}_{\dis}$ (see~\eqref{disRiesz1}) finishes the proof.  \qedhere

\end{proof}

Below, we give some corollaries that provide some further information and properties of the function $U(m)$. Recall that the density of the inverse Gamma distribution $S$ with parameters $\alpha$ and $\beta$ is given by  
$f(t; \alpha, \beta)=\frac{\beta^{\alpha}}{\Gamma(\alpha)}\frac{e^{-\frac{\beta}{t}}}{t^{\alpha+1}}$,  for $t>0$.

\begin{corollary}\label{InverseGammaU}
Let $S$ be a random variable with an inverse Gamma distribution of parameters $\alpha$ and $\beta$ given by $\alpha=\frac{d+2}{2}$, $\beta=\frac{|m|^2}{4}$. Let $B_t$ be the standard $d$-dimensional Brownian motion. Then, for $m\in\Z^d$, with $m\ne 0$,
\begin{align}
    U(m)=-\bE\left(\frac{1}{H_{S}\left(\frac{B_S}{\sqrt{2}}+\frac{m}{2}\right)}\right) \quad \text{and}\quad U(m)=U(-m). 
\end{align}  
\end{corollary}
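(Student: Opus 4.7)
The plan is to carry out a two-step change of variables in the double integral defining $U(m)$ and then recognise the resulting integrand as an expectation.

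First, in the outer integral substitute $t=|m|^{2}/(4s)$, i.e.\ $s=|m|^{2}/(4t)$ and $ds=|m|^{2}/(4t^{2})\,dt$ (after flipping orientation). A direct computation gives
\begin{equation*}
\frac{s^{\tfrac{d}{2}}e^{-s}}{\Gamma((d+2)/2)}\,ds
= \frac{(|m|^{2}/4)^{(d+2)/2}}{\Gamma((d+2)/2)}\,t^{-(d+4)/2}\,e^{-|m|^{2}/(4t)}\,dt
= f_{S}(t)\,dt,
\end{equation*}
where $f_{S}$ is exactly the density of the inverse Gamma distribution with parameters $\alpha=(d+2)/2$ and $\beta=|m|^{2}/4$. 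Since $|m|/(2\sqrt{s})=\sqrt{t}$, the spatial argument of $H$ becomes $z\sqrt{t}+m/2$, so
\begin{equation*}
U(m)=-\int_{0}^{\infty}\!\!\int_{\R^{d}}\frac{\pi^{-d/2}e^{-|z|^{2}}}{H_{t}(z\sqrt{t}+m/2)}\,dz\,f_{S}(t)\,dt.
\end{equation*}

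Next, observe that $\pi^{-d/2}e^{-|z|^{2}}$ is the density of the $d$-dimensional Gaussian measure with covariance $\tfrac{1}{2}I$, so $z\sqrt{t}$ under this measure is distributed as $N(0,\tfrac{t}{2}I)$. Since $B_{t}/\sqrt{2}$ has exactly this law, the inner integral equals $\bE[\,1/H_{t}(B_{t}/\sqrt{2}+m/2)\,]$. Taking $B$ independent of $S$ and integrating against $f_{S}(t)\,dt$ yields
\begin{equation*}
U(m)=-\bE\!\left(\frac{1}{H_{S}\!\left(\dfrac{B_{S}}{\sqrt{2}}+\dfrac{m}{2}\right)}\right),
\end{equation*}
which is the first assertion. (Integrability of the integrand, which justifies Fubini throughout, follows from the two-sided bounds on $H$ in~\eqref{heatestimate} together with $|m|\ge 1$.)

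For the symmetry $U(-m)=U(m)$, I would use two facts: the law of $B_{S}$ is invariant under $x\mapsto -x$ (conditional on $S$, $B_{S}$ is a centred Gaussian), and $H_{t}$ is even in $x$, as is immediate from $H_{t}(x)=\sum_{n\in\Z^{d}}P_{t}(x-n)$ by re-indexing $n\mapsto -n$. Replacing $m$ by $-m$ and then $B_{S}$ by $-B_{S}$ inside the expectation leaves the argument of $H_{S}$ multiplied by $-1$, which the evenness of $H_{S}$ absorbs. No step here presents a genuine obstacle; the only bookkeeping subtlety is tracking the Jacobian and the constants to verify that the inverse Gamma normalisation matches $\Gamma((d+2)/2)^{-1}$ cleanly.
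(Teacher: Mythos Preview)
Your proof is correct and follows essentially the same route as the paper: a change of variables to expose the inverse Gamma density in the time variable and identification of the spatial Gaussian with the law of $B_t/\sqrt{2}$. The paper carries this out by re-deriving from the expression for $\cK^{(jk)}(m)$, while you work directly from the formula for $U(m)$ in Theorem~\ref{prop:Kdecomp}, but the substance is identical; in fact, you supply the short argument for $U(m)=U(-m)$ (via evenness of $H_t$ and symmetry of $B_S$), which the paper's proof leaves implicit.
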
 

\begin{proof} 
As in the proof of Theorem \ref{prop:Kdecomp}, we have 
\begin{align*}
    \cK^{(jk)}(m)
    &=-\frac{m_j m_k}{2^{d+2}\pi^{d/2}}\int_0^\infty t^{-\frac{d}{2}-2}e^{-\frac{|m|^2}{4t}} (\pi t)^{-d/2}\int_{\R^d}\frac{e^{-|z|^2/t}}{H_t(z+m/2)}\,dzdt\\
    &=-\frac{m_j m_k}{2^{d+2}\pi^{d/2}}\int_0^\infty t^{-\frac{d}{2}-2}e^{-\frac{|m|^2}{4t}} \left((2\pi t)^{-d/2}\int_{\R^d}\frac{e^{-\tfrac{|z|^2}{2t}}}{H_t\left(\tfrac{z}{\sqrt{2}}+\tfrac{m}{2}\right)}\,dz\right)dt\\
    &=-c_d \frac{m_j m_k}{2^{d+2}\Gamma\left(\tfrac{d+2}{2}\right)}
    \int_0^\infty 
    t^{-\tfrac{d}{2}-2}e^{-\frac{|m|^2}{4t}} 
    \bE\left[\frac{1}{H_t\left(\frac{B_t}{\sqrt{2}}+\frac{m}{2}\right)}\right]
    dt\\
    &=-c_d\frac{m_j m_k}{|m|^{d+2}}\int_0^{\infty}f(t; \alpha, \beta) \bE\left[\frac{1}{H_t\left(\frac{B_t}{\sqrt{2}}+\frac{m}{2}\right)}\right] dt\\
    &= -K^{(jk)}_{\dis}(m)
    \bE\left[\frac{1}{H_S\left(\frac{B_S}{\sqrt{2}}+\frac{m}{2}\right)}\right], 
\end{align*}
and the corollary follows.
\end{proof} 

\begin{corollary}\label{U-new}
$U(m)$ is a function of the number of even coordinates in $m=(m_1,\ldots,m_d)$, which we denote by $e(m)$,  and $|m|$.
\end{corollary}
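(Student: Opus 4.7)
The plan is to exploit the product structure of the torus heat kernel given in~\eqref{eq:h-prod} together with the $1$-periodicity of each factor $\theta(\cdot,s)$, which will reduce the probabilistic representation of $U(m)$ from Corollary~\ref{InverseGammaU} to an expression whose dependence on $m$ enters only through $e(m)$ and $|m|$.

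Starting from
\[
U(m)=-\bE\left[\frac{1}{H_S\!\left(\tfrac{B_S}{\sqrt{2}}+\tfrac{m}{2}\right)}\right],
\]
I would first substitute the product identity $H_s(x)=\prod_{k=1}^d\theta(x_k,s)$ to obtain
\[
\frac{1}{H_S\!\left(\tfrac{B_S}{\sqrt{2}}+\tfrac{m}{2}\right)}=\prod_{k=1}^d\frac{1}{\theta\!\left(\tfrac{B_S^{(k)}}{\sqrt{2}}+\tfrac{m_k}{2},\,S\right)}.
\]
Since $\theta(z+1,s)=\theta(z,s)$, the factor $\theta(z+m_k/2,s)$ equals $\theta(z,s)$ when $m_k$ is even and $\theta(z+\tfrac12,s)$ when $m_k$ is odd. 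Grouping coordinates by parity therefore produces exactly $e(m)$ ``even'' factors and $d-e(m)$ ``odd'' factors, with no further dependence on the individual integer values of the $m_k$.

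Next, I would condition on $S$. Because $B_s^{(1)},\ldots,B_s^{(d)}$ are i.i.d.\ $N(0,s)$, the conditional expectation of the above product factorizes: for each $s>0$,
\[
\bE\!\left[\frac{1}{H_s(B_s/\sqrt{2}+m/2)}\;\Big|\;S=s\right]=a(s)^{e(m)}\,b(s)^{d-e(m)},
\]
where
\[
a(s):=\bE\!\left[\frac{1}{\theta(B_s^{(1)}/\sqrt{2},\,s)}\right],\qquad b(s):=\bE\!\left[\frac{1}{\theta(B_s^{(1)}/\sqrt{2}+\tfrac12,\,s)}\right]
\]
are scalar functions depending on $s$ alone (and not on $m$).

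Finally, the law of $S$, which by Corollary~\ref{InverseGammaU} is inverse Gamma with parameters $\alpha=(d+2)/2$ and $\beta=|m|^2/4$, depends on $m$ only through $|m|$; hence taking expectation over $S$ via the tower property yields
\[
U(m)=-\bE\!\left[a(S)^{e(m)}\,b(S)^{d-e(m)}\right],
\]
which is manifestly a function of $(e(m),|m|)$ alone. I do not anticipate any serious obstacle: the entire argument is bookkeeping based on periodicity and the independence of Brownian coordinates. The only point needing a brief check is integrability for Fubini/tower, which follows because the Poisson representation of $\theta$ gives $\theta(\cdot,s)>0$ with $\theta(z,s)\ge (2\pi s)^{-1/2}e^{-|z|^2/(2s)}$, making $a(s)$ and $b(s)$ finite for every $s>0$.
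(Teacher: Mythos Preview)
Your argument is correct and follows essentially the same route as the paper: both exploit the product identity $H_s(x)=\prod_k\theta(x_k,s)$ and the $1$-periodicity of $\theta(\cdot,s)$ to reduce each coordinate's contribution to one of two fixed functions of $s$ (according to the parity of $m_k$), and then observe that the remaining $s$-integration depends on $m$ only through $|m|$. Your functions $a(s),b(s)$ coincide with the paper's $q_0(s),q_1(s)$ after the change of variables $w=z/\sqrt{2}$, so the two proofs differ only in packaging (probabilistic conditioning on $S$ versus writing out the inverse-Gamma density explicitly).
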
 

\begin{proof} 
Define 
\[
    q_j(t) := \int_{-1/2}^{1/2}\frac{\theta(x,t/2)}{\theta(x+j/2,t)}\, dx,\qquad j=0,1.
\]
Recall again that $Q=[-\tfrac{1}{2}, \tfrac{1}{2})^d$.
Since $H_t(x)$ is periodic in $x$, we get
\begin{align*}
    (\pi t)^{-d/2}\int_{\R^d}\frac{e^{-|z|^2/t}}{H_t(z+m/2)}\,dz
    &=\sum_{n\in\Z^d }(\pi t)^{-d/2}\int_{Q+n}\frac{e^{-|z|^2/t}}{H_{t}(z+m/2)}\,dz\\
    &=\int_{Q}\frac{H_{\frac{t}{2}}(z)}{H_t(z+m/2)}\,dz\\
    &=q_0(t)^{e(m)} q_1(t)^{d-e(m)}=:V_m(t).
\end{align*}
By Theorem~\ref{prop:Kdecomp}, $U(m)$ can be written as 
\begin{align*}
    U(m)
    &= -\frac{|m|^{d+2}}{2^{d+2}\Gamma(\frac{d+2}{2})}\int_0^\infty t^{-\frac{d}{2}-2}e^{-\frac{|m|^2}{4t}} V_{m}(t)\, dt\\
    &= -\frac{1}{\Gamma(\frac{d+2}{2})}\int_0^\infty t^{\frac{d}{2}}e^{-t} V_{m}\left(\frac{|m|^2}{4t}\right)\, dt\\
    &=- \frac{1}{\Gamma(\frac{d+2}{2})}\int_0^\infty t^{\frac{d}{2}}e^{-t} q_0\left(\frac{|m|^2}{4t}\right)^{e(m)} q_1\left(\frac{|m|^2}{4t}\right)^{d-e(m)}\, dt.   \qedhere
\end{align*} \end{proof} 

By the definition of the Gamma function, we see
\begin{align*}
    U(m)+1 
    =\frac{1}{\Gamma(\frac{d+2}{2})}\int_0^\infty t^{\frac{d}{2}}e^{-t} \left( 1-q_0\left(\frac{|m|^2}{4t}\right)^{e(m)} q_1\left(\frac{|m|^2}{4t}\right)^{d-e(m)}\right)\, dt.
\end{align*}
One can see from Figure~\ref{fig:q} that $q_0(t)$ and $q_1(t)$ are close to 1 when $t$ is away from 0, which implies that $U(m)$ is quite close to $-1$ for large $|m|$. Since replacing $U(m)$ with $-1$ in the kernel of $\cK^{(jk)}$ (see~\eqref{EqProp:Kdecomp}) gives the kernel of the discrete second-order Riesz transform, up to sign, it is interesting to measure how $U(m)$ differs from $-1$.

\begin{figure}[t]
\includegraphics[scale=0.65]{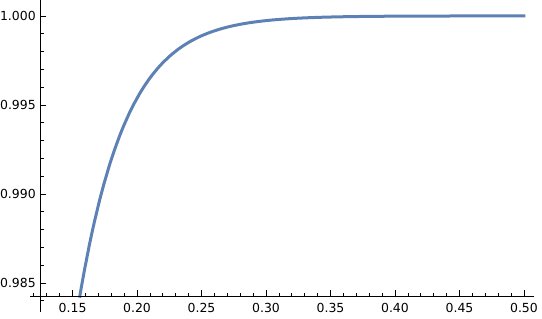}
\includegraphics[scale=0.65]{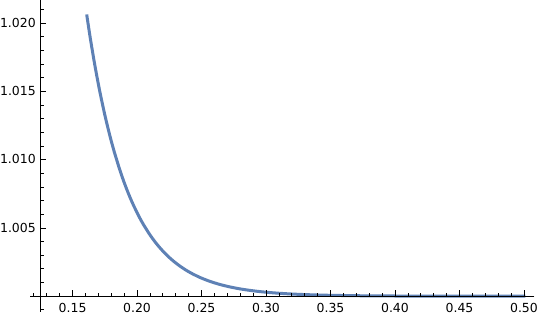}
\caption{The graphs of $q_0(t)$ (Left) and $q_1(t)$ (Right) for $1/8\le t\le 1/2$}
\label{fig:q}
\end{figure}

\begin{corollary}\label{lem:Ulimit} 
There exist dimensional constants $C_1,C_2, C_3, C_4>0$ such that 
\begin{enumerate}[label=(\roman*), leftmargin=*]
\item
$|U(m)+1|\le \frac{C_2}{|m|}$ for $|m|>C_1$, and
\item 
$0<C_3\leq |U(m)|\leq C_4$  for all $m\in\Z^d\setminus\{0\}$.  
\end{enumerate}
\end{corollary}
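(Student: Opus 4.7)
The plan is to work from the probabilistic representation $U(m) = -\bE[1/H_S(B_S/\sqrt{2}+m/2)]$ given by Corollary~\ref{InverseGammaU}, where $S$ is inverse Gamma with parameters $\alpha=(d+2)/2$ and $\beta=|m|^2/4$, and $B$ is an independent standard $d$-dimensional Brownian motion. Conditioning on $S$ first and using the periodicity of $H_t$, I would rewrite the inner Brownian expectation as
\[
\bE_B\!\left[\frac{1}{H_t(B_t/\sqrt{2}+m/2)}\right] = \int_Q \frac{H_{t/2}(x - m/2)}{H_t(x)}\,dx, \qquad Q=[-\tfrac12,\tfrac12)^d,
\]
which becomes the basic quantity to estimate in both parts.

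For (i), I would split $U(m)+1 = \bE[1 - 1/H_S(\cdot)]$ at $S=1$. On $\{S \ge 1\}$, Lemma~\ref{lem:hlimit} gives $|1/H_t - 1| \le C_d/\sqrt{t}$, and a direct moment computation for the inverse Gamma distribution yields $\bE[S^{-1/2}] = \Gamma(\alpha+1/2)/(\Gamma(\alpha)\sqrt{\beta}) = O(|m|^{-1})$, the main contribution. On $\{S < 1\}$, the estimates~\eqref{heatestimate}, combined with the identification
\[
\|x - m/2\|_Q^2 = \sum_{i:\,m_i \text{ even}} x_i^2 + \sum_{i:\,m_i \text{ odd}}(\tfrac12 - |x_i|)^2,
\]
yield $H_{t/2}(x - m/2)/H_t(x) \le C_d\, e^{(d-e(m))/(8t)}$, and the resulting integral against the inverse Gamma density produces an incomplete gamma factor $\Gamma(\alpha,c)/\Gamma(\alpha)$ with $c = \beta - (d-e(m))/8$, which is exponentially small in $|m|^2$. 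Combining the two contributions then yields $|U(m)+1| \le C_2/|m|$ for $|m| > C_1$.

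For (ii), the lower bound is easier: since $1/H_t \ge 1/(1+C_d)$ for $t \ge 1$ by Lemma~\ref{lem:hlimit}, we get $|U(m)| = \bE[1/H_S] \ge \P(S \ge 1)/(1+C_d)$, and $\P(S \ge 1) = \gamma(\alpha,\beta)/\Gamma(\alpha)$ (a direct inverse Gamma computation) is increasing in $\beta$, so $m \neq 0$ forces $\beta \ge 1/4$ and gives a uniform positive lower bound $C_3$. For the upper bound, the $\{S \ge 1\}$ part of $\bE[1/H_S]$ is at most $1+C_d$ by the same lemma, and for $\{S < 1\}$ the pointwise bound above gives $\bE[1/H_S\,\ind_{\{S<1\}}] \le C_d (\beta/c)^\alpha \Gamma(\alpha,c)/\Gamma(\alpha)$. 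The key arithmetic input is that $m \in \Z^d\setminus\{0\}$ implies $|m|^2 \ge d-e(m)$ (each odd coordinate contributes at least $1$ in absolute value), hence $c \ge \beta/2 > 0$ and $(\beta/c)^\alpha \le 2^\alpha$, producing a bound uniform in $m$.

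The main obstacle will be precisely this uniform control of the $\{S<1\}$ regime for small $|m|$. The naive pointwise bound $1/H_t(x) \le (2\pi t)^{d/2} e^{d/(8t)}$ would force the restriction $\beta > d/8$ after integration, excluding the smallest lattice points; the gain comes from first averaging over $Q$, where the exponent $|x|^2/(2t) - \|x-m/2\|_Q^2/t$ is controlled by $(d-e(m))/(8t)$ rather than $d/(8t)$, and this improved rate combined with $|m|^2 \ge d-e(m)$ is exactly what makes the bound uniform across all nonzero $m$.
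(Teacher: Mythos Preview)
Your argument is correct. For part (i) it is essentially the paper's proof in probabilistic language: the paper splits the integral defining $U(m)$ at $s=|m|^2/4$, which is exactly your split at $S=1$, and on $\{S\ge 1\}$ both of you feed Lemma~\ref{lem:hlimit} into what amounts to the moment computation $\bE[S^{-1/2}]=O(|m|^{-1})$; the tail $\{S<1\}$ is handled in the paper by the crude bound $|1/H_t-1|\le C_1 t^{-1/2}e^{C_2/t}$ from the same lemma (requiring $|m|^2\ge 2C_2$), while you use your sharper averaged estimate --- either works here.

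Part (ii) is where the two proofs genuinely diverge. The paper does not attempt a uniform estimate: it gets $|U(m)+1|\le C/|m|$ from (i) for large $|m|$, and then disposes of the finitely many remaining lattice points by continuity/positivity (the upper bound via $|\cK^{(jk)}(n)/K^{(jk)}_{\dis}(n)|$ on a finite set, the lower bound via strict positivity of the expectation). Your route is more quantitative: after passing to $\int_Q H_{t/2}(x-m/2)/H_t(x)\,dx$ you extract the exponent $(d-e(m))/(8t)$ rather than the generic $C_2/t$, and the arithmetic observation $|m|^2\ge d-e(m)$ (each odd coordinate contributes at least $1$) guarantees $c=\beta-(d-e(m))/8\ge \beta/2>0$ for every nonzero $m$, so the incomplete-gamma bound is uniform with no exceptional set. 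This buys you explicit constants and avoids the soft compactness step; the paper's version is shorter but less informative. Your lower bound via $\P(S\ge 1)=\gamma(\alpha,\beta)/\Gamma(\alpha)$ being increasing in $\beta\ge 1/4$ is likewise cleaner than the paper's ``finitely many points, each strictly positive.''
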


\begin{proof}
It follows from Theorem~\ref{prop:Kdecomp} that
\begin{align*}
    &|U(m)+1| \\
    &\le  \frac{1}{\pi^{d/2}\Gamma((d+2)/2)}\int_0^\infty\int_{\R^d} s^{\tfrac{d}{2}}e^{-s-|z|^2}\left|\frac{1}{H_{\frac{|m|^2}{4s}}\left(\tfrac{z|m|}{2\sqrt{s}}+\tfrac{m}{2}\right)}-1\right| \, dzds\\
    &=C_d \int_0^{\tfrac{|m|^2}{4}}\int_{\R^d} s^{\tfrac{d}{2}}e^{-s-|z|^2}\left|\frac{1}{H_{\frac{|m|^2}{4s}}\left(\tfrac{z|m|}{2\sqrt{s}}+\tfrac{m}{2}\right)}-1\right| \, dzds \\
    &\qquad\qquad+C_d \int_{\tfrac{|m|^2}{4}}^{\infty}\int_{\R^d} s^{\tfrac{d}{2}}e^{-s-|z|^2}\left|\frac{1}{H_{\frac{|m|^2}{4s}}\left(\tfrac{z|m|}{2\sqrt{s}}+\tfrac{m}{2}\right)}-1\right| \, dzds.
\end{align*}
By Lemma~\ref{lem:hlimit}, there exists a dimensional constant $C_d$  such that
\begin{align*}
    &|U(m)+1| \\
    &\le \frac{C_d}{|m|}\Bigg( \int_0^{\tfrac{|m|^2}{4}}\int_{\R^d} s^{\tfrac{d+1}{2}}e^{-s-|z|^2} \, dzds
    + \int_{\tfrac{|m|^2}{4}}^\infty\int_{\R^d} s^{\tfrac{d+1}{2}}e^{-s-|z|^2+\frac{C_2}{|m|^2}s} \, dzds\Bigg)\\
    &=\frac{C_d}{|m|}\left(\int_0^{\tfrac{|m|^2}{4}}s^{\tfrac{d+1}{2}}e^{-s} ds+\int_{\tfrac{|m|^2}{4}}^\infty s^{\tfrac{d+1}{2}}e^{-s+\frac{C_2}{|m|^2}s} ds\right).
\end{align*}
Note that
\[
    \int_0^{\tfrac{|m|^2}{4}}s^{\tfrac{d+1}{2}}e^{-s} ds
    \le \int_0^{\infty}s^{\tfrac{d+1}{2}}e^{-s} ds =\Gamma\left(\frac{d+3}{2}\right).
\]
Assuming $|m|^2\ge 2C_2$, we have 
\begin{align*}
    \int_{\tfrac{|m|^2}{4}}^\infty s^{\tfrac{d+1}{2}}e^{-s+\frac{C_2}{|m|^2}s}\, ds
    \le 
    C_d\int_{\tfrac{|m|^2}{4}}^\infty s^{\tfrac{d+1}{2}}e^{-\frac{s}{2}}\, ds
    \le C_d2^{\frac{d+3}{2}}\Gamma\left(\frac{d+3}{2}\right).
\end{align*}
Combining these, we obtain the bound in (i).

By (i), there exists $C_d$ such that $|U(m)+1|\le \tfrac14$ for $|m|\ge C_d$, which gives $\frac{1}{2}\leq |U(m)|$. On the other hand, 
\[ 
    |U(m)|=\bE\left[\frac{1}{H_S\left(\frac{B_S}{\sqrt{2}}+\frac{m}{2}\right)}\right] 
\]
is strictly positive for every $m$ and hence so is  $\min_{1\leq |m|\leq C_d}|U(m)|$, which gives the lower bound in (ii) for all $m$.

It follows from (i) that $|U(m)|\leq \frac{C_2}{|m|}+1$, for all $|m|>2C_1$. By~\eqref{EqProp:Kdecomp}, for $|m|\leq 2C_1$, we have
\[
    |U(m)|\le \max_{1\le |n|\le 2C_1}\left|\frac{\cK^{(jk)}(n)}{K^{(jk)}_{\dis}(n)}\right|<\infty. \qedhere
\] 
\end{proof}

\begin{remark}
Note that the results in Corollary~\ref{lem:Ulimit} remain valid if we replace $m\in\Z^d$ with $z\in\R^d$, $|z|>1$ because $\frac{\cK^{(jk)}(z)}{K^{(jk)}_{\dis}(z)}$ is well-defined and continuous.
\end{remark}

Motivated by Theorem \ref{prop:Kdecomp}, we set  
\begin{align}\label{convolution1}
\cJ^{(jk)}(m)=c_d \frac{m_j m_k}{|m|^{d+2}}(U(m)+1)\ind_{\Z^d\setminus\{0\}}(m)
\end{align} 
so that  by \eqref{disRiesz2}, 
\begin{align}\label{convolution2}
\cJ^{(jk)}(m)=K^{(jk)}_{\dis}(m)(U(m)+1), 
\end{align}
where $K^{(jk)}_{\dis}$ are the kernels for the discrete second-order Riesz transforms. 
With this, we see that  
\begin{align}\label{convolution2}
(\cR^{(jk)}+R_{\dis}^{(jk)})f(n)=\cJ^{(jk)}* f(n).  
\end{align} 

\begin{corollary}\label{KerJ} 
For all $j, k=1, 2, \dots, d$, we have
\begin{align}\label{L1KerJ}
    \|\cJ^{(jk)}\|_{\ell^1(\Z^d)}\leq C_d,
\end{align}
where $C_d$ dependents on $d$. 
\end{corollary}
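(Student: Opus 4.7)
The plan is to combine the pointwise bound on $\cJ^{(jk)}$ coming from~\eqref{convolution1} with the decay estimate $|U(m)+1|\le C_2/|m|$ for large $|m|$ provided by Corollary~\ref{lem:Ulimit}(i), and to control the remaining finitely many terms using the boundedness part (ii) of the same corollary.

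More precisely, let $C_1$ be the dimensional constant appearing in Corollary~\ref{lem:Ulimit}(i). I would split
\[
\|\cJ^{(jk)}\|_{\ell^1(\Z^d)}
= \sum_{0<|m|\le C_1}|\cJ^{(jk)}(m)| + \sum_{|m|>C_1}|\cJ^{(jk)}(m)|
\]
and estimate the two pieces separately. For the first piece, the sum ranges over finitely many lattice points whose cardinality depends only on $d$, and at each such point Corollary~\ref{lem:Ulimit}(ii) gives $|U(m)+1|\le C_4+2$, while $|K^{(jk)}_{\dis}(m)|\le c_d/|m|^d\le c_d$. Hence this contribution is bounded by a constant depending only on $d$.

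For the tail $|m|>C_1$, I would use the pointwise bound $|K^{(jk)}_{\dis}(m)|\le c_d/|m|^d$ together with Corollary~\ref{lem:Ulimit}(i) to obtain
\[
|\cJ^{(jk)}(m)| = |K^{(jk)}_{\dis}(m)|\,|U(m)+1| \le \frac{c_d}{|m|^d}\cdot\frac{C_2}{|m|} = \frac{C_d}{|m|^{d+1}}.
\]
Since $\sum_{m\in\Z^d\setminus\{0\}}|m|^{-(d+1)}$ converges to a dimensional constant (by the standard comparison with the integral $\int_{|x|\ge 1}|x|^{-(d+1)}\,dx$), this piece is likewise bounded by a constant depending only on $d$. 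Adding the two contributions yields~\eqref{L1KerJ}.

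There is no real obstacle here: the work has already been done in Corollary~\ref{lem:Ulimit}, which provides exactly the gain of one power of $|m|$ needed to convert the borderline $|m|^{-d}$ decay of the Calder\'on--Zygmund kernel into a summable tail. The only minor point to verify is that the constant in Corollary~\ref{lem:Ulimit}(ii) for the finitely many small $|m|$ is finite, which is immediate from the definition of $U$ (or from the fact that $\cK^{(jk)}(m)/K^{(jk)}_{\dis}(m)$ is well-defined and continuous away from the origin).
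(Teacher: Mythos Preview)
Your proposal is correct and follows essentially the same approach as the paper: both split the sum according to whether $|m|$ is small or large, invoke Corollary~\ref{lem:Ulimit}(ii) for the finitely many small terms and Corollary~\ref{lem:Ulimit}(i) to gain the extra factor $|m|^{-1}$ on the tail, yielding a summable $|m|^{-(d+1)}$ bound. The paper's proof is just a more compressed version of what you wrote.
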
 

\begin{proof} 
It follows from Corollary~\ref{lem:Ulimit} that   
\begin{align}\label{Referee1} 
    |\cJ^{(jk)}(m)| 
    = c_d \frac{|m_j m_k|}{|m|^{d+2}}|U(m)+1|
    \le \begin{cases} 
    \frac{C_d}{|m|^{d}}, \quad |m|\leq C_d,\\
    \frac{C_d}{|m|^{d+1}}, \quad |m|> C_d, 
\end{cases} 
m\ne 0.  
\end{align}
The conclusion follows. 
\end{proof}

\begin{corollary}\label{J-kern}
For all $j, k=1, 2, \dots, d$ with $j\ne k$, we have
\begin{align}
    \|2R_{\dis}^{(jk)}\|_{\ell^p\to\ell^p}&\leq (p^*-1)+\|2\cJ^{(jk)}\|_{\ell^1(\Z^d)},\\
    \|R_{\dis}^{(jj)}-R_{\dis}^{(kk)}\|_{\ell^p\to\ell^p}&\leq (p^*-1)+\|\cJ^{(jj)}-\cJ^{(kk)}\|_{\ell^1(\Z^d)},\\
    \|R_{\dis}^{(jj)}\|_{\ell^p\to\ell^p}&\leq \gamma(p) +  \|\cJ^{(jj)}\|_{\ell^1(\Z^d)}, 
\end{align} 
where $\gamma(p)$ is the Choi constant. 
\end{corollary}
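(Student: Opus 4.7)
The proof is an immediate application of three earlier ingredients: the kernel identity established in Theorem~\ref{prop:Kdecomp} and its repackaging as the convolution relation~\eqref{convolution2}, the sharp probabilistic bounds from Corollary~\ref{HalfSharp}, and Young's convolution inequality. The plan is to rewrite each discrete Calder\'on--Zygmund operator $R^{(jk)}_{\dis}$ as a sum of a probabilistic Riesz transform plus a convolution with $\cJ^{(jk)}$, and then estimate each piece by the corresponding inequality.

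First, from equation~\eqref{convolution2} I have
\[
    R_{\dis}^{(jk)} f(n) = \cJ^{(jk)} * f(n) - \cR^{(jk)} f(n),
\]
so by the triangle inequality on $\ell^p(\Z^d)$ and Young's convolution inequality $\|g * f\|_{\ell^p} \le \|g\|_{\ell^1}\|f\|_{\ell^p}$,
\[
    \|2R_{\dis}^{(jk)} f\|_{\ell^p} \le \|2\cJ^{(jk)}\|_{\ell^1}\|f\|_{\ell^p} + \|2\cR^{(jk)} f\|_{\ell^p}.
\]
For $j\neq k$, the first inequality in Corollary~\ref{HalfSharp} gives $\|2\cR^{(jk)} f\|_{\ell^p} \le (p^*-1)\|f\|_{\ell^p}$, producing the first claimed bound.

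For the second bound, I subtract the two instances $(\cR^{(jj)}+R_{\dis}^{(jj)})f = \cJ^{(jj)}*f$ and $(\cR^{(kk)}+R_{\dis}^{(kk)})f = \cJ^{(kk)}*f$ of~\eqref{convolution2} to obtain
\[
    (R_{\dis}^{(jj)}-R_{\dis}^{(kk)}) f = (\cJ^{(jj)}-\cJ^{(kk)}) * f - (\cR^{(jj)}-\cR^{(kk)}) f,
\]
then apply the triangle inequality, Young's inequality and the second estimate of Corollary~\ref{HalfSharp}, namely $\|\cR^{(jj)}-\cR^{(kk)}\|_{\ell^p\to\ell^p} \le (p^*-1)$. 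For the diagonal case, the single identity $R_{\dis}^{(jj)}f = \cJ^{(jj)}*f - \cR^{(jj)}f$ combined with $\|\cR^{(jj)}\|_{\ell^p\to\ell^p} \le \gamma(p)$ from Corollary~\ref{HalfSharp} yields the final estimate in the same way.

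There is no real obstacle here: each piece is already in place. The only point to verify carefully is that $\cJ^{(jk)}$, $\cJ^{(jj)}-\cJ^{(kk)}$, and $\cJ^{(jj)}$ all belong to $\ell^1(\Z^d)$ so that Young's inequality applies with a finite constant; this is guaranteed by Corollary~\ref{KerJ} (which bounds $\|\cJ^{(jk)}\|_{\ell^1(\Z^d)}$ by a dimensional constant for every $j,k$) together with the triangle inequality on $\ell^1(\Z^d)$ for the mixed diagonal term.
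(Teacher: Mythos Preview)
Your proof is correct and matches the paper's intended argument: the corollary is stated in the paper without an explicit proof, as it follows immediately from the convolution identity~\eqref{convolution2}, Young's inequality, Corollary~\ref{HalfSharp}, and Corollary~\ref{KerJ}, exactly as you have written out.
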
 

\section{Proof of Theorems~\ref{thm:main} and~\ref{thm:diag}}\label{sec:proof}

The proof of Theorem~\ref{thm:main} follows from the next two propositions,~\eqref{RieszSharp1_1},~\eqref{RieszSharp1_2}, and Corollary~\ref{HalfSharp}.   The following proposition is proved in~\cite[Theorem 6.2]{BanKimKwa} for singular integrals with kernels of the form  $K(x)=\frac{\Omega(x)}{|x|^{d}}$ with $\Omega(x)$ odd on the sphere. Although  the function $\Omega$ defining the classical second-order Reisz transform $R^{(jk)}$ are not odd, the proof can be modified in the present case for $j\ne k$ using the fact that the dilated version of the discrete operators $R_{\dis}^{(jk)}$ and $R_{\dis}^{(jj)}-R_{\dis}^{(kk)}$ approximate the classical second-order Riesz transforms $R^{(jk)}$ and $R^{(jj)}-R^{(kk)}$, respectively. Our argument to justify this approximation relies on the fact that the kernel $K_{\dis}^{(jk)}$ in the $j$-th variable is odd when we fix the other variables. However, if $j=k$, then $K_{\dis}^{(jj)}$ is symmetric in the $j$-th variable, so that our proof does not provide the inequality for $R^{(jj)}$. 

Note that, from~\eqref{FullSharp},~\eqref{FullSharp2} and Corollary~\ref{HalfSharp}, we already have for all $j\neq k$, 
\begin{align*}
    \|2\cR^{(jk)}\|_{\ell^p\to\ell^p}\leq (p^*-1) &= \|2R^{(jk)}\|_{L^p\to L^p},\\
    \|\cR^{(jj)}-\cR^{(kk)}\|_{\ell^p\to\ell^p}\leq (p^*-1)&= \|R^{(jj)}-R^{(kk)}\|_{L^p}. 
\end{align*}

\begin{proposition}\label{discont1}
For $j,k=1,2,\cdots,d$,   $j\neq k$, we have 
\begin{align}
    \|R^{(jk)}\|_{L^p\to L^p}
    &\le \|R_{\dis}^{(jk)}\|_{\ell^p\to\ell^p}\label{upper1},\\
    \|R^{(jj)}-R^{(kk)}\|_{L^p\to L^p}
    &\le \|R_{\dis}^{(jj)}-R_{\dis}^{(kk)}\|_{\ell^p\to\ell^p}\label{upper2}.
\end{align}
\end{proposition}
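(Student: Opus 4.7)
The strategy is a rescaling/transference argument showing that the discrete operator's $\ell^p$-norm dominates the continuous one by embedding the action of $R$ on smooth functions as a limit of discrete convolutions on the finer lattices $\epsilon\Z^d$. For both inequalities, let $K$ denote the relevant kernel: $K=K^{(jk)}$ in~\eqref{upper1} and $K=K^{(jj)}-K^{(kk)}$ in~\eqref{upper2}. In each case $K$ is homogeneous of degree $-d$ and vanishes in mean on every sphere centered at the origin, because $K(\omega)$ for $\omega\in S^{d-1}$ is the spherical harmonic $c_d\,\omega_j\omega_k$ or $c_d(\omega_j^2-\omega_k^2)$, both of which are zero-mean on $S^{d-1}$ once $j\neq k$. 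Let $M$ denote the $\ell^p(\Z^d)$-norm of the corresponding discrete operator $T_{\dis}$ and let $R$ denote the corresponding continuous Riesz operator.

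For $\phi\in C_c^\infty(\R^d)$ and $\epsilon>0$, define
\[
    T_\epsilon\phi(x) := \sum_{n\in\Z^d\setminus\{0\}} K(n)\,\phi(x-\epsilon n),
\]
which converges absolutely as $\phi$ has compact support. Sampling gives a family of sequences $f_{x_0}(k):=\phi(x_0+\epsilon k)$ for $x_0\in[0,\epsilon)^d$, and a direct reindexing yields
\[
    T_{\dis}f_{x_0}(k_0) = T_\epsilon\phi(x_0+\epsilon k_0),\qquad k_0\in\Z^d.
\]
Partitioning $\R^d$ into translated cubes $x_0+\epsilon k_0+[0,\epsilon)^d$ and applying Fubini together with $\|T_{\dis}f_{x_0}\|_{\ell^p}\le M\|f_{x_0}\|_{\ell^p}$ gives
\[
    \|T_\epsilon\phi\|_{L^p(\R^d)}^p = \int_{[0,\epsilon)^d}\|T_{\dis}f_{x_0}\|_{\ell^p(\Z^d)}^p\,dx_0 \le M^p\int_{[0,\epsilon)^d}\|f_{x_0}\|_{\ell^p(\Z^d)}^p\,dx_0 = M^p\|\phi\|_{L^p(\R^d)}^p,
\]
where the final equality uses that the shifts $\{x_0+\epsilon k: x_0\in[0,\epsilon)^d,\,k\in\Z^d\}$ tile $\R^d$. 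This gives the uniform estimate $\|T_\epsilon\|_{L^p\to L^p}\le M$, independent of $\epsilon$.

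Next, using homogeneity $K(n)=\epsilon^d K(\epsilon n)$ we rewrite
\[
    T_\epsilon\phi(x) = \epsilon^d\sum_{z\in\epsilon\Z^d\setminus\{0\}} K(z)\,\phi(x-z),
\]
which is precisely a Riemann-sum approximation, with the origin excised, to the principal-value integral $R\phi(x)=\mathrm{p.v.}\!\int K(z)\phi(x-z)\,dz$. For $\phi\in C_c^\infty(\R^d)$ I would show pointwise convergence $T_\epsilon\phi(x)\to R\phi(x)$ by splitting the sum into an inner region $|z|<\delta$ (where subtracting $\phi(x)$ and using $\int_{\{|z|<\delta\}}K(z)\,dz=0$ together with Taylor expansion of $\phi$ gives $O(\delta)$ uniformly in $\epsilon$), a bounded annulus $\delta\le|z|\le R$ (a standard Riemann sum for a bounded continuous integrand), and an outer tail (empty once $R$ exceeds the diameter of $\supp\phi$ shifted by $x$). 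Combining pointwise convergence with the uniform $L^p$ bound, Fatou's lemma gives $\|R\phi\|_{L^p}\le M\|\phi\|_{L^p}$ for all $\phi\in C_c^\infty$, and the inequality extends to $L^p(\R^d)$ by density.

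The main obstacle is the pointwise convergence step: the Riemann sum does not converge absolutely uniformly in $\epsilon$, so the identification of its limit with the principal value integral relies essentially on the cancellation of $K$ on spheres. The delicate part is the inner region $|z|<\delta$, where one must compare the discrete sum $\epsilon^d\sum_{0<|\epsilon n|<\delta}K(\epsilon n)$ with its continuous counterpart $\int_{|z|<\delta}K(z)\,dz=0$; controlling the discrepancy requires quantitative Riemann-sum estimates that use the smoothness of $K$ away from the origin combined with the zero-sphere-mean property. It is precisely this property that forces the hypothesis $j\neq k$ in both~\eqref{upper1} and~\eqref{upper2}, since only then is the restriction of $K$ to $S^{d-1}$ a spherical harmonic of degree two.
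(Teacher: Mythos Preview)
Your approach is essentially the paper's: define the rescaled operator $T_\epsilon$, bound $\|T_\epsilon\|_{L^p\to L^p}$ by the discrete norm (the paper cites this as the well-known identity $\|R_{\dis}\|_{\ell^p\to\ell^p}=\|\wt R_{\dis}\|_{L^p\to L^p}$, while you supply the tiling proof), establish pointwise convergence to $R\phi$ for $\phi\in C_c^\infty$, and invoke Fatou.

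The one place the paper is sharper is precisely where you flag a difficulty. Your concern about comparing $\epsilon^d\sum_{0<|\epsilon n|<\delta}K(\epsilon n)$ with $\int_{|z|<\delta}K(z)\,dz=0$ is misplaced: for $K=K^{(jk)}$ with $j\neq k$ the kernel is \emph{odd in $z_j$ and odd in $z_k$ separately}, so the discrete sum over any coordinate-symmetric set is exactly zero---no Riemann-sum estimate needed. The paper encodes this via a four-point symmetrization: using the sign changes $m_j\mapsto-m_j$ and $\hat m\mapsto-\hat m$ (where $\hat m$ denotes the remaining coordinates) one rewrites $\sum_m K(\epsilon m)F(x-\epsilon m)$ as $-\tfrac14\sum_m K(\epsilon m)$ times the double difference
\[
F(x_j+\epsilon m_j,\hat x+\epsilon\hat m)-F(x_j-\epsilon m_j,\hat x+\epsilon\hat m)-F(x_j+\epsilon m_j,\hat x-\epsilon\hat m)+F(x_j-\epsilon m_j,\hat x-\epsilon\hat m),
\]
which a second-order Taylor bound controls by $C\epsilon^2|m|^2$. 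The inner sum then becomes $\epsilon^d\sum_{0<|\epsilon m|\le r}|\epsilon m|^{2-d}=O(r^2)$ with no separate cancellation step. For~\eqref{upper2} the paper reduces to the off-diagonal case by the rotation $u_1=(m_1+m_2)/\sqrt2$, $u_2=(m_1-m_2)/\sqrt2$, which turns $m_1^2-m_2^2$ into $2u_1u_2$; this replaces your appeal to the zero-sphere-mean of $\omega_j^2-\omega_k^2$.
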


\begin{proof} 
Let $F$ be a smooth function in $L^p(\R^d)$ with compact support. For $\varep>0$, define the $p$-norm preserving dilation $\tau_\varep F(x): = \varep^{d/p}F(\varep x)$. That is,  $\|\tau_{\varep}F\|_p = \|F\|_p$. We also define the continuous-discrete Riesz transforms by

\begin{align}\label{ContDiscR}
    \wt{R}_{\dis}^{(jk)}(F)(x)=\sum_{m\in\Z^d}K^{(jk)}_{\dis}(m)F(x-m).
\end{align}
To simplify the notation, set  
\[
    R=R^{(jk)},\qquad  
    R_{\dis}=R_{\dis}^{(jk)}, \qquad
    \wt{R}_{\dis}=\wt{R}_{\dis}^{(jk)}.   
\]
It is well-known (see Remark \ref{Dis-ContNorms} below) that 
\begin{align}\label{Condis=dis}
    \|R_{\dis}\|_{\ell^p\to\ell^p}=\|\wt{R}_{\dis}\|_{L^p\to L^p}.
\end{align} 
We claim that 
\begin{align}
    \|R(F)\|_{L^p(\R^d)} \leq \|\wt{R}_{\dis}(F)\|_{L^p(\R^d)}.
\end{align} 
From \eqref{Condis=dis} and the claim, \eqref{discont1} follows.

For $x=(x_1,x_2,\cdots,x_d)\in\R^d$, we use the notations $\wh{x}=(x_1,\cdots,x_{j-1},x_{j+1},\cdots,x_d)$, $x = (x_j, \wh{x})$, and similarly for $m\in\Z^d$. Using the fact 
\begin{align}\label{eq:Kodd}
K^{(jk)}_{\dis}(m_j, \wh{m})=-K^{(jk)}_{\dis}(-m_j, \wh{m}),\qquad
K^{(jk)}_{\dis}(m_j, \wh{m})=-K^{(jk)}_{\dis}(m_j, -\wh{m})
\end{align}
for $m\in\Z^d\setminus\{0\}$, we can write
\begin{align}\label{eq:Riemannsum}   
    \wt{R}^\varep_{\dis}(F)(x)
    &:= \tau_{1/\varep}\wt{R}_{\dis}\tau_{\varep}(F)(x)\nonumber\\
    &= \sum_{m\in\Z^d} K^{(jk)}_{\dis}(m)F(x-\varep m)\nonumber\\
    &= \varep^{d}\sum_{m\in\Z^d} K^{(jk)}_{\dis}(\varep m)F(x-\varep m)\nonumber\\
    &= -\frac{\varep^{d}}{4}\sum_{m\in\Z^d} K^{(jk)}_{\dis}(\varep m)
    \{
    F(x_j+\varep m_j, \wh{x}+\varep \wh{m})
    -F(x_j-\varep m_j, \wh{x}+\varep \wh{m})\\
    &\qquad -F(x_j+\varep m_j, \wh{x}-\varep \wh{m})
    +F(x_j-\varep m_j, \wh{x}-\varep \wh{m})\}.\nonumber
\end{align}

Since $F$ is smooth with compact support, there exists $C>0$ such that
\begin{align}\label{eq:Festim}
    &|F(x_j+\varep m_j, \wh{x}+\varep \wh{m})
    -F(x_j-\varep m_j, \wh{x}+\varep \wh{m})
    -F(x_j+\varep m_j, \wh{x}-\varep \wh{m})
    +F(x_j-\varep m_j, \wh{x}-\varep \wh{m})|\\
    &\qquad\le 2\varep |m_j||\partial_j F(x_j-\varep m_j, \wh{x}+\varep \wh{m})-\partial_j F(x_j-\varep m_j, \wh{x}-\varep \wh{m})|+2\varep^2 |m_j|^2\max_x |\partial_{jj}F(x)|\nonumber\\
    &\qquad\le 4\varep^2 |m_j||\wh{m}|\max_x|\nabla_{\wh{x}}\partial_j F(x)|+C\varep^2 |m_j|^2 \nonumber\\
    &\qquad\le C \varep^2 |m|^2.\nonumber
\end{align}
With this estimate at hand, let $r>0$ and split $\wt{R}^\varep_{\dis}(F)(x)$ into the following two sums: 
\begin{align*}
    \wt{R}^\varep_{\dis}(F)(x) = \sum_{|\varep m|>r}K^{(jk)}_{\dis}(m)F(x-\varep m)+\sum_{|\varep m|\le r}K^{(jk)}_{\dis}(m)F(x-\varep m) =: I + II.
\end{align*}
Since $I$ is a Riemann sum, from~\eqref{eq:Riemannsum} we get
\begin{align*}
    \lim_{\varep\to 0}I 
    &= -\frac{c_d}{4}\int_{|y|>r} \frac{y_j y_k}{|y|^{d+2}} \{F(x_j+y_j, \wh{x}+\wh{y}) -(F(x_j-y_j, \wh{x}+\wh{y})\\
    &\qquad-F(x_j+y_j, \wh{x}-\wh{y}) +(F(x_j-y_j, \wh{x}-\wh{y})\}\, dy\\
    &= c_d\int_{|y|>r} \frac{y_j y_k}{|y|^{d+2}}F(x-y)\, dy.
\end{align*}
On the other hand, it follows from~\eqref{eq:Riemannsum},~\eqref{eq:Festim}, and the definition of $K^{(jk)}_{\dis}$ that
\begin{align*}
    |II| \le C \varep^d \sum_{\{|\varep m|\le r, m\neq 0\}}|\varep m|^{2-d} \le Cr^2.
\end{align*}
Letting $r\to 0 $, we have
\begin{align}\label{Rapproximation}
    \lim_{\varep\to 0}\wt{R}^\varep_{\dis}(F)(x) = R(F)(x).
\end{align}
Using~\eqref{Rapproximation} and Fatou's lemma, we conclude that 
\begin{align*}
   \|R(F)\|_{L^p(\R^d)} \leq \liminf_{\varep\to 0}\|\wt{R}^\varep_{\dis}(F)\|_{L^p(\R^d)}=
   \|\wt{R}_{\dis}(F)\|_{L^p(\R^d)},
\end{align*}
where the last equality comes from the fact that the dilation by $\varep$ preserves the $L^p$-norm.  This proves the claim and~\eqref{upper1} follows. 

For the inequality~\eqref{upper2}, we may assume without loss of generality that $j=1$ and $k=2$. By change of variables, the kernel for $R_{\dis}^{(11)}-R_{\dis}^{(22)}$ can be written as
\[
    K^{(11)}_{\dis}(m)-K^{(22)}_{\dis}(m)
    =\frac{c_d}{|m|^{d+2}}(m_1^2-m_2^2) 
    =2c_d \frac{u_1 u_2}{|u|^{d+2}} 
\]
where $u_1=\frac{1}{\sqrt{2}}({m_1+m_2})$, $u_2=\frac{1}{\sqrt{2}}({m_1-m_2})$, and $u_j=m_j$ for all $j\ge 3$. Thus, the previous argument leads to \eqref{upper2} and this concludes the proof of the Proposition.
\end{proof}

\begin{remark}\label{rmk:whyjnek}
Note that the proof of Proposition~\ref{discont1} uses the fact that the kernel $K^{(jk)}_{\dis}$ is odd in both $m_j$ and $\wh{m}$ for $m=(m_j,\wh{m})\in\Z^d\setminus\{0\}$ and $j\ne k$ (see~\eqref{eq:Kodd}). Since these properties  are not valid for the case $j=k$, a different method is needed for $R^{(jj)}_{\dis}$. 
\end{remark}

\begin{remark}\label{Dis-ContNorms}
A version of~\eqref{Condis=dis} for the discrete Hilbert transform goes back to Riesz~\cite{Riesz}, and it holds for quite general kernels. For this, we refer the reader to~\cite{Laeng, Pierce} and~\cite[Theorem 6.2]{BanKimKwa}.
\end{remark} 

The proof of Theorem~\ref{thm:main} is completed with the following 

\begin{proposition}\label{lem:S-pnorm}
For $j,k=1,2,\ldots,d$ with $j\neq k$,  
\begin{align}
    \|\cR^{(jk)}\|_{\ell^p\to\ell^p}&\geq \|R^{(jk)}\|_{L^p \to L^p}\label{SharpS1},\\
    \|\cR^{(jj)}-\cR^{(kk)}\|_{\ell^p\to\ell^p}&\geq\|R^{(jj)}-R^{(kk)}\|_{L^p \to L^p }\label{SharpS2}.
\end{align}
\end{proposition}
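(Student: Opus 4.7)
The plan is to mimic the dilation argument of Proposition~\ref{discont1}, applied to the operator $\cR^{(jk)}$ via its continuous analogue $\wt{\cR}^{(jk)}F(x):=\sum_{m\in\Z^d\setminus\{0\}}\cK^{(jk)}(m)F(x-m)$ on $L^p(\R^d)$. The transference argument underlying~\eqref{Condis=dis} (cf.~\cite[Theorem~6.2]{BanKimKwa}) applies verbatim and gives $\|\wt{\cR}^{(jk)}\|_{L^p\to L^p}=\|\cR^{(jk)}\|_{\ell^p\to \ell^p}$. Fixing $F\in C_c^\infty(\R^d)$, $\tau_\varep F(x)=\varep^{d/p}F(\varep x)$, and $\wt{\cR}^{(jk),\varep}F:=\tau_{1/\varep}\wt{\cR}^{(jk)}\tau_\varep F$, a direct computation together with the identity $\cK^{(jk)}(m)=-K^{(jk)}_{\dis}(m)+\cJ^{(jk)}(m)$ extracted from~\eqref{convolution1} yields
\[
\wt{\cR}^{(jk),\varep}F(x)=-\wt{R}^\varep_{\dis}F(x)+\sum_{m\neq 0}\cJ^{(jk)}(m)F(x-\varep m),
\]
where $\wt{R}^\varep_{\dis}$ is the dilated operator already analyzed in the proof of Proposition~\ref{discont1}.

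The next step is to let $\varep\to 0$ pointwise. Proposition~\ref{discont1} provides $\wt{R}^\varep_{\dis}F(x)\to R^{(jk)}F(x)$, and since $\cJ^{(jk)}\in\ell^1(\Z^d)$ by Corollary~\ref{KerJ} while $F$ is bounded and continuous, dominated convergence gives
\[
\sum_{m\neq 0}\cJ^{(jk)}(m)F(x-\varep m)\longrightarrow F(x)\sum_{m\neq 0}\cJ^{(jk)}(m).
\]
The crux is that this last sum vanishes. Section~\ref{ProbDisRiesz} established that $U(m)$ depends on $m$ only through $|m|$ and the number of even coordinates $e(m)$, both of which are invariant under any coordinate reflection. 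Hence $\cJ^{(jk)}(m)=c_d\frac{m_jm_k}{|m|^{d+2}}(U(m)+1)\ind_{m\neq 0}$ is antisymmetric under $m_j\mapsto -m_j$ when $j\neq k$, so $\sum_m\cJ^{(jk)}(m)=0$. Likewise, $\cJ^{(jj)}(m)-\cJ^{(kk)}(m)$ is antisymmetric under the swap $m_j\leftrightarrow m_k$ and also sums to zero. Thus $\wt{\cR}^{(jk),\varep}F(x)\to -R^{(jk)}F(x)$ pointwise.

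Fatou's lemma then yields $\|R^{(jk)}F\|_{L^p}^p\le \liminf_{\varep\to 0}\|\wt{\cR}^{(jk),\varep}F\|_{L^p}^p$, and because $\tau_\varep$ is an $L^p$-isometry together with the transference identity,
\[
\|\wt{\cR}^{(jk),\varep}F\|_{L^p}=\|\wt{\cR}^{(jk)}\tau_\varep F\|_{L^p}\le \|\cR^{(jk)}\|_{\ell^p\to\ell^p}\|F\|_{L^p}.
\]
Taking the supremum over $F\in C_c^\infty(\R^d)$ gives~\eqref{SharpS1}; the derivation of~\eqref{SharpS2} is identical after replacing $\cR^{(jk)}$ by $\cR^{(jj)}-\cR^{(kk)}$ and invoking the swap symmetry in place of the reflection symmetry.

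The main obstacle is the cancellation $\sum_m\cJ^{(jk)}(m)=0$: without it the $\varep\to 0$ limit would pick up a spurious multiple of $F(x)$ and the inequality~\eqref{SharpS1} would not follow from the dilation argument. Fortunately the structural dependence $U(m)=U(|m|,e(m))$ recorded in Section~\ref{ProbDisRiesz} makes this cancellation immediate by pairing $m$ with its coordinate-reflected image. The remaining ingredients---pointwise convergence of $\wt{R}^\varep_{\dis}F$ to $R^{(jk)}F$ and the transference identity for $\cR^{(jk)}$---are already available from Proposition~\ref{discont1} and~\eqref{Condis=dis}.
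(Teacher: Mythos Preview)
Your argument is correct and follows the same overall framework as the paper's proof: introduce the continuous analogue $\wt{\cR}^{(jk)}$, dilate, split via $\cK^{(jk)}=-K^{(jk)}_{\dis}+\cJ^{(jk)}$, use Proposition~\ref{discont1} for the $R_{\dis}$ piece, show the $\cJ$ piece disappears, and finish with Fatou and the transference identity~\eqref{Condis=dis}.

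The one place you genuinely diverge from the paper is in the treatment of the error term $\wt{\cJ}^\varep F(x)=\sum_{m\neq 0}\cJ^{(jk)}(m)F(x-\varep m)$. The paper re-applies the four-term symmetrization from Proposition~\ref{discont1} to $\cJ^{(jk)}$ and combines the second-order Taylor bound with the quantitative decay $|U(m)+1|\le C/|m|$ from Corollary~\ref{lem:Ulimit} to force this sum to $0$. You instead invoke only $\cJ^{(jk)}\in\ell^1(\Z^d)$, pass to the limit by dominated convergence, and then kill the constant $F(x)\sum_m\cJ^{(jk)}(m)$ by the odd parity of $\cJ^{(jk)}$ under $m_j\mapsto -m_j$ (respectively the swap $m_j\leftrightarrow m_k$ for~\eqref{SharpS2}). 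Both routes rely on the structural fact that $U(m)$ depends only on $|m|$ and $e(m)$: the paper needs it so that the four-term symmetrization is legitimate for $\cJ^{(jk)}$, while you need it so that the coordinate reflection/swap leaves $U(m)+1$ fixed. Your version is the more elementary of the two, as it bypasses the $\delta$--$r$ truncation and the second-order Taylor estimate for the $\cJ$ piece entirely; the paper's version, on the other hand, gives a slightly more quantitative vanishing (a rate in $\varep$ and $\delta$) that is not actually needed for the Fatou step.
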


\begin{proof} 
As in Proposition \ref{discont1}, for a smooth function $F\in L^p(\R^d)$ with compact support, we define the continuous-discrete operator by 
\begin{align}\label{ContDiscR}
    \wt{\cR}^{(jk)}(F)(x)=\sum_{m\in\Z^d}\cK^{(jk)}(m)F(x-m)
\end{align}
and the dilated operator 
\[
(\wt{\cR}^{(jk)})^{\varep}(F)(x) := \tau_{1/\varep}\wt{\cR}^{(jk)}\tau_{\varep}(F)(x).
\]
We claim that 
\begin{align}
    \lim_{\varep\to 0}(\wt{\cR}^{(jk)})^{\epsilon}(F)(x) &= R^{(jk)}(F)(x)\label{claim1},\\
    \lim_{\varep\to 0}\left((\wt{\cR}^{(jj)})^{\epsilon} (F)(x) -(\wt{\cR}^{(kk)})^{\epsilon} (F)(x)\right)&= R^{(jj)}(F)(x)-R^{(kk)}(F)(x).\label{claim2}
\end{align}
As before, the inequalities~\eqref{SharpS1} and~\eqref{SharpS2} follow from~\eqref{claim1},~\eqref{claim2} and Fatou's lemma as in the proof of Proposition~\ref{discont1}.

We will first prove~\eqref{claim1}. Using the change of variables, one can reduce the second assertion to~\eqref{claim1}. To simplify notation again, we fix $j,k$ and drop the superscript ${(jk)}$.  Let $x\in\R^d$ be fixed. By~\eqref{convolution2},  $\cR=-R_{\dis}+\cJ$ and by Proposition~\ref{discont1}, $\wt{R}_{\dis}^{\varep}(F)\to R(F)$ as $\varep\to 0$. Thus, we need to show that the limit of
\begin{align*}
\wt{\cJ}^\varep (F)(x)
&=\varep^{-d/p}\sum_{m\ne 0}K_{\dis}(m)(U(m)+1)(\tau_\varep F)(x/\varep- m)\\
&=\sum_{|m|\ge 1}K_{\dis}(m)(U(m)+1)F(x-\varep m)\\
&=\sum_{|m|\ge 1}K_{\dis}(m)(U(m)+1)F(x+\varep m)
\end{align*}
is equal to 0, as $\varep\to 0$. Here, we used the fact that $K_{\dis}(m)=K_{\dis}(-m)$ and $U(m)=U(-m)$, by Corollary \ref{InverseGammaU}.
Since $F$ has compact support, there exists $R>0$ such that $F(x+y)=0$ for $|y|\ge R$.  Let $\delta>0$. By Corollary~\ref{lem:Ulimit}, there exists $r>1$ such that $|U(m)+1|<\delta$ for $|m|\ge r$.  Choose $\varep>0$ small enough that $r<R/\varep$. Since $F(x+\varep m)=0$ for $|m|\ge R/\varep$, we have
\begin{align*}
    |\wt{\cJ}^\varep (F)(x)|
    &\le \bigg|\sum_{1\le|m|<R/\varep}K_{\dis}(m)(U(m)+1)F(x+\varep m)\bigg|\\
    &\le  \bigg| \sum_{r\le |m|<R/\varep}K_{\dis}(m)(U(m)+1)F(x+\varep m)\bigg|\\
    &\qquad\qquad+\bigg|\sum_{1\le |m|<r}K_{\dis}(m)(U(m)+1)F(x+\varep m)\bigg|.
\end{align*}
Using
\begin{align*}
    &|F(x_i+\varep m_i, \wh{x}+\varep \wh{m})
    -F(x_i-\varep m_i, \wh{x}+\varep \wh{m})\\
    &\qquad\qquad-F(x_i+\varep m_i, \wh{x}-\varep \wh{m})
    +F(x_i-\varep m_i, \wh{x}-\varep \wh{m})|\\
    &\qquad\le C\varep^2 |m|^2,
\end{align*}
as in the proof of Proposition~\ref{discont1} (see~\eqref{eq:Kodd}), we get 
\begin{align*}
    |\wt{\cJ}^\varep (F)(x)|
    &\le  C\varep^2\left(\sum_{r\le |m|<R/\varep}|K_{\dis}(m)||U(m)+1||m|^2+\sum_{1\le |m|<r}|K_{\dis}(m)||U(m)+1||m|^2\right).
\end{align*}
Since $|K_{\dis}(m)|\le C_d$ for $1\le |m|<r$ and $|K_{\dis}(m)|\le C_d|m|^{-d}$ for $r\le |m|<R/\varep$,
\begin{align*}
    |\wt{\cJ}^\varep (F)(x)|
    &\le  C\varep^2\left(\sum_{r\le |m|<R/\varep}|U(m)+1||m|^{2-d}+\sum_{1\le |m|<r}|U(m)+1||m|^2\right).
\end{align*}
It then follows from $|U(m)+1|<\delta$ for $r\le |m|<R/\varep$ and $|U(m)+1|\le |U(m)|+1\le C_d$ for $1\le |m|<r$ that
\begin{align*}
    |\wt{\cJ}^\varep (F)(x)|
    &\le  C\varep^2\left(\delta\sum_{r\le |m|<R/\varep}|m|^{2-d}+\sum_{1\le |m|<r}|m|^2\right)\\
    &\le  C\varep^2\left(C_{R,d}\delta \varep^{-2}+C_{r,d}\right)
    = C(\delta + \varep^2).
\end{align*}
Letting $\varep\to 0$, we get $\lim_{\varep\to0}|\wt{\cJ}^\varep (F)(x)|\le C_{d,R}\delta$. Then, we let $\delta\to 0$ to conclude the proof.

For the second assertion~\eqref{claim2}, we use the change of variables $u_1=\frac{1}{\sqrt{2}}({m_1+m_2})$, $u_2=\frac{1}{\sqrt{2}}({m_1-m_2})$, and $u_j=m_j$ for all $j\ge 3$. Then, one can apply the first assertion to conclude the result. 
\end{proof}

\begin{proof}[Proof of Theorem~\ref{thm:diag}]
It follows from Theorem~\ref{prop:Kdecomp} and Proposition~\ref{lem:Ulimit} that $ C_1 K^{(jj)}_{\dis}(m)\le K_{jj}(m) \le C_2 K^{(jj)}_{\dis}(m)$ for some $C_1>0$ and $C_2>1$. Since $K^{(jj)}_{\dis}(m)>0$ for all $m$, we conclude that $ C_3\|R^{(jj)}_{\dis}\|_{\ell^p\to\ell^p}\le \|\cR_{jj}\|_{\ell^p\to\ell^p}\le C_4\|R^{(jj)}_{\dis}\|_{\ell^p\to\ell^p}$.
\end{proof}

We end this section with some remarks concerning Fourier multipliers. Recall that for $f:\Z^d\to\R$, its Fourier transform is defined by 
\begin{align*}
    \cF(f)(\xi) = \sum_{n\in\Z^d}f(n)e^{-2\pi i n\cdot \xi},  \quad  \xi\in Q=[-\tfrac12, \tfrac12)^d,   
\end{align*}
with the inverse  Fourier transform of a periodic  function $f$ on $Q$ given by  
\[
    \cF^{-1}(f)(n)=\int_{Q}f(\xi)e^{2\pi i n\cdot \xi}\, d\xi.
\]
The following is a well-known result, see~\cite[Chapter VII]{SteWei} and~\cite[Proposition 3.2]{PieSte1}.    

\begin{proposition}
Suppose that $T$ is a convolution operator (translation invariant) of the form 
\[
    Tf(n)=\sum_{m\in \Z^d}f(m)K(m-n).
\]
Then $T$ is bounded on $\ell^2(\Z^d)$ if and only if there exists $m\in L^{\infty}(Q)$ such 
\[
    \cF(Tf)(\xi)=m(\xi)\cF(f)(\xi),\quad \xi\in Q.
\]
Furthermore, 
\[
    \|T\|_{\ell^2\to \ell^2}=\|m\|_{L^{\infty}}.
\]
\end{proposition}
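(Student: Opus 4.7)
The plan is to prove this by transferring the question to $L^2(Q)$ via the Fourier transform and using that $\cF$ is a unitary isomorphism between $\ell^2(\Z^d)$ and $L^2(Q)$ (Plancherel's theorem), together with the observation that convolution on $\Z^d$ becomes pointwise multiplication on the torus $Q$.

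For the easy direction ($\Leftarrow$): suppose $m \in L^\infty(Q)$ and $\cF(Tf)(\xi) = m(\xi)\cF(f)(\xi)$. Then by Plancherel,
\begin{equation*}
    \|Tf\|_{\ell^2} = \|\cF(Tf)\|_{L^2(Q)} = \|m \cdot \cF(f)\|_{L^2(Q)} \le \|m\|_{L^\infty}\|\cF(f)\|_{L^2(Q)} = \|m\|_{L^\infty}\|f\|_{\ell^2},
\end{equation*}
so $T$ is bounded on $\ell^2(\Z^d)$ with $\|T\|_{\ell^2\to\ell^2} \le \|m\|_{L^\infty}$. To see the opposite inequality, choose $g \in L^2(Q)$ approximately concentrated where $|m(\xi)|$ is close to $\|m\|_{L^\infty}$, i.e., let $g = \ind_{E}/|E|^{1/2}$ for $E \subset \{|m|>\|m\|_{L^\infty}-\eta\}$ with $0<|E|<\infty$, and take $f = \cF^{-1}(g) \in \ell^2(\Z^d)$. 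Then $\|Tf\|_{\ell^2} = \|m g\|_{L^2} \ge (\|m\|_{L^\infty}-\eta)\|f\|_{\ell^2}$. Letting $\eta\to 0$ gives $\|T\|_{\ell^2\to\ell^2} \ge \|m\|_{L^\infty}$.

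For the converse direction ($\Rightarrow$): assume $T$ is a bounded convolution operator. The key observation is that $T$ commutes with translations: if $\tau_h f(n) := f(n-h)$ for $h\in\Z^d$, then $T(\tau_h f) = \tau_h(Tf)$, because $T$ is defined via convolution with $K$. Under $\cF$, the translation $\tau_h$ becomes multiplication by the character $e_h(\xi) := e^{-2\pi i h\cdot\xi}$. Therefore the operator $\widetilde{T} := \cF T \cF^{-1}$ on $L^2(Q)$ commutes with multiplication by every character $e_h$, $h\in\Z^d$. Since finite linear combinations of characters are dense in $C(Q)$ (Stone-Weierstrass / density of trigonometric polynomials), $\widetilde{T}$ commutes with multiplication by every continuous function on $Q$, and then by density with multiplication by every $L^\infty$ function. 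A bounded operator on $L^2(Q)$ that commutes with all multiplications is itself a multiplication operator: applying $\widetilde{T}$ to $g \equiv 1$ defines $m := \widetilde{T}(1) \in L^2(Q)$, and one verifies that $\widetilde{T}g = mg$ for $g\in L^\infty(Q)\cap L^2(Q)$ by writing $\widetilde{T}(g\cdot 1) = g \widetilde{T}(1) = mg$.

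Finally, boundedness of $\widetilde{T}$ forces $m \in L^\infty(Q)$ with $\|m\|_{L^\infty} \le \|T\|_{\ell^2\to\ell^2}$: if $|m|$ exceeded $\|T\|_{\ell^2\to\ell^2}+\eta$ on a set $E$ of positive measure, then $g = \ind_E$ would witness $\|\widetilde{T}g\|_{L^2} > \|T\|_{\ell^2\to\ell^2}\|g\|_{L^2}$, a contradiction. Combined with the easy direction, this yields the equality $\|T\|_{\ell^2\to\ell^2} = \|m\|_{L^\infty}$. The only mild obstacle here is the step showing that commutation with all multiplication operators implies $\widetilde{T}$ itself is a multiplication operator; this uses that $\widetilde{T}(1)$ is well-defined as an element of $L^2(Q)$ since $Q$ has finite measure, and then a standard approximation argument with bounded functions extends the identity $\widetilde{T}g=mg$ from $L^\infty\cap L^2$ to all of $L^2(Q)$.
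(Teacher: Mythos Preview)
Your proof is correct and follows the standard textbook argument. Note, however, that the paper does not actually prove this proposition: it simply states it as a well-known result, citing \cite[Chapter VII]{SteWei} and \cite[Proposition 3.2]{PieSte1}. Your argument via Plancherel and the characterization of bounded operators commuting with all multiplications is precisely the approach one finds in those references, so there is nothing to compare.

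One small remark: in the $\Rightarrow$ direction you can shortcut the commutant argument by observing directly that $T\delta_0(n)=K(-n)$, so boundedness of $T$ on $\ell^2$ forces $K\in\ell^2(\Z^d)$; hence $m:=\cF(\check K)\in L^2(Q)$ is well-defined and the convolution identity $\cF(Tf)=m\cdot\cF(f)$ holds for finitely supported $f$ by direct computation. The $L^\infty$ bound on $m$ then follows from your contradiction argument with $g=\ind_E$. This avoids the density step passing from trigonometric polynomials to arbitrary $L^\infty$ multipliers, which in your write-up requires a dominated-convergence justification (e.g.\ via Fej\'er means) that you gestured at but did not spell out.
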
 

This proposition applies to the discrete Riesz $R^{(jk)}_{\dis}$ with convolution kernels $K^{(jk)}_{\dis}$ and to the probabilistic versions $\cR^{(jk)}$ with convolutions with the kernels $\cK^{(jk)}(m)$.  Thus for $j\ne k$, 
\begin{align}\label{Fourier1}
    \Big\|\sum_{m\in\Z^d}\cK^{(jk)}(m)e^{-2\pi i m\cdot \xi}\Big\|_{L^{\infty}}=  
    \Big\|\sum_{m\in\Z^d}c_d\frac{m_j m_k}{|m|^{d+2}}U(m)e^{-2\pi i m\cdot \xi}\Big\|_{L^{\infty}}=
    \frac{1}{2} 
\end{align} 
and for $j=k$, 
\begin{align}\label{Fourier2}
    \Big\|\sum_{m\in\Z^d}\cK^{(jj)}(m)e^{-2\pi i m\cdot \xi}\Big\|_{L^{\infty}}=    
    \Big\|\sum_{m\in\Z^d}c_d\frac{m_j^2}{|m|^{d+2}}U(m)e^{-2\pi i m\cdot \xi}\Big\|_{L^{\infty}}
    \leq 1,
\end{align} 
by the first equality on~\eqref{off-dia1} of Theorem~\ref{thm:main} and inequality in Theorem~\ref{thm:diag}, respectively. The Conjecture~\ref{BigConj} for $p=2$ ($j\neq k$) would follow by showing that removing the factor $U(m)$ does not affect the $L^{\infty}$-norms above. 
Since the function $U(m)$ converges rapidly to $-1$, it suffices to focus on the low-frequency regime where $|m|$ is small. In this regime, one can exploit the fact that $U(m)$ depends only on $|m|$ and $e(m)$, the number of even coordinates in $m \in \mathbb{Z}^d$. Combined with the antisymmetry of the kernel $m_j m_k |m|^{-d-2}$, this structure may allow one to show that the contribution from low-frequency terms is negligible. However, a rigorous justification of this approximation remains elusive.

\section{The discrete Beurling-Ahlfors transform}\label{sec:BA}

The Beurling-Ahlfors transform is the singular integral operator on $L^p(\bC)$ defined in the principal value sense by 
\begin{align*}
    Bf(z) = -\frac{1}{\pi}\int_{\bC}\frac{f(w)}{(z-w)^2} dw.
\end{align*}
Here we identify $\R^2$ with the complex plane $\bC$ and $dw$ is the Lebesgue measure. This operator has been extensively studied in the literature in large part due to its connections to other areas of analysis, such as regularity theory for quasiconformal mappings,  partial differential equations, and the well-known 1982 conjecture of T.~Iwaniec~\cite{Iwa82}. The latter asserts that the operator norm satisfies  $\|B\|_{p\to p}\leq (p^*-1)$. See for example,~\cite{Astala1, Ban, MR3018958, IwaMar, BanJan, Lehto, MR3558516, Volberg1, Dragi}, and the many references contained therein. Regarding Iwanienc's conjecture, the following estimate is known: 
\begin{align}\label{1.5bound}
    (p^*-1)\leq \|B\|_{p\to p}\leq 1.575(p^*-1). 
\end{align}
The lower bound was proved in~\cite{Lehto} and the upper bound in~\cite{BanJan}.  (We remind the reader that $1<p<\infty$ and $(p^*-1)$ is as in \eqref{BirkholdrConstant}.)

In~\cite[page 138]{CZ}, the discrete version of $B$ is discussed as a case of ``special interest'' where the authors say that it  ``can be considered as the simplest generalization of the Hilbert-Toeplitz linear form to space $E^2$'' ($\R^2$ in our case). 

Let $\Z_{\bC}=\{n+im:n,m\in\Z\}$ and $f:\Z_{\bC}\to\bC$. As in \eqref{DiscCalZyg}, define the discrete Beurling-Ahlfors transform $B_{\dis}$ on $\ell^p(\Z_{\bC})$ by
\begin{align*}
    B_{\dis}f(z) = -\frac{1}{\pi}\sum_{w\in{\Z_{\bC}\setminus\{0\}}}\frac{f(w)}{(z-w)^2},\quad z=n+im\in \Z_{\bC}.
\end{align*}
As before, by  H\"older's inequality for any $f\in \ell^p(\bC)$, $1<p<\infty$, the sum is absolutely convergent. 

If $z=x+iy$, then the kernel for the continuous Beurling-Ahlfors transform can be written as
\[
    K_B(z)=K_B(x,y)=\frac{-1}{\pi z^2}=\frac{1}{\pi}\frac{y^2-x^2+2ixy}{(x^2+y^2)^2}=\Omega(x,y)/(x^2+y^2)
\]
where
\[
    \Omega(x,y)=\frac{1}{\pi}\frac{y^2-x^2+2ixy}{x^2+y^2}.
\]
Thus, with our notation for classical second-order Riesz transforms on $\R^2$,
\[
    B= (R^{(22)}-R^{(11)})+2i R^{(21)}
\]
and 
\[
    B_{\dis}= (R_{\dis}^{(22)}-R_{\dis}^{(11)})+2i R_{\dis}^{(21)}.
\]

Note that $K_B$ is $C^1(\bC\setminus\{0\})$, $|K_B(z)|\le C_1|z|^{-2}$, and $|\nabla K_B(z)|\le C_1|z|^{-3}$. 
Thus, the kernel satisfies the conditions of Propositions 5.1 and 5.2 in~\cite{BanKimKwa}, which together with \eqref{1.5bound} give  
\begin{align}\label{FromCZ}
    (p^*-1)&\leq  \|B\|_{L^p\to L^p}\le \|B_{\dis}\|_{\ell^p\to \ell^p}\\
   & \le \|B\|_{L^p\to L^p}+C\leq 1.575 (p^*-1)+C, \nonumber 
\end{align} 
where $C$ is a universal constant independent of $p$. 
We define the  {\it  probabilistic discrete Beurling-Ahlfors}  transform  by 
\begin{align*}
    \cB
    =(\cR^{(22)}-\cR^{11})+2i\cR^{(21)}. 
\end{align*} 
By \eqref{Projmat} this is the projection $\cR_A$  of a martingale transform by the 
$2\times 2$ matrix 
\begin{align*}
    A=\begin{pmatrix}
        1 & -i \\
       -i & -1
    \end{pmatrix}.
\end{align*}
An easy calculation shows that the matrix $A$ has norm 2 when acting on vectors with complex coordinates and norm $\sqrt{2}$ when acting on vectors with real coordinates. From Theorem~\ref{limCor},  we have  
\begin{align}
    \|\cB f\|_{\ell^p(\Z_{\bC})}\leq 
    \begin{cases}
    2(p^*-1), & f: \Z_{\bC}\to\bC, \\
    \sqrt{2}(p^*-1), & f: \Z_{\bC} \to \bR.
    \end{cases}
\end{align}
Together with  $\lim_{\varep\to 0}\wt{\cB}^\varep (F)(x)=BF(x)$ and Fatou's Lemma as in the proof of Proposition~\ref{discont1}, we obtain 

\begin{theorem}\label{MainDisBA}
For all $1<p<\infty$, 
\begin{equation*} 
    (p^*-1)\leq \|B\|_{L^p\to L^p}\leq \|\cB\|_{\ell^p\to\ell^p}\leq 2(p^*-1). 
\end{equation*}
\end{theorem}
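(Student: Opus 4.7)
The plan is to establish the three inequalities separately. The leftmost bound $(p^*-1) \leq \|B\|_{L^p\to L^p}$ is the classical Lehto lower bound already recorded in~\eqref{1.5bound}, so it requires no new work. The two remaining inequalities use the two ingredients highlighted in the excerpt: Burkholder's martingale bound (for the upper bound on $\cB$) and a dilation--Fatou comparison argument (for passing from $\|\cB\|_{\ell^p\to\ell^p}$ to $\|B\|_{L^p\to L^p}$).

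For the upper bound $\|\cB\|_{\ell^p\to\ell^p}\le 2(p^*-1)$, I would invoke Theorem~\ref{limCor}(i) applied to $\cB=\cP_A$ with
\[
    A=\begin{pmatrix} 1 & -i \\ -i & -1 \end{pmatrix}.
\]
A direct computation (e.g.\ reading off the eigenvalues $\pm\sqrt{2}\,e^{i\pi/4}$ of $A^*A$, or simply noting that $A=I+\sigma$ with $\sigma$ a multiple of a unitary involution) gives $\|A\|_{\bC^2\to\bC^2}=2$. Since $\cB$ is complex-linear and $A$ acts on $\bC$-valued martingales, Theorem~\ref{limCor}(i) yields $\|\cB f\|_{\ell^p}\le \|A\|(p^*-1)\|f\|_{\ell^p}=2(p^*-1)\|f\|_{\ell^p}$ for $f:\Z_{\bC}\to\bC$.

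For the middle inequality $\|B\|_{L^p\to L^p}\le \|\cB\|_{\ell^p\to\ell^p}$, I would reuse the dilation scheme from Propositions~\ref{discont1} and~\ref{lem:S-pnorm}. Given a smooth, compactly supported $F:\bC\to\bC$, set $\tau_\varep F(x)=\varep^{2/p}F(\varep x)$ and define
\[
    \wt{\cB}^\varep F(x):=\tau_{1/\varep}\bigl(\wt{\cB}\,\tau_\varep F\bigr)(x),
\]
where $\wt{\cB}$ is the continuous extension of $\cB$ defined by convolution with the kernel $\cK^{(22)}-\cK^{(11)}+2i\cK^{(21)}$ exactly as in~\eqref{ContDiscR}. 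Because $\cB$ is a $\bC$-linear combination of the real second-order operators $\cR^{(jk)}$, the pointwise convergence
\[
    \lim_{\varep\to 0}\wt{\cB}^\varep F(x)
    = (R^{(22)}-R^{(11)})F(x)+2i R^{(21)}F(x)
    =BF(x)
\]
follows componentwise from the convergences $\wt{\cR^{(jk)}}^\varep F\to R^{(jk)}F$ already established in Proposition~\ref{lem:S-pnorm}. Dilation invariance of the $L^p$-norm gives $\|\wt{\cB}^\varep F\|_{L^p}=\|\wt{\cB}\,\tau_\varep F\|_{L^p}\le \|\cB\|_{\ell^p\to\ell^p}\|F\|_{L^p}$ (using~\eqref{Condis=dis} for $\cB$), and Fatou's lemma then yields $\|BF\|_{L^p}\le \|\cB\|_{\ell^p\to\ell^p}\|F\|_{L^p}$ on a dense class, which extends to all of $L^p(\bC)$ by approximation.

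The main obstacle, such as it is, is the verification of the componentwise convergence in the middle step, specifically showing that the tails $\sum_{|\varep m|\ge R}$ and the singular part $\sum_{|\varep m|<r}$ of $\wt{\cB}^\varep F$ vanish in the limit. This is handled exactly by the argument of Proposition~\ref{lem:S-pnorm}: the tails are controlled using the uniform bound $|U(m)|\le C_d$ from Corollary~\ref{lem:Ulimit} combined with the second-order cancellation $F(x+\varep m)-F(x-\varep m)-\cdots = O(\varep^2|m|^2)$ that exploits the antisymmetry of the symbol $m_jm_k/|m|^{d+2}$ after pairing $m$ with $-m$, and the singular part is handled by absolute summability of $|m|^{2-d}$ on $|m|<r$ (in dimension $d=2$ this gives a $\log$-type but still integrable contribution over a set shrinking with $\varep$). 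Nothing new is required beyond the real-valued argument, since taking real and imaginary parts reduces the claim to the case already treated.
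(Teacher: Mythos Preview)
Your approach is exactly the paper's: the lower bound is Lehto, the upper bound comes from Theorem~\ref{limCor}(i) with $\|A\|=2$, and the middle inequality follows from the dilation--Fatou argument of Proposition~\ref{lem:S-pnorm} applied componentwise to $\cB=(\cR^{(22)}-\cR^{(11)})+2i\cR^{(21)}$. Two small computational slips in your parentheticals are worth cleaning up, though they do not affect the argument: the eigenvalues of $A^*A=\begin{pmatrix}2&-2i\\2i&2\end{pmatrix}$ are $4$ and $0$ (not $\pm\sqrt{2}\,e^{i\pi/4}$, which are not even real), giving $\|A\|=2$ directly; and $A-I$ is not a multiple of a unitary involution. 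Also, for $d=2$ the sum $\sum_{|m|<R/\varep}\varep^2|m|^{2-d}$ is $\varep^2\cdot\#\{|m|<R/\varep\}\approx R^2$, bounded rather than $\log$-type, so the paper's estimate applies without modification.
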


\begin{remark}
For $2\leq p<\infty$, applying the conformal martingale inequalities in~\cite{BanJan}, see~\cite[pp 837-839]{Ban} and~\cite[\S 11]{Jana} in place of Burkholder's inequality in Theorems~\ref{thm:MartTrans},  gives 
\begin{align}\label{Conformal}
    \|\cB\|_{\ell^p\to\ell^p}\leq \sqrt{2p(p-1)}. 
\end{align} 
This is the better bound used in~\cite{BanJan} to obtain the $1.575(p^*-1)$ bound in~\eqref{1.5bound}. However, in addition to the bound $\sqrt{2p(p-1)}$, the proof in~\cite{BanJan} uses the fact that $\|B\|_{L^2\to L^2}=1$ and an interpolation argument. Unfortunately, here we do not yet have $\|\cB\|_{\ell^2\to\ell^2}=1$. 

Finally, as with the Riesz transforms, the operator $B_{\dis}$ differs from the probabilistic operator $\cB$  by a convolution with the $\ell^1$-function (with $\Z_{\bC}$ identified with $\Z^2$)
\begin{align}\label{RefereeBA} 
    \cJ_B(m) 
 &= \frac{1}{\pi}\left(\frac{m_2^2}{|m|^{4}}-\frac{m_1^2}{|m|^{4}}+2i\frac{m_2 m_1}{|m|^{4}}\right)\left(U(m)+1\right)\\
 &=\frac{1}{\pi}\left(\frac{(m_2+im_1)^2}{|m|^4}\right)\left(U(m)+1\right)\nonumber
\end{align}
which as in \eqref{convolution1} has the bound
\[
    |\cJ_B(m)| \le 
    \begin{cases} 
        \frac{C}{|m|^{2}}, \quad |m|\leq C,\\
        \frac{C}{|m|^{3}}, \quad |m|> C, 
    \end{cases}
\] 
for some constant $C$.
From this, it follows that $\|B_{\dis}\|_{\ell^p\to \ell^p}\leq 2(p^*-1)+C$, without appealing to Calder\'on-Zygmund theory. 
\end{remark}
  
As in the case of Conjecture \ref{BigConj} for Riesz transforms, we have the natural conjecture for the Beurling-Ahlfors operators. 
\begin{conjecture}
\begin{equation}
    \|B\|_{L^p\to L^p}= \|\cB\|_{\ell^p\to\ell^p}=\|B_{\dis}\|_{\ell^p\to\ell^p}=(p^*-1). 
\end{equation} 
\end{conjecture}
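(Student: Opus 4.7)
The plan is to adapt Theorems~\ref{thm:main} and~\ref{thm:diag} to the Beurling--Ahlfors setting, exploiting the decomposition $\cB = (\cR^{(22)}-\cR^{(11)})+2i\cR^{(21)}$ and the identity $B_{\dis} = -\cB + \cJ\ast$ with $\cJ \in \ell^1(\Z_{\bC})$ built from the $\cJ^{(jk)}$ of~\eqref{convolution1}. The lower bound $(p^*-1)\le\|B\|_{L^p\to L^p}$ is Lehto's, and the dilation-and-Fatou scheme of Propositions~\ref{discont1} and~\ref{lem:S-pnorm}, applied coordinate-wise to the three Riesz components, yields
\[
    (p^*-1)\le \|B\|_{L^p\to L^p}\le \|\cB\|_{\ell^p\to\ell^p},\qquad (p^*-1)\le \|B\|_{L^p\to L^p}\le \|B_{\dis}\|_{\ell^p\to\ell^p}.
\]
Everything therefore reduces to the matching upper bounds $\|\cB\|_{\ell^p\to\ell^p}\le(p^*-1)$ and $\|B_{\dis}\|_{\ell^p\to\ell^p}\le(p^*-1)$.

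For the first, the transforming matrix $A$ defining $\cB$ has operator norm $2$ on vectors with complex coordinates, so the direct appeal to Theorem~\ref{thm:MartTrans} produces only $2(p^*-1)$. To improve this one would need a sharp conformal martingale inequality: for the complex It\^o martingale $N_t = \int_0^t A\nabla u_f(Z_s)\cdot dB_s$, the estimate $\|N_T\|_p \le (p^*-1)\|M_T^f\|_p$. Granted any such inequality, the projection-kernel representation in Proposition~\ref{lim-TtoInf} together with Theorem~\ref{lem:lpbdd} would immediately upgrade the bound to $\|\cB\|_{\ell^p\to\ell^p}\le(p^*-1)$. For the second bound, the crude triangle estimate $\|B_{\dis}\|_{\ell^p\to\ell^p}\le\|\cB\|_{\ell^p\to\ell^p}+\|\cJ\|_{\ell^1}$ loses the sharp constant by $O(1)$; the plan is instead to combine the transference identity analogous to~\eqref{Condis=dis} with a refined version of the dilation argument in Proposition~\ref{discont1} to promote the Calder\'on--Zygmund discretization inequality~\eqref{RieszCZ} from $\|B_{\dis}\|_{\ell^p\to\ell^p}\le \|B\|_{L^p\to L^p}+C_d$ to an exact equality.

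The principal obstacle is unmistakable: the sharp conformal martingale inequality is the martingale form of Iwaniec's conjecture and has resisted resolution for over forty years. A secondary obstacle is the sharp discrete-to-continuum transference statement, which is itself part of Conjecture~\ref{BigConj} for second-order Riesz transforms. In consequence, the present conjecture is essentially the probabilistic-discrete packaging of Iwaniec's conjecture together with this transference, and a genuine proof would almost certainly require a new idea---conceivably one that exploits the rigidity of the kernel factor $U(m)$, which by Corollary~\ref{lem:Ulimit} depends only on $|m|$ and on $e(m)$ (the number of even coordinates of $m$) and satisfies $U(m)=-1+O(|m|^{-1})$ as $|m|\to\infty$.
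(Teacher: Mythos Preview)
The statement is a \emph{conjecture} in the paper, not a theorem; the paper offers no proof and explicitly presents it as open, parallel to Conjecture~\ref{BigConj}. Your proposal is therefore not being compared against a proof but against the paper's own discussion of what is known and what is missing. On that score your assessment is accurate and well aligned with the paper: the lower bounds and the inequalities $\|B\|_{L^p\to L^p}\le\|\cB\|_{\ell^p\to\ell^p}$ and $\|B\|_{L^p\to L^p}\le\|B_{\dis}\|_{\ell^p\to\ell^p}$ follow from Lehto and from the dilation/Fatou arguments of Propositions~\ref{discont1} and~\ref{lem:S-pnorm}, exactly as you say. You also correctly isolate the two genuine obstructions---the sharp conformal martingale bound (the martingale avatar of Iwaniec's conjecture) and the sharp discrete-to-continuum transference---and you are right that the first already gives only $2(p^*-1)$ via Theorem~\ref{thm:MartTrans} because $\|A\|=2$ over $\bC$.

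Two small corrections. First, the fact that $U(m)$ depends only on $|m|$ and $e(m)$ is the content of the unlabeled corollary immediately preceding Corollary~\ref{lem:Ulimit}, not of Corollary~\ref{lem:Ulimit} itself; the latter gives the asymptotic $U(m)=-1+O(|m|^{-1})$. Second, your sign convention $B_{\dis}=-\cB+\cJ\ast$ should be checked against~\eqref{convolution2}: the paper has $(\cR^{(jk)}+R_{\dis}^{(jk)})f=\cJ^{(jk)}\ast f$, so assembling the three components gives $B_{\dis}+\cB=\cJ\ast$ with $\cJ=(\cJ^{(22)}-\cJ^{(11)})+2i\cJ^{(21)}$, i.e.\ $B_{\dis}=-\cB+\cJ\ast$ is correct, but be careful that this is consistent with the minus signs in the definition of $\cR^{(jk)}$ via~\eqref{Projmat}. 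Neither point affects your overall conclusion that the conjecture is open and essentially packages Iwaniec's conjecture with sharp transference.
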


As in the case of the Riesz transforms $R^{(ik)}_{\dis}$, verifying that $\|B_{\dis}\|_{\ell^2\to\ell^2}=1$ would already be interesting. Again, perhaps calculations similar to those in \cite{Gra1} to compute the $\ell^2$-norm of $H_{\dis}$ would yield the result. 

\section{Continuous Calder\'on-Zygmund operators}\label{CZ-Prob}

In this section, we address the following natural 

\begin{question}\label{ConCZ} 
    Do the probabilistic second-order discrete Riesz transforms  $\cR^{(jk)}$ arise as discrete analogues of Calder\'on-Zygmund singular integrals in the sense of~\eqref{DiscCalZyg} and~\eqref{ConCalZyg1}? 
\end{question}

We give an affirmative answer to this question. In fact, the kernels for these operators are given by replacing the lattice point $m\in \Z^d$ in the discrete operators $\cR^{(jk)}$ constructed above by the continuous point $z\in \R^d$, for $|x|\geq 1$, and adding the kernel of the classical second-order Riesz transform for $|x|<1$. The choice for $|x|<1$ is made so that, as in the discrete case, the resulting probabilistic continuous operators differ from the classical Riesz transforms by a convolution with an $L^1(\R^d)$-function. More precisely, we have

\begin{theorem}\label{thm:contiCZ}
Define the kernels
\begin{align}\label{ContinousKernels} 
   & \mathbf{K}^{(jk)}(x) = \cK^{(jk)}(x)\ind_{\{|x|\ge 1\}} - K^{(jk)}(x)\ind_{\{|x|<1\}}\\
    &=-\left(\int_0^{\infty}\int_{\R^d}H_s(z)\frac{\partial}{\partial z_j} \left(\frac{P_s(z)}{H_s(z)}\right)\frac{\partial}{\partial z_k} \left(\frac{P_s(z-x)}{H_s(z)}\right)\,dz\,ds\right) \ind_{\{|x|\ge 1\}}\nonumber\\
    &-c_d \frac{x_j x_k}{|x|^{d+2}}\ind_{\{|x|<1\}}\nonumber
\end{align} 
These give Calder\'on-Zygmund operators $\mathbf{R}^{(jk)}:L^p(\R^d)\to L^p(\R^d)$ with discrete analogues $\cR^{(jk)}$. That is, 
\[
    \mathbf{R}^{(jk)}(f)(x) = \mathbf{K}^{(jk)}\ast f(x),
\]
where $\mathbf{K}^{(jk)}$ are Calder\'on-Zygmund kernels satisfying \eqref{ConCalZyg1} and 
\[
    \cR^{(jk)}(f)(n) = \sum_{m} \cK_{\dis}^{(jk)}(m)f(n-m),\quad 
    \cK_{\dis}^{(jk)}(m)=\mathbf{K}^{(jk)}(m)\ind_{\Z^d\setminus\{0\}}(m).
\]
\end{theorem}

We call the $\mathbf{R}^{(jk)}$ probabilistic continuous second-order  Riesz transforms. Before proving Theorem~\ref{thm:contiCZ}, we show that, as in the case of the discrete operators, even without verifying that they are Calder\'on-Zygmund kernels, these operators map $L^p(\R^d)$ to $L^p(\R^d)$ and are weak-type $(1, 1)$.  Indeed, with $U$ as in Theorem~\ref{prop:Kdecomp}, we have  
\begin{align}
    \mathbf{K}^{(jk)}(x) 
    &= \cK^{(jk)}(x)\ind_{\{|x|\ge 1\}} - K^{(jk)}(x)\ind_{\{|x|<1\}}\\
    &= -c_d \frac{x_j x_k}{|x|^{d+2}}+c_d \frac{x_j x_k}{|x|^{d+2}}(U(z)+1)\ind_{\{|x|\ge 1\}}\nonumber\\
    &= -K^{(jk)}(x) +\mathbf{J}^{(jk)}(x),\nonumber  
\end{align}
where  
\begin{equation}
    \mathbf{J}^{(jk)}(x)=c_d \frac{x_j x_k}{|x|^{d+2}}(U(x)+1)\ind_{\{|x|\ge 1\}}.
\end{equation} 
Hence, 
\[
    \mathbf{R}^{(jk)}(f)(x)=-R^{(jk)}f(x)+\mathbf{J}^{(jk)}\ast f(x). 
\]
Exactly as in Corollary~\ref{lem:Ulimit}, we have, with dimensional  constant,  
\begin{align*}
    |\mathbf{J}^{(jk)}(x)|
    \leq \frac{C_1}{|x|^{d+1}}\ind_{\{|x|\ge C_2\}} +\frac{C_3}{|x|^d}\ind_{\{1\le |x|< C_4\}}. 
\end{align*}
Thus,  for all $j, k$, $\|\mathbf{J}^{(jk)}\|_{L^{1}(\R^d)}\leq C_d$.  It follows that for $j\ne k$,
\begin{align}\label{Contbounds0}
    \|2\mathbf{R}^{(jk)}\|_{L^p\to L^p} \leq \|2R^{(jk)}\|_{L^p\to L^p}+ C_d
     = (p^*-1)+C_d,
\end{align}
and 
\begin{align}\label{Contbounds1}
|\mathbf{R}^{(jj)}-\mathbf{R}^{(kk)}\|_{L^p\to L^p}\leq \|R^{(jj)}-R^{(kk)}\|_{L^p\to L^p}+C_d=(p^*-1)+C_d. 
\end{align} 
For $j=k$, 
\begin{align}\label{Contbounds2}
\|\mathbf{R}^{(jj)}\|_{L^p\to L^p}\leq \|R^{(jj)}\|_{L^p\to L^p}+C_d=\gamma(p)+C_d. 
\end{align}

Similarly, since  $\|\mathbf{J}^{(jk)}\|_{L^{\infty}(\R^d)}\leq C_d$, where  $C_d$ depends only on $d$, the weak-type $(1, 1)$ inequality for the classical Riesz transforms $R^{(jk)}$ gives the same for the operators $\mathbf{R}^{(jk)}$. That is, denoting the Lebesgue measure of a subset $A$ in $\R^d$ by $|A|$ we have, 
\[
    |\{x\in \R^d: |\mathbf{R}^{(jk)}(f)(x)|>\lambda\}|\leq \frac{C_d}{\lambda}\int_{\R^d} |f(x)| dx,  \quad \lambda>0, \quad f\in L^1.      
\]

Here too verifying  upper bounds for the $p$-norm of the operators $\mathbf{R}^{(jk)}$  of the form $C(p^*-1)$,  with $C$ independent of $d$, would be of interest. As before, we have the more ambitious conjecture.

\begin{conjecture}\label{ProbContSharp}
For $j\neq k$,
\begin{align} 
    &\|2\mathbf{R}^{(jk)}\|_{L^p\to L^p} =(p^*-1)\label{ProbContSharp1}, \\
    &\|\mathbf{R}^{(jj)}-\mathbf{R}^{(kk)}\|_{L^p\to L^p}= (p^*-1)\label{ProbContSharp2}.
\end{align} 
When $j=k$, 
\begin{equation}\label{ProbContSharp3}
    \|\mathbf{R}^{(jj)}\|_{L^p\to L^p}=\gamma(p).
\end{equation} 
\end{conjecture}

\begin{proof}[Proof of Theorem~\ref{thm:contiCZ}] 
We remind the reader that in what follows, $C_d$ is a dimensional constant that may change from line to line. Since 
\[
    \mathbf{K}^{(jk)}(x) = -K^{(jk)}(x) +\mathbf{J}^{(jk)}(x),
\]
with  $|\wh{K^{(jk)}}(\xi)| =\frac{|\xi_j\xi_k|}{|\xi|^2}$ and $\mathbf{J}^{(jk)}\in L^1$, it follows $\wh{\mathbf{K}^{(jk)}}\in L^{\infty}$. On the other hand, since $|U(x)+1|$ is bounded for $|x|\ge 1$ by Corollary~\ref{lem:Ulimit}, we have $|\mathbf{K}^{(jk)}(x)|\le C_d|x|^{-d}$.  It suffices to show then that $|\nabla \mathbf{K}^{(jk)}(x)|\le C_d|x|^{-d-1}$. By the definition of $\mathbf{K}^{(jk)}$, we have
\begin{align*}
    \left|\nabla\mathbf{K}^{(jk)}(x)\right|
    &= \left|\nabla K^{(jk)}(x)(-1+(U(x)+1)\ind_{\{|x|\ge 1\}})+ K^{(jk)}(x) \nabla U(x)\ind_{\{|x|\ge 1\}}\right|\\
    &\le \frac{C_d}{|x|^{d+1}}+\frac{C_d}{|x|^{d}}\left|\nabla U(x)\ind_{\{|x|\ge 1\}}\right|. 
\end{align*}

We claim that $|\nabla U(x)|\le C_d|x|^{-1}$ for $|x|\ge 1$. Let $k,l\in\{1,2,\ldots,d\}$.  Note that
\[
    \frac{\partial}{\partial x_k}U(x)
    =\frac{1}{\pi^{d/2}\Gamma((d+2)/2)}\int_0^\infty\int_{\R^d} s^{\tfrac{d}{2}}e^{-s-|y|^2}
    \frac{\partial}{\partial x_k} \left(\frac{1}{H(\tfrac{y|x|}{2\sqrt{s}}+\tfrac{x}{2},\tfrac{|x|^2}{4s})}\right) 
    \, dyds
\]
and 
\[
    \frac{\partial}{\partial x_k} 
    \left(\frac{1}{H(\tfrac{y|x|}{2\sqrt{s}}+\tfrac{x}{2},\tfrac{|x|^2}{4s})}\right) 
    =
    \sum_{l=1}^d \frac{\partial_{x_l}H_t(w)}{H_t(w)^{2}} 
    \frac{\partial w_l}{\partial x_k} 
    + \frac{\partial_{t}H_t(w)}{H_t(w)^{2}} 
    \frac{\partial t}{\partial x_k}, 
\]
where $w=\tfrac{y|x|}{2\sqrt{s}}+\tfrac{x}{2}$ and $t=\tfrac{|x|^2}{4s}$. Using $|u|e^{-u^2}\le c e^{-\tfrac{u^2}{2}}$, we have
\begin{align*}
    \left|\partial_{x_l}H_t(x)\right|
    &= \left|\sum_{m\in\Z^d}P(x-m,t)\frac{(x_l-m_l) }{t}\right|\\
    &\leq (2\pi t)^{-\tfrac{d}{2}}\sum_{m\in\Z^d}e^{-\tfrac{|x-m|^2}{2t}}\frac{|x_l-m_l| }{t}\\
    &\le \frac{C_d}{\sqrt{t}}H_{2t}(x)
\end{align*}
and similarly $\left|\partial_{t}H_{t}(x)\right| \le \frac{C}{t}H_{2t}(x)$. Thus,
\begin{align*}
    \left|\frac{\partial}{\partial x_k} 
    \left(\frac{1}{H(\tfrac{y|x|}{2\sqrt{s}}+\tfrac{x}{2},\tfrac{|x|^2}{4s})}\right) \right|
    &\le
    \sum_{l=1}^d \frac{\left|\partial_{x_l}H_t(w)\right|}{H_t(w)^{2}} \left|\frac{y_l}{2\sqrt{s}}\frac{x_k}{|x|}\right|
    + \frac{\left|\partial_{t}H_t(w)\right|}{H_t(w)^{2}} \left|\frac{x_k}{2s}\right|\\
    &\le 
    \frac{C}{|x|}\frac{H_{2t}(w)}{H_t(w)^{2}}
    \left(\sum_{l=1}^d |y_l|+1\right).
\end{align*}
If $t= \tfrac{|x|^2}{4s}\ge 1$, then it follows from the proof of Lemma~\ref{lem:hlimit} that $ \frac{H_{2t}(x)}{H_t(x)^2}\le C $ for some $C>0$, for all $x$ and $t\ge 1$.  Thus,
\begin{align*}
   & \left|\int_{\R^d}\int_0^{|x|^2 /4} s^{\tfrac{d}{2}}e^{-s-|y|^2} \frac{\partial}{\partial x_k} \left(\frac{1}{H(\tfrac{y|x|}{2\sqrt{s}}+\tfrac{x}{2},\tfrac{|x|^2}{4s})}\right)  \, dsdy\right|\\
    &\le \frac{C}{|x|}  \int_{\R^d}\int_0^{|x|^2 /4} s^{\tfrac{d}{2}}e^{-s-|y|^2}  \left(\sum_{l=1}^d |y_l|+1\right) \, dsdy\\
  &=\frac{C_d}{|x|} \left(\sum_{l=1}^d \int_{\bR^d}e^{-|y|^2}(|y_l+1)dy\right)\left(\int_0^{\frac{|x|^2}{4}}s^{\frac{d}{2}} e^{-s} ds\right)\leq 
 \frac{C_d}{|x|}.
\end{align*}
If $t=\frac{|x|^2}{4s}< 1$, we have
\[
    \frac{H(x,2t)}{H(x,t)^2}
    \le  C_d t^{\frac{d}{2}} e^{\frac{C_1}{t}}
    =C_d |z|^d s^{-\frac{d}{2}} e^{C_1\frac{ s}{|x|^2}}, 
\]
by Lemma~\ref{lem:hlimit}. Thus, 
\begin{align*}
    &\left|\int_{\R^d}\int_{|x|^2 /4}^\infty s^{\tfrac{d}{2}}e^{-s-|y|^2} \frac{\partial}{\partial x_k} \left(\frac{1}{h(\tfrac{y|x|}{2\sqrt{s}}+\tfrac{x}{2},\tfrac{|x|^2}{4s})}\right)  \, dsdy\right|\\
    &\qquad\le C|x|^{d}  \int_{\R^d}\int_{|x|^2 /4}^\infty  
    \exp\left(-|y|^2-\left(1-\frac{C_1}{|x|^2}\right)s\right)
    \left(\sum_{l=1}^d |y_l|+1\right)
    \, dsdy\\
    &\qquad\le C |x|^{d}e^{-\frac{1}{4} |x|^2}, 
\end{align*}
for $|x|\ge 2\sqrt{C_1}$. If $1\le |x|\le 2\sqrt{C_1}$, $|\nabla U(x)|$ is bounded by continuity. Therefore, we conclude that $|\nabla U(x)|\le C|x|^{-1}$,  for $|x|\ge 1$,  as desired.
\end{proof}

\subsection*{Acknowledgments} 
We are grateful to the anonymous referee for the detailed report. Their careful reading, along with numerous corrections, comments, and suggestions, greatly improved the paper. We also thank Fabrice Baudoin for valuable discussions on heat kernel estimates.
This research was conducted while the second author was a postdoctoral researcher at the Georgia Institute of Technology. He thanks the School of Mathematics at Georgia Tech for its hospitality and support during this time.

\begin{bibdiv}
\begin{biblist}

\bib{Astala1}{book}{
      author={Astala, Kari},
      author={Iwaniec, Tadeusz},
      author={Martin, Gaven},
       title={Elliptic partial differential equations and quasiconformal
  mappings in the plane},
      series={Princeton Mathematical Series},
   publisher={Princeton University Press, Princeton, NJ},
        date={2009},
      volume={48},
        ISBN={978-0-691-13777-3},
      review={\MR{2472875}},
}

\bib{Ban}{article}{
      author={Ba\~{n}uelos, Rodrigo},
       title={The foundational inequalities of {D}. {L}. {B}urkholder and some
  of their ramifications},
        date={2010},
        ISSN={0019-2082},
     journal={Illinois J. Math.},
      volume={54},
      number={3},
       pages={789\ndash 868 (2012)},
         url={http://projecteuclid.org/euclid.ijm/1336049979},
      review={\MR{2928339}},
}

\bib{BanBau13}{article}{
      author={Ba\~{n}uelos, Rodrigo},
      author={Baudoin, Fabrice},
       title={Martingale transforms and their projection operators on
  manifolds},
        date={2013},
        ISSN={0926-2601,1572-929X},
     journal={Potential Anal.},
      volume={38},
      number={4},
       pages={1071\ndash 1089},
         url={https://doi.org/10.1007/s11118-012-9307-8},
      review={\MR{3042695}},
}

\bib{BanJan}{article}{
      author={Ba\~{n}uelos, Rodrigo},
      author={Janakiraman, Prabhu},
       title={{$L^p$}-bounds for the {B}eurling-{A}hlfors transform},
        date={2008},
        ISSN={0002-9947},
     journal={Trans. Amer. Math. Soc.},
      volume={360},
      number={7},
       pages={3603\ndash 3612},
  url={https://doi-org.ezproxy.lib.purdue.edu/10.1090/S0002-9947-08-04537-6},
      review={\MR{2386238}},
}

\bib{BanKwa}{article}{
      author={Ba\~{n}uelos, Rodrigo},
      author={Kwa\'{s}nicki, Mateusz},
       title={On the {$\ell^p$}-norm of the discrete {H}ilbert transform},
        date={2019},
        ISSN={0012-7094},
     journal={Duke Math. J.},
      volume={168},
      number={3},
       pages={471\ndash 504},
      review={\MR{3909902}},
}

\bib{BanKwa1}{article}{
      author={Ba\~{n}uelos, Rodrigo},
      author={Kwa\'{s}nicki, Mateusz},
       title={The {$\ell^p$} norm of the {R}iesz-{T}itchmarsh transform for
  even integer {$p$}},
        date={2024},
        ISSN={0024-6107,1469-7750},
     journal={J. Lond. Math. Soc. (2)},
      volume={109},
      number={4},
       pages={Paper No. e12888, 21},
         url={https://doi.org/10.1112/jlms.12888},
      review={\MR{4727420}},
}

\bib{BanMen}{article}{
      author={Ba\~{n}uelos, Rodrigo},
      author={M\'{e}ndez-Hern\'{a}ndez, P.~J.},
       title={Space-time {B}rownian motion and the {B}eurling-{A}hlfors
  transform},
        date={2003},
        ISSN={0022-2518},
     journal={Indiana Univ. Math. J.},
      volume={52},
      number={4},
       pages={981\ndash 990},
      review={\MR{2001941}},
}

\bib{MR3018958}{article}{
      author={Ba\~nuelos, Rodrigo},
      author={Os\c~ekowski, Adam},
       title={Sharp inequalities for the {B}eurling-{A}hlfors transform on
  radial functions},
        date={2013},
        ISSN={0012-7094,1547-7398},
     journal={Duke Math. J.},
      volume={162},
      number={2},
       pages={417\ndash 434},
         url={https://doi.org/10.1215/00127094-1962649},
      review={\MR{3018958}},
}

\bib{BanOse}{article}{
      author={Ba\~{n}uelos, Rodrigo},
      author={Os\c{e}kowski, Adam},
       title={Martingales and sharp bounds for {F}ourier multipliers},
        date={2012},
        ISSN={1239-629X},
     journal={Ann. Acad. Sci. Fenn. Math.},
      volume={37},
      number={1},
       pages={251\ndash 263},
         url={https://doi-org.ezproxy.lib.purdue.edu/10.5186/aasfm.2012.3710},
      review={\MR{2920438}},
}

\bib{BanOsc15}{article}{
      author={Ba\~{n}uelos, Rodrigo},
      author={Os\c{e}kowski, Adam},
       title={Sharp martingale inequalities and applications to {R}iesz
  transforms on manifolds, {L}ie groups and {G}auss space},
        date={2015},
        ISSN={0022-1236,1096-0783},
     journal={J. Funct. Anal.},
      volume={269},
      number={6},
       pages={1652\ndash 1713},
         url={https://doi.org/10.1016/j.jfa.2015.06.015},
      review={\MR{3373431}},
}

\bib{BanKimKwa}{article}{
      author={Bañuelos, Rodrigo},
      author={Kim, Daesung},
      author={Kwaśnicki, Mateusz},
       title={Sharp $\ell^p$ inequalities for discrete singular integrals on
  the lattice $\mathbb{Z}^d$},
        date={2026},
     journal={To appear in J. Funct. Anal.},
}

\bib{BurSal}{article}{
      author={Burdzy, Krzysztof},
      author={Salisbury, Thomas~S.},
       title={On minimal parabolic functions and time-homogeneous parabolic
  {$h$}-transforms},
        date={1999},
        ISSN={0002-9947,1088-6850},
     journal={Trans. Amer. Math. Soc.},
      volume={351},
      number={9},
       pages={3499\ndash 3531},
         url={https://doi.org/10.1090/S0002-9947-99-02471-X},
      review={\MR{1661309}},
}

\bib{Bur84}{article}{
      author={Burkholder, D.~L.},
       title={Boundary value problems and sharp inequalities for martingale
  transforms},
        date={1984},
        ISSN={0091-1798},
     journal={Ann. Probab.},
      volume={12},
      number={3},
       pages={647\ndash 702},
         url={https://mathscinet.ams.org/mathscinet-getitem?mr=744226},
      review={\MR{744226}},
}

\bib{CZ}{article}{
      author={Calderon, A.~P.},
      author={Zygmund, A.},
       title={On the existence of certain singular integrals},
        date={1952},
        ISSN={0001-5962},
     journal={Acta Math.},
      volume={88},
       pages={85\ndash 139},
      review={\MR{52553}},
}

\bib{Car1982}{article}{
      author={Cartwright, M.~L.},
       title={Manuscripts of {H}ardy, {L}ittlewood, {M}arcel {R}iesz and
  {T}itchmarsh},
        date={1982},
        ISSN={0024-6093},
     journal={Bull. London Math. Soc.},
      volume={14},
      number={6},
       pages={472\ndash 532},
      review={\MR{679927}},
}

\bib{Cho}{article}{
      author={Choi, K.~P.},
       title={A sharp inequality for martingale transforms and the
  unconditional basis constant of a monotone basis in {$L^p(0,1)$}},
        date={1992},
        ISSN={0002-9947},
     journal={Trans. Amer. Math. Soc.},
      volume={330},
      number={2},
       pages={509\ndash 529},
         url={https://doi-org.ezproxy.lib.purdue.edu/10.2307/2153920},
      review={\MR{1034661}},
}

\bib{Tor}{article}{
      author={Ciaurri, \'{O}scar},
      author={Gillespie, T.~Alastair},
      author={Roncal, Luz},
      author={Torrea, Jos\'{e}~L.},
      author={Varona, Juan~Luis},
       title={Harmonic analysis associated with a discrete {L}aplacian},
        date={2017},
        ISSN={0021-7670},
     journal={J. Anal. Math.},
      volume={132},
       pages={109\ndash 131},
         url={https://doi.org/10.1007/s11854-017-0015-6},
      review={\MR{3666807}},
}

\bib{deLeeuw}{article}{
      author={de~Leeuw, Karel},
       title={On {$L\sb{p}$} multipliers},
        date={1965},
        ISSN={0003-486X},
     journal={Ann. of Math. (2)},
      volume={81},
       pages={364\ndash 379},
         url={https://doi.org/10.2307/1970621},
      review={\MR{174937}},
}

\bib{DomOscPet}{incollection}{
      author={Domelevo, K.},
      author={Os\c{e}kowski, A.},
      author={Petermichl, S.},
       title={Various sharp estimates for semi-discrete {R}iesz transforms of
  the second order},
        date={2018},
   booktitle={50 years with {H}ardy spaces},
      series={Oper. Theory Adv. Appl.},
      volume={261},
   publisher={Birkh\"{a}user/Springer, Cham},
       pages={229\ndash 255},
      review={\MR{3792098}},
}

\bib{Pet3}{article}{
      author={Domelevo, Komla},
      author={Petermichl, Stefanie},
       title={Sharp {$L^p$} estimates for discrete second order {R}iesz
  transforms},
        date={2014},
        ISSN={0001-8708},
     journal={Adv. Math.},
      volume={262},
       pages={932\ndash 952},
         url={https://doi.org/10.1016/j.aim.2014.06.003},
      review={\MR{3228446}},
}

\bib{DomPetSkr}{article}{
      author={Domelevo, Komla},
      author={Petermichl, Stefanie},
      author={\v{S}kreb, Kristina~Ana},
       title={Continuous sparse domination and dimensionless weighted estimates
  for the {B}akry-{R}iesz vector},
        date={2025},
        ISSN={0075-4102,1435-5345},
     journal={J. Reine Angew. Math.},
      volume={824},
       pages={137\ndash 166},
         url={https://doi.org/10.1515/crelle-2025-0024},
      review={\MR{4926944}},
}

\bib{Doob}{book}{
      author={Doob, Joseph~L.},
       title={Classical potential theory and its probabilistic counterpart},
      series={Classics in Mathematics},
   publisher={Springer-Verlag, Berlin},
        date={2001},
        ISBN={3-540-41206-9},
         url={https://doi.org/10.1007/978-3-642-56573-1},
        note={Reprint of the 1984 edition},
      review={\MR{1814344}},
}

\bib{Dragi}{article}{
      author={Dragicevi\'c, Oliver},
      author={Volberg, Alexander},
       title={Bellman function, {L}ittlewood-{P}aley estimates and asymptotics
  for the {A}hlfors-{B}eurling operator in {$L^p(\Bbb C)$}},
        date={2005},
        ISSN={0022-2518,1943-5258},
     journal={Indiana Univ. Math. J.},
      volume={54},
      number={4},
       pages={971\ndash 995},
         url={https://doi.org/10.1512/iumj.2005.54.2554},
      review={\MR{2164413}},
}

\bib{Durrett}{book}{
      author={Durrett, Richard},
       title={Brownian motion and martingales in analysis},
      series={Wadsworth Mathematics Series},
   publisher={Wadsworth International Group, Belmont, CA},
        date={1984},
        ISBN={0-534-03065-3},
      review={\MR{750829}},
}

\bib{Eva}{book}{
      author={Evans, L.~C.},
       title={Partial differential equations},
      series={Graduate Studies in Mathematics},
   publisher={AMS},
        date={2010},
      volume={19},
}

\bib{GesMonSak}{article}{
      author={Geiss, Stefan},
      author={Montgomery-Smith, Stephen},
      author={Saksman, Eero},
       title={On singular integral and martingale transforms},
        date={2010},
        ISSN={0002-9947},
     journal={Trans. Amer. Math. Soc.},
      volume={362},
      number={2},
       pages={553\ndash 575},
      review={\MR{2551497}},
}

\bib{Gra1}{article}{
      author={Grafakos, Loukas},
       title={An elementary proof of the square summability of the discrete
  {H}ilbert transform},
        date={1994},
        ISSN={0002-9890,1930-0972},
     journal={Amer. Math. Monthly},
      volume={101},
      number={5},
       pages={456\ndash 458},
         url={https://doi.org/10.2307/2974910},
      review={\MR{1272948}},
}

\bib{Graf}{book}{
      author={Grafakos, Loukas},
       title={Classical {F}ourier analysis},
     edition={Third},
      series={Graduate Texts in Mathematics},
   publisher={Springer, New York},
        date={2014},
      volume={249},
        ISBN={978-1-4939-1193-6; 978-1-4939-1194-3},
      review={\MR{3243734}},
}

\bib{GV79}{article}{
      author={Gundy, R.~F.},
      author={Varopoulos, N.~Th.},
       title={Les transformations de {R}iesz et les int{\'e}grales
  stochastiques},
        date={1979},
        ISSN={0151-0509},
     journal={C. R. Acad. Sci. Paris S{\'e}r. A-B},
      volume={289},
      number={1},
       pages={A13\ndash A16},
      review={\MR{545671}},
}

\bib{Iwa82}{article}{
      author={Iwaniec, T.},
       title={Extremal inequalities in {S}obolev spaces and quasiconformal
  mappings},
        date={1982},
        ISSN={0232-2064},
     journal={Z. Anal. Anwendungen},
      volume={1},
      number={6},
       pages={1\ndash 16},
         url={https://doi-org.ezproxy.lib.purdue.edu/10.4171/ZAA/37},
      review={\MR{719167}},
}

\bib{IwaMar}{article}{
      author={Iwaniec, T.},
      author={Martin, G.},
       title={Riesz transforms and related singular integrals},
        date={1996},
        ISSN={0075-4102},
     journal={J. Reine Angew. Math.},
      volume={473},
       pages={25\ndash 57},
      review={\MR{1390681}},
}

\bib{Jana}{article}{
      author={Janakiraman, Prabhu},
       title={Orthogonality in complex martingale spaces and connections with
  the {B}eurling-{A}hlfors transform},
        date={2010},
        ISSN={0019-2082,1945-6581},
     journal={Illinois J. Math.},
      volume={54},
      number={4},
       pages={1509\ndash 1563},
         url={http://projecteuclid.org/euclid.ijm/1348505539},
      review={\MR{2981858}},
}

\bib{Kra}{book}{
      author={Krause, Ben},
       title={Discrete analogues in harmonic analysis---{B}ourgain, {S}tein,
  and beyond},
      series={Graduate Studies in Mathematics},
   publisher={American Mathematical Society, Providence, RI},
        date={[2022] \copyright 2022},
      volume={224},
        ISBN={[9781470468576]; [9781470471743]; [9781470471750]},
         url={https://doi.org/10.1090/gsm/224},
        note={With a contribution by Mariusz Mirek},
      review={\MR{4512201}},
}

\bib{Laeng}{article}{
      author={Laeng, Enrico},
       title={Remarks on the {H}ilbert transform and on some families of
  multiplier operators related to it},
        date={2007},
        ISSN={0010-0757},
     journal={Collect. Math.},
      volume={58},
      number={1},
       pages={25\ndash 44},
      review={\MR{2310545}},
}

\bib{Lehto}{book}{
      author={Lehto, O.},
      author={Virtanen, K.~I.},
       title={Quasiconformal mappings in the plane},
     edition={Second},
      series={Die Grundlehren der mathematischen Wissenschaften},
   publisher={Springer-Verlag, New York-Heidelberg},
        date={1973},
      volume={Band 126},
        note={Translated from the German by K. W. Lucas},
      review={\MR{344463}},
}

\bib{Li08}{article}{
      author={Li, Xiang-Dong},
       title={Martingale transforms and {$L^p$}-norm estimates of {R}iesz
  transforms on complete {R}iemannian manifolds},
        date={2008},
        ISSN={0178-8051,1432-2064},
     journal={Probab. Theory Related Fields},
      volume={141},
      number={1-2},
       pages={247\ndash 281},
         url={https://doi.org/10.1007/s00440-007-0085-y},
      review={\MR{2372971}},
}

\bib{Lus-Piq4}{article}{
      author={Lust-Piquard, Fran\c{c}oise},
       title={Riesz transforms on generalized {H}eisenberg groups and {R}iesz
  transforms associated to the {CCR} heat flow},
        date={2004},
        ISSN={0214-1493},
     journal={Publ. Mat.},
      volume={48},
      number={2},
       pages={309\ndash 333},
         url={https://doi-org.ezproxy.lib.purdue.edu/10.5565/PUBLMAT_48204_02},
      review={\MR{2091008}},
}

\bib{MSW}{article}{
      author={Magyar, A.},
      author={Stein, E.~M.},
      author={Wainger, S.},
       title={Discrete analogues in harmonic analysis: spherical averages},
        date={2002},
        ISSN={0003-486X},
     journal={Ann. of Math. (2)},
      volume={155},
      number={1},
       pages={189\ndash 208},
      review={\MR{1888798}},
}

\bib{Mah}{article}{
      author={Maheux, Patrick},
       title={Notes on heat kernels on infinite dimensional torus},
     journal={\url{https://idpoisson.fr/maheux/InfiniteTorusV2.pdf}},
}

\bib{Ose2}{article}{
      author={Os\c{e}kowski, Adam},
       title={Inequalities for second-order {R}iesz transforms associated with
  {B}essel expansions},
        date={2020},
        ISSN={0239-7269},
     journal={Bull. Pol. Acad. Sci. Math.},
      volume={68},
      number={1},
       pages={75\ndash 88},
         url={https://doi-org.ezproxy.lib.purdue.edu/10.4064/ba200302-5-10},
      review={\MR{4165504}},
}

\bib{PieSte1}{article}{
      author={Pierce, Lillian},
      author={Stein, Elias},
       title={Discrete analogues in harmonic analysis},
  journal={\url{https://sites.math.rutgers.edu/~it200/courses/Princeton_StAG/WeekTwo.pdf}},
}

\bib{Pierce}{book}{
      author={Pierce, Lillian~B.},
       title={Discrete analogues in harmonic analysis},
   publisher={ProQuest LLC, Ann Arbor, MI},
        date={2009},
        ISBN={978-1109-13748-4},
        note={Thesis (Ph.D.)--Princeton University},
      review={\MR{2713096}},
}

\bib{Pinsky}{book}{
      author={Pinsky, Ross~G.},
       title={Positive harmonic functions and diffusion},
      series={Cambridge Studies in Advanced Mathematics},
   publisher={Cambridge University Press, Cambridge},
        date={1995},
      volume={45},
        ISBN={0-521-47014-5},
         url={https://doi.org/10.1017/CBO9780511526244},
      review={\MR{1326606}},
}

\bib{Riesz}{article}{
      author={Riesz, Marcel},
       title={Sur les fonctions conjugu\'{e}es},
        date={1928},
        ISSN={0025-5874},
     journal={Math. Z.},
      volume={27},
      number={1},
       pages={218\ndash 244},
      review={\MR{1544909}},
}

\bib{RogWil}{book}{
      author={Rogers, L. C.~G.},
      author={Williams, David},
       title={Diffusions, {M}arkov processes, and martingales. {V}ol. 1},
      series={Cambridge Mathematical Library},
   publisher={Cambridge University Press, Cambridge},
        date={2000},
        ISBN={0-521-77594-9},
         url={https://doi.org/10.1017/CBO9781107590120},
        note={Foundations, Reprint of the second (1994) edition},
      review={\MR{1796539}},
}

\bib{SteSha}{book}{
      author={Stein, Elias~M.},
      author={Shakarchi, Rami},
       title={Complex analysis},
      series={Princeton Lectures in Analysis},
   publisher={Princeton University Press, Princeton, NJ},
        date={2003},
      volume={2},
        ISBN={0-691-11385-8},
      review={\MR{1976398}},
}

\bib{SteWei}{book}{
      author={Stein, Elias~M.},
      author={Weiss, Guido},
       title={Introduction to {F}ourier analysis on {E}uclidean spaces},
      series={Princeton Mathematical Series},
   publisher={Princeton University Press, Princeton, NJ},
        date={1971},
      volume={No. 32},
      review={\MR{304972}},
}

\bib{MR3558516}{article}{
      author={Strzelecki, Micha\l},
       title={The {$L^p$}-norms of the {B}eurling-{A}hlfors transform on radial
  functions},
        date={2017},
        ISSN={1239-629X,1798-2383},
     journal={Ann. Acad. Sci. Fenn. Math.},
      volume={42},
      number={1},
       pages={73\ndash 93},
         url={https://doi.org/10.5186/aasfm.2017.4204},
      review={\MR{3558516}},
}

\bib{Titc26}{article}{
      author={Titchmarsh, E.~C.},
       title={Reciprocal formulae involving series and integrals},
        date={1926},
        ISSN={0025-5874},
     journal={Math. Z.},
      volume={25},
      number={1},
       pages={321\ndash 347},
      review={\MR{1544814}},
}

\bib{Titc27}{article}{
      author={Titchmarsh, E.~C.},
       title={Reciprocal formulae involving series and integrals (correction)},
        date={1927},
        ISSN={0025-5874},
     journal={Math. Z.},
      volume={26},
      number={1},
       pages={496},
      review={\MR{1544871}},
}

\bib{Volberg1}{book}{
      author={Vasyunin, Vasily},
      author={Volberg, Alexander},
       title={The {B}ellman function technique in harmonic analysis},
      series={Cambridge Studies in Advanced Mathematics},
   publisher={Cambridge University Press, Cambridge},
        date={2020},
      volume={186},
        ISBN={978-1-108-48689-7},
         url={https://doi.org/10.1017/9781108764469},
      review={\MR{4411371}},
}

\bib{NazVol}{article}{
      author={Volberg, A.},
      author={Nazarov, F.},
       title={Heat extension of the {B}eurling operator and estimates for its
  norm},
        date={2003},
        ISSN={0234-0852},
     journal={Algebra i Analiz},
      volume={15},
      number={4},
       pages={142\ndash 158},
         url={https://doi.org/10.1090/S1061-0022-04-00822-2},
      review={\MR{2068982}},
}

\bib{Oks}{book}{
      author={Øksendal, B.},
       title={Stochastic differential equations: An introduction},
     edition={6},
   publisher={Springer},
        date={2003},
}

\end{biblist}
\end{bibdiv}
\end{document}